\definecolor{dblue}{rgb}{0,0,.6}
\renewcommand*{\backrefalt}[4]{
	\ifcase #1 (Not cited)
	\or        (Cited on page~#2)
	\else      (Cited on pages~#2)
	\fi}
\newcommand{\6}{\partial}
\newcommand{\Q}{\mathbb{Q}}
\newcommand{\R}{\mathbb{R}}
\newcommand{\Z}{\mathbb{Z}}
\newcommand{\C}{\mathbb{C}}
\def\cal{\mathcal}
\newcommand{\arrow}{{\:\longrightarrow\:}}
\newcommand{\bbZ}{\mathbb{Z}}
\newcommand{\bbQ}{\mathbb{Q}}
\newcommand{\bbR}{\mathbb{R}}
\newcommand{\bbC}{\mathbb{C}}
\newcommand{\bbH}{\mathbb{H}}
\newcommand{\bbP}{\mathbb{P}}
\newcommand{\CC}{\mathcal{C}}
\newcommand{\DD}{\mathcal{D}}
\newcommand{\EE}{\mathcal{E}}
\newcommand{\FF}{\mathcal{F}}
\newcommand{\HH}{\mathcal{H}}
\newcommand{\II}{\mathcal{I}}
\newcommand{\KK}{\mathcal{K}}
\newcommand{\PP}{\mathcal{P}}
\newcommand{\VV}{\mathcal{V}}
\newcommand{\XX}{\mathcal{X}}
\newcommand{\frh}{\mathfrak{h}}
\newcommand{\frr}{\mathfrak{r}}
\newcommand{\frso}{\mathfrak{so}}
\newcommand{\SO}{\operatorname{SO}}
\newcommand{\Sp}{\operatorname{Sp}}
\newcommand{\rmO}{\operatorname{O}}
\renewcommand{\ge}{\geqslant}
\renewcommand{\le}{\leqslant}
\newcommand{\NS}{\mathrm{NS}}
\newcommand{\st}{\enskip |\enskip}
\newcommand{\sdot}{{\raisebox{0.16ex}{$\scriptscriptstyle\bullet$}}}
\newcommand{\ii}{\sqrt{-1}}
\newcommand{\wdg}{\wedge}
\newcommand{\Aut}{\mathrm{Aut}}
\newcommand{\Amp}{{\mathcal{A}\it {mp}}}
\newcommand{\Av}{\mathrm{Av}}
\newcommand{\Vol}{\mathrm{Vol}}
\newcommand{\VVol}{\mathcal{V}\mathrm{ol}}
\newcommand{\Comp}{\mathscr{I}}
\newcommand{\HK}{\mathscr{H\!K}}
\newcommand{\HKa}{\HK^+}
\newcommand{\HKb}{\mathscr{H\!K\!S}}
\newcommand{\Teich}{\mathcal{T}}
\newcommand{\TeichP}{\Teich_p}
\newcommand{\TeichHK}{\Teich_{h\!k}}
\newcommand{\TeichHKa}{\Teich_{h\!k^+}}
\newcommand{\TeichHKb}{\Teich_{h\!k\!s}}
\newcommand{\MC}{\mathcal{MCG}}
\newcommand{\Diff}{\mathscr{D\hspace{-0.15em}i\hspace{-0.3em}f\hspace{-0.4em}f}}
\newcommand{\Dom}{\mathcal{D}}
\newcommand{\DomP}{\Dom_p}
\newcommand{\DomHK}{\Dom_{h\!k}}
\newcommand{\DomHKa}{\Dom_{h\!k^+}}
\newcommand{\DomHKb}{\Dom_{h\!k\!s}}
\newcommand{\Per}{\rho}
\newcommand{\PerP}{\Per_p}
\newcommand{\PerHK}{\Per_{h\!k}}
\newcommand{\PerHKa}{\Per_{h\!k^+}}
\newcommand{\PerHKb}{\Per_{h\!k\!s}}
\newcommand{\MBM}{\mathrm{MBM}}
\newtheorem{defn}{Definition}[section]
\newtheorem{prop}[defn]{Proposition}
\newtheorem{thm}[defn]{Theorem}
\newtheorem{lem}[defn]{Lemma}
\newtheorem{cor}[defn]{Corollary}
\theoremstyle{remark}
  \newtheorem{rem}[defn]{Remark}
\newcommand{\version}{Version 3.1 (arxiv), Oct. 21, 2024}
\begin{document}


\begin{center}
{{\LARGE Rigid currents on compact hyperk\"ahler manifolds}\\[6mm]}
Nessim Sibony$^\dagger$,
Andrey Soldatenkov,
Misha Verbitsky\footnote{Misha Verbitsky is
partially supported by FAPERJ 	
SEI-260003/000410/2023 and CNPq - Process 310952/2021-2.

MSC 2020: 53C26, 14J42, 32U40, 37F80.}
\end{center}

{\small\noindent\hfill
\begin{minipage}[t]{0.8\linewidth}
{\bf\hfill Abstract \hfill\phantom{}}\\
A rigid cohomology class on a complex manifold
is a class that is represented by a unique
closed positive current. The positive current representing a rigid
class is also called rigid.
For a compact K\"ahler manifold $X$ all eigenvectors of hyperbolic
automorphisms acting on $H^{1,1}(X)$ that have non-unit eigenvalues
are rigid classes. Such classes are always parabolic,
namely, they belong to the boundary of the K\"ahler cone
and have vanishing volume. We study parabolic $(1,1)$-classes
on compact hyperk\"ahler manifolds with $b_2 \ge 7$.
We show that a parabolic class is rigid if it is not orthogonal to a rational vector with respect to the BBF form.
This implies that a general parabolic class on a hyperk\"ahler manifold is rigid.
\end{minipage}\hfill}

\setlength{\headheight}{15pt}
\fancypagestyle{plain}{
  \fancyhf{}
  \cfoot{\small -- \thepage \ --} \rfoot{\tiny \sc\version}
  \renewcommand{\headrulewidth}{0pt}%
}
\thispagestyle{plain}

\setcounter{tocdepth}{2}
{\small
\tableofcontents
}


\section{An introduction to rigid currents}


\subsection{Rigid currents in complex dynamics}

{\em Rigid currents} on complex manifolds
are positive closed $(p,p)$-currents that admit a unique
positive closed representative in their cohomology class.
More specifically, let $X$ be a compact K\"ahler manifold of dimension $k$.
Consider a cohomology class $\alpha \in H^{p,p}_\bbR(X)$, where the subscript $\bbR$
means that $\bar{\alpha} = \alpha$. The question that we are interested
in is whether it is possible to represent $\alpha$ by a closed positive current
and to what extent such a representation is unique (see e.g. \cite{Dem} for
the definition of positive currents and an overview of the underlying theory).
Thinking of currents as of differential forms whose coefficients are distributions,
we denote the space of currents of Hodge type $(p,q)$ by $\EE'(X)^{p,q}$. For
a current $T\in \EE'(X)^{p,p}_\bbR$ we will write $T\ge 0$ and call $T$ positive if 
for any $\eta_1,\ldots,\eta_{k-p}\in \Lambda^{1,0}X$ we have
$$
(\ii)^{k-p}\langle T, \eta_1\wdg\bar{\eta}_1\wdg\ldots\wdg \eta_{k-p}\wdg\bar{\eta}_{k-p}\rangle \ge 0.
$$
For a closed current $T$ we denote by $[T]$ its cohomology class. Define
$$
\CC_\alpha = \{ T\in \EE'(X)_\bbR^{p,p}\st T\ge 0,\,dT=0,\,[T]=\alpha\}.
$$

The class $\alpha$ is called {\em rigid} if $\CC_\alpha$ contains exactly one element.
The unique positive closed current representing a rigid class is also called rigid.
Restricting to the case $p=1$, recall that a class $\alpha\in H^{1,1}_\bbR(X)$ is called {\em pseudo-effective} if $\CC_\alpha \neq \emptyset$.
A pseudo-effective class is called {\em nef} if it lies in the closure of the K\"ahler cone of $X$.


To put our work in context, let us recall how rigid currents appear in holomorphic dynamics, see e.g. \cite{DS3}.
For a holomorphic map $f\colon X\to X$ the $p$-th dynamical degree $d_p(f)$
is defined as the spectral radius of $f^*$ acting on $H^{p,p}(X)$.
It follows from \cite{Gr} that the function $p\mapsto \log(d_p(f))$ is concave
and the sequence of dynamical degrees is of the following form:
$$
1=d_0 < d_1 <\ldots <d_{p_0} =\ldots = d_{p_1} > \ldots > d_{k-1} > d_k = 1.
$$
We will call a non-zero closed positive current $T\in \EE'(X)^{p,p}_\bbR$ a {\em Green current} for $f$ of order $p$
if $f^*T = d_p(f) T$. Recall the following result which is a special case
of \cite[Theorem 4.2.1]{DS3} combined with \cite[Theorem 4.3.1]{DS3}.
\begin{thm}[Dinh--Sibony, \cite{DS3}]
Let $f$ be a holomorphic automorphism of a compact K\"ahler manifold $X$
and $d_j(f)$ the dynamical degrees of $f$.
Assume that $d_{p-1}(f) < d_p(f)$ for some $p\ge 1$. Then there exists a Green
current $T$ of order $p$ for $f$ and this current is rigid.
\end{thm}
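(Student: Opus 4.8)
The plan is to produce the Green current together with its rigidity in one stroke, by studying the dynamics of the normalized pull-back $d_p^{-n}(f^n)^*$ on the compact convex set of positive closed currents representing a suitable cohomology class. First, since $f$ is biholomorphic, $f^*$ carries positive closed $(p,p)$-currents to positive closed $(p,p)$-currents, so it is a linear automorphism of $H^{p,p}_\bbR(X)$ preserving the salient closed convex cone of pseudo-effective classes; an application of the Perron--Frobenius theorem (see \cite{DS3}) shows that $d_p$, the spectral radius of $f^*$ on $H^{p,p}$, is an eigenvalue of $f^*$ realized by a nonzero pseudo-effective class $\theta$, so that $f^*\theta=d_p\theta$. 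Fix a K\"ahler form $\omega$, set $m:=\int_X\theta\wdg\omega^{k-p}>0$, and let $\CC_\theta\subset\EE'(X)^{p,p}_\bbR$ be the set of positive closed currents with cohomology class $\theta$. It is nonempty ($\theta$ is pseudo-effective), convex, and compact for the weak topology of currents, because positivity, closedness and the cohomology class are all stable under weak limits while the mass is pinned to $m$.

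Next I would introduce $\Phi_n:=d_p^{-n}(f^n)^*$. From $f^*\theta=d_p\theta$ one also has $(f^*)^{-1}\theta=d_p^{-1}\theta$, and it follows that both $\Phi_n$ and $d_p^n(f^{-n})^*$ map $\CC_\theta$ into itself; hence $\Phi_n$ restricts to an affine \emph{bijection} of $\CC_\theta$, with inverse $d_p^n(f^{-n})^*$. The heart of the argument is then that $\Phi_n$ contracts $\CC_\theta$ to a point. For $S,S'\in\CC_\theta$ the current $S-S'$ is closed of type $(p,p)$ with vanishing cohomology class, so by the $\partial\bar\partial$-lemma for currents $S-S'=dd^c W$ for some real $(p-1,p-1)$-current $W$, whence $\Phi_n(S)-\Phi_n(S')=dd^c\bigl(d_p^{-n}(f^n)^*W\bigr)$; the claim is that $d_p^{-n}(f^n)^*W\to 0$ weakly, \emph{uniformly} for $S,S'$ ranging over $\CC_\theta$. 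The mechanism is that, pairing with a smooth $(k-p+1,k-p+1)$-form $\phi$ and transposing the pull-back, $\langle(f^n)^*W,\phi\rangle=\langle W,(f^n)_*\phi\rangle$, and $(f^n)_*=(f^{-n})^*$ acts on $(k-p+1,k-p+1)$-forms with growth governed by the spectral radius of $f^*$ on $H^{p-1,p-1}$, which by Poincar\'e duality equals $d_{p-1}$; since $d_{p-1}<d_p$ by hypothesis, the normalized operator decays geometrically, provided $W$ is chosen as a suitably controlled potential and the bound is made uniform in $S,S'$.

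Granting this contraction estimate, the conclusion is formal. Fix a metric $\mathsf d$ inducing the weak topology on the bounded set of currents of mass $m$. The estimate says $\mathrm{diam}_{\mathsf d}\bigl(\Phi_n(\CC_\theta)\bigr)\to 0$; but $\Phi_n(\CC_\theta)=\CC_\theta$ because $\Phi_n$ is a bijection of $\CC_\theta$, so $\mathrm{diam}_{\mathsf d}(\CC_\theta)=0$, i.e. $\CC_\theta=\{T\}$ consists of a single current. Since $\Phi_1$ maps the one-point set $\CC_\theta$ to itself, $\Phi_1(T)=T$, that is $f^*T=d_pT$, so $T$ is a Green current of order $p$, nonzero since $\int_X T\wdg\omega^{k-p}=m>0$; and $\CC_{[T]}=\CC_\theta=\{T\}$ is precisely the assertion that $T$ is rigid. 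The main obstacle is the contraction estimate itself: controlling the weak limit of $d_p^{-n}(f^n)^*W$ for a potential $W$ that is neither smooth nor positive --- and, crucially, doing so uniformly over $\CC_\theta$ rather than merely pointwise --- is exactly what the super-potential calculus of \cite{DS3} is designed for (when $p=1$ one may instead invoke classical compactness and integrability properties of quasi-plurisubharmonic functions); all the remaining steps amount to the bookkeeping above.
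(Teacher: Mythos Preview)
The paper does not prove this theorem; it is stated as a citation from \cite{DS3} (``a special case of Theorem~4.2.1 combined with Theorem~4.3.1'' there). What the paper does contain is the $p=1$ version in Section~\ref{sec_dynam}, under the extra assumption that $f$ preserves a volume form: one works with the compact set $\Phi_\eta$ of normalized quasi-psh potentials, observes that $\varphi\mapsto u+\lambda^{-1}f^*\varphi$ is a bijection of $\Phi_\eta$ and a strict $L^1$-contraction (the volume invariance gives $\|F(\varphi)-F(\psi)\|_{L^1}=\lambda^{-1}\|\varphi-\psi\|_{L^1}$), hence $\Phi_\eta$ is a point. Your outline is exactly this argument lifted to general $p$: the Perron--Frobenius step replaces the ad hoc choice of the dynamical class, the set $\CC_\theta$ of positive closed currents plays the role of $\Phi_\eta$, and the bijection/contraction dichotomy forces $\CC_\theta$ to be a singleton. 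So the strategy matches both the paper's $p=1$ sketch and the approach in \cite{DS3}.

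The one point worth flagging is the contraction step. For $p=1$ the paper gets an honest metric contraction (factor $\lambda^{-1}$) because quasi-psh potentials are functions and $f^*$ is an $L^1$-isometry for the invariant volume. For $p\ge 2$ your heuristic---that $(f^{-n})^*$ on test $(k-p+1,k-p+1)$-forms grows like $d_{p-1}^n$---is correct only at the cohomological level; controlling actual forms or currents (and choosing the potential $W$ for $S-S'$ in a way that is uniformly bounded over $\CC_\theta$) is genuinely nontrivial and is precisely what the super-potential formalism supplies. You acknowledge this and defer to \cite{DS3}, which is appropriate here since the theorem is a citation. Just be aware that without that machinery the step ``$d_p^{-n}(f^n)^*W\to 0$ uniformly'' is a gap, not a routine estimate.
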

This direction of research originates from the work of Cantat \cite{_Cantat:K3_}, \cite{_Cantat:K3-2_}, who
has discovered and studied dynamical currents on K3 surfaces.
We refer to \cite{DS3} for much more precise and general results of this form,
and to the references in loc. cit. for an overview of how rigid currents arise
in other settings.

Note that when the above theorem applies to $p=1$ and $T$ is the corresponding
Green $(1,1)$-current, its cohomology
class $\alpha = [T]\in H^{1,1}_\bbR(X)$ satisfies $\alpha^k = 0$, where $k = \dim_\bbC (X)$.
This follows from the condition $d_1(f) > 1$ and the fact that the automorphism $f$ acts as the identity
on the top degree cohomology of $X$. We introduce the following terminology:
a cohomology class $\beta\in H^{1,1}_\bbR(X)$ is called {\it parabolic} if
it is nef and $\beta^k = 0$.

\subsection{Rigid currents on complex tori}

We give an example of a rigid current which is
smooth, unlike almost all rigid currents considered later in
this paper. This subsection is included here to illustrate the
concept of rigidity with the most elementary and hands-on example.
However, the mapping class group argument (Subsection
\ref{_orbits_parabolic_Subsection_}) which is
central for our work,  can be applied to a compact
torus as well, bringing another proof of the following
proposition. 


\begin{prop}\label{_rigid_on_torus_Proposition_}
Consider a translation-invariant
holomorphic foliation ${\cal F}\subset TM$ on
a compact complex torus $M$ with $\mathrm{rk}(\FF) = d$. Suppose that
$\omega$ is a closed positive (1,1)-form vanishing on ${\cal F}$
and strictly positive in the transversal directions.
Assume that one (and hence all) leaves of
${\cal F}$ are dense in $M$. Then $\omega$ is rigid.
\end{prop}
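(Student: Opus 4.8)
The plan is to show that any closed positive $(1,1)$-current $T$ with $[T]=[\omega]$ must actually equal $\omega$, by exploiting the density of the leaves of $\FF$ together with translation-invariance. First I would write $T = \omega + dd^c\varphi$ for some quasi-plurisubharmonic function $\varphi$ on $M$ (this is legitimate since $M$ is K\"ahler and $[T]=[\omega]$; the $dd^c$-lemma and the Bedford--Taylor theory guarantee such a potential, unique up to an additive constant). Positivity of $T$ means $\omega + dd^c\varphi \ge 0$, i.e. $\varphi$ is $\omega$-plurisubharmonic. The key structural observation is that along each leaf $L$ of $\FF$, the form $\omega$ restricts to zero, so the restriction $\varphi|_L$ is plurisubharmonic on $L$ (as a current, $dd^c(\varphi|_L) = (\omega+dd^c\varphi)|_L \ge 0$, using $\omega|_L = 0$).

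Next I would use the density of the leaves. Since one, hence every, leaf is dense in $M$, take a leaf $L$; the plurisubharmonic function $\varphi|_L$ is bounded above on $L$ (being the restriction of the globally bounded-above quasi-psh function $\varphi$ on the compact $M$). But $L$, with its induced complex structure, is a translate of a complex Lie subgroup $\exp(\frF)\subset M$ — equivalently, the image of $\C^d$ under a linear map to $M$ — so $L$ is either a compact subtorus or a dense immersed copy of $\C^j\times(\text{torus})$. In the dense case $L$ is \emph{not} Stein/hyperbolic; in particular I would argue that a plurisubharmonic function on $L$ that is bounded above must be constant. The cleanest way: pull back $\varphi|_L$ along the universal cover $\C^d \to L$; we get a plurisubharmonic function on $\C^d$ bounded above, hence constant by the Liouville theorem for psh functions. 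Therefore $\varphi$ is constant along every leaf. Since the leaves are dense and $\varphi$ is, say, upper semicontinuous (and in $L^1$), constancy along a dense leaf forces $\varphi$ to be (a.e., hence as a current) constant on $M$; thus $dd^c\varphi = 0$ and $T = \omega$.

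The step I expect to be the main obstacle is making the restriction argument rigorous at the level of currents: $\varphi$ is only quasi-psh, a priori merely in $L^1_{loc}$ and $-\infty$ on a pluripolar set, so ``$\varphi|_L$'' and ``$dd^c(\varphi|_L)$'' need to be interpreted carefully — one must check that the slicing/restriction of the current $T$ to a generic (in fact every) leaf is well-defined and positive, and that the slice of $dd^c\varphi$ is $dd^c$ of the slice of $\varphi$. Here translation-invariance of $\FF$ helps a lot: the foliation is a genuine linear foliation, its leaves form a nice (if non-Hausdorff) quotient, and one can choose affine transversal coordinates in which $\omega$ is a constant-coefficient semipositive form with kernel exactly $T\FF$; averaging $T$ over the (compact) group of translations that preserve $\FF$, or disintegrating along the fibration by leaf-closures, lets one reduce to the slice statement. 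An alternative to sidestep the slicing technicalities entirely is the mapping class group / Teichm\"uller argument alluded to in Subsection~\ref{_orbits_parabolic_Subsection_}: realize $[\omega]$ as a limit of classes whose rigidity is already known and pass to the limit; but I would first attempt the direct potential-theoretic proof above, as it is the most hands-on and self-contained.
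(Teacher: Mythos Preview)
Your approach is correct and amounts to the paper's Step~2: write $T=\omega+dd^c\varphi$ with $\varphi$ quasi-psh (hence usc), observe that $\varphi$ is plurisubharmonic along each leaf since $\omega|_L=0$ and $T\ge 0$, and then use Liouville/the maximum principle on the leaf through the point where $\varphi$ attains its maximum, together with density and upper semicontinuity, to conclude that $\varphi$ is constant. Where you differ is that the paper inserts a preliminary Step~1: an averaging-and-cohomology argument (if the averaged current did not vanish on $\FF$, a high enough power of $[\omega]$ would be nonzero in cohomology, a contradiction) showing that \emph{every} positive closed current in $[\omega]$ must itself annihilate $\FF$, so that $\varphi|_L$ is in fact pluriharmonic. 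Your route shows Step~1 is not needed for rigidity --- plurisubharmonicity already suffices for the maximum principle --- while the paper's Step~1 yields the stronger standalone statement that the rigid current vanishes on the foliation.

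The slicing technicality you flag largely dissolves once you restrict the \emph{function} $\varphi$ rather than the current $T$: locally $\omega=dd^c h$ with $h$ smooth, so $h+\varphi$ is genuinely psh and restricts to any immersed complex leaf as a psh function (or $\equiv -\infty$); since $\omega|_L=0$, the function $h|_L$ is pluriharmonic, so $\varphi|_L$ itself is psh. Taking $L$ to be the leaf through a maximum point of $\varphi$ rules out the $-\infty$ case and the argument closes exactly as in the paper --- no disintegration or averaging of $T$ is required.
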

\begin{proof}{\em Step 1.}
Averaging $\omega$
with respect to translations, we may assume that
$\omega$ is translation invariant. We are going to show that any other
current $\omega_1$ in the same cohomology class also vanishes
on ${\cal F}$, that is, satisfies $i_X \omega_1=0$ for
any vector field tangent to ${\cal F}$. Indeed, if
$\omega_1$ does not vanish on ${\cal F}$,
it follows that the distribution
$i_{X} i_{I(X)} \omega_1$ is non-zero for some translation
invariant vector field $X$ tangent to ${\cal F}$,
where $I$ denotes the complex structure on $M$.
We replace $\omega_1$ with the average $\Av(\omega_1)$
of its translations by the torus.
We obtain a smooth form which satisfies $i_{X} i_{I(X)} \omega_1>0$
for some translation invariant vector field $X$ tangent to
${\cal F}$. The form $\omega_1$ is in the same cohomology class as $\omega$
and satisfies $(\omega+ \omega_1)^{d+1}\neq 0$,
because $\omega+\omega_1$ is strictly positive in 
the direction transversal to $\FF$, and non-zero
somewhere in $\FF\subset TM$. Since the form $(\omega+
\omega_1)^{d+1}$
is positive, its cohomology class is non-zero
(indeed, we can multiply this form by an appropriate
power of a K\"ahler form to obtain a positive 
volume form, non-zero somewhere on $M$).
However, $[\omega_1] = [\omega]$,
hence $[\omega+\omega_1]^{d+1} = [2\omega]^{d+1} = 0$.
Therefore, all positive closed $(1,1)$-currents in the cohomology
class of $\omega$ vanish on ${\cal F}$.

{\em Step 2.}
Let $\omega_1$ be a current in the same cohomology class
as $\omega$. Then $\omega_1$ also vanishes on ${\cal F}$ by Step 1.
By the $dd^c$-lemma, $\omega_1 - \omega= dd^c f$ for some distribution $f$
which can be locally obtained as the sum of a plurisubharmonic
and a smooth function. By  \cite[Theorem A.5]{HL}, $f$ is upper semicontinuous,
hence it reaches its maximum somewhere on $M$. On the corresponding
leaf of ${\cal F}$, the function $f$ is constant by the maximum
principle; since this leaf is dense in $M$, $f$ is constant and $\omega_1 = \omega$.
\end{proof}

When the foliation ${\cal F}$ is obtained as the
unstable foliation of an Anosov automorphism of a torus,
\ref{_rigid_on_torus_Proposition_} was already shown
in \cite[Remarque 2.3]{_Cantat:K3_}.

In the setting of complex surfaces,
the rigid currents supported on 1-dimensional complex
foliations (and even laminations) were studied
extensively in \cite{_Dinh_Nguen_Sibony_}, obtaining
rigidity in a more general context.

\subsection{Rigid currents on complex surfaces}

Another simple example of a rigid pseudo-effective class can be constructed as follows.
Assume that $M$ is a projective surface. Then the pseudo-effective cone $\PP_M$
is the dual of the nef cone, see e.g. \cite[Theorem 4.1]{Bo}. Assume that $C\subset M$ is an
irreducible curve with $C^2< 0$. Then its cohomology class $[C]$ spans an extremal ray
of $\PP_M$, see e.g. \cite[Theorem 3.21]{Bo}. The cohomology class $[C]$ is represented by
the current of integration over $C$, and that is the unique closed positive current
representing $[C]$, see \cite[Proposition 3.15]{Bo}. In
this example the class $[C]$
is not nef. However, one can also construct an example of a surface $M$ with a
nef irreducible curve $C\subset M$ such that the class
$[C]$ is rigid, see \cite[Example 1.7]{DPS}.

Proposition \ref{_rigid_on_torus_Proposition_}
can be used to obtain rigid currents on a K3 surface.
Another source of rigid currents originates in S. Cantat's
seminal research from early 2000's, \cite{_Cantat:K3_}, \cite{_Cantat:K3-2_}. 
Cantat has shown that
any positive current of dynamical origin is rigid if it is
obtained from a hyperbolic automorphism of a K3 surface. These results
were extended and generalized at great length by Dinh and Sibony
(\cite{DS3}; see also Subsection \ref{sec_dynam}).

Let $f\colon M \to M$ be a holomorphic automorphism of a complex surface.
Following Cantat, we call $f$ {\em loxodromic} if
its action on $H^{1,1}(M)$ has an eigenvalue $\alpha$ that satisfies $|\alpha|>1$.
Since the action of $f$ on $H^{1,1}(M)$ is isometric with
respect to the intersection form which has signature $(1,h^{1,1}(M)-1)$,
it can have at most one eigenvalue $\alpha$ with $|\alpha|>1$.
Denote the corresponding eigenvector by $\eta$.
The standard dominant eigenvalue argument implies
that the sequence $\frac{(f^*)^n[\omega]}{\alpha^n}$
converges to a vector proportional to $\eta$
for all cohomology classes $[\omega]\in H^{1,1}(M)$
outside of a closed subspace. Applying the same
argument to a general K\"ahler form $\omega$ and using
compactness of the set of positive $(1,1)$-currents
with bounded mass, we obtain that the sequence
$\frac{(f^*)^n\omega}{\alpha^n}$ has a subsequence
which converges to a positive closed $(1,1)$-current.

This current is called {\em a dynamical current}.
It is a result of Cantat \cite{_Cantat:K3_},
that dynamical currents are always rigid,
see section \ref{sec_dynam}.

When $M$ is a complex torus, this argument
leads to the situation described in 
Proposition \ref{_rigid_on_torus_Proposition_}:
the dynamical current is smooth and 
transversally K\"ahler with respect
to a translation invariant holomorphic
foliation. 

Surprisingly, on a K3 surface the
structure of dynamical currents is entirely
different. The following observations were
made by Cantat and Dupont \cite{_Cantat_Dupont_}.

Let $T_f^+$ be the
dynamical current associated with
a loxodromic automorphism $f$ of a K3
surface. Since the action of $f$ on $H^{1,1}(M)$
preserves the intersection form,
the eigenvalues of $f$ go in pairs,
and there exists an eigenvector with
an eigenvalue $\alpha^{-1}$.
Let $T^-_f$ be the dynamical current
associated with $f^{-1}$,
its cohomology class is proportional
to the eigenvector of $f|_{H^{1,1}(M)}$
with eigenvalue $\alpha^{-1}$.

In \cite{_Cantat:K3_,_Cantat:automorphisms_},
Cantat has shown that the currents
$T^+_f$, $T^-_f$ have H\"older continuous
potentials, and therefore the product
$T^+_f\wedge T^-_f$ is a well defined
measure $\mu_f$ on a K3 surface. As shown in \cite{_Cantat:automorphisms_},
this measure is equal to the measure of maximal entropy for $f$.

In \cite{_Cantat_Dupont_}
Cantat and Dupont have shown that
the measure $\mu_f=T^+_f\wedge T^-_f$
is absolutely continuous only if
the K3 surface $M$ is obtained as
a resolution of a finite quotient of a torus
(``of Kummer type''), and the automorphism
$f$ is induced by a loxodromic automorphism
of a torus.
For non-projective K3 surfaces, this theorem
is due to Filip and Tosatti \cite{_Filip_Tosatti:rigidity_}.

Since 2019--2020, many new results on
rigid currents on K3 surfaces were obtained,
due to Cantat, Dujardin, Filip and Tosatti;
some of them inspired by an earlier versions
of this paper.

On a complex surface $M$, a nef cohomology
class $\eta\in H^{1,1}(M)$ is called {\em parabolic}
if its square vanishes.
In \cite[Theorem 6.11]{_Cantat_Dujardin:random_},
Cantat and Dujardin used the random walk on
a complex surface with sufficiently rich
automorphism group to construct an invariant
measure $\mu_\6$ on the set of limit classes
(a posteriori parabolic) in ${\Bbb P}H^{1,1}(M)$.
They prove that $\mu_\6$-almost all
parabolic classes are rigid.

In \cite{_Filip_Tosatti:canonical_} this result
is developed futher. Citing an early
version of this paper, Filip and Tosatti
show that all parabolic currents on a K3 surface $M$
are rigid, if its Picard rank is $\ge 3$
and $M$ does not admit $(-2)$-curves.\footnote{
These assumptions are  needed in order for the automorphism
group to act ergodically on the projectivization of the set
of parabolic classes, identified with the absolute of the
hyperbolic space form obtained as the projectivization
of the K\"ahler cone in the space
$H^{1,1}_\bbR(M)$.} 
Note that in the present paper we managed to
avoid this restriction, using the mapping
class group in place of the automorphism group.

In \cite{_Filip_Tosatti:gaps_}
this direction is developed further:
Filip and Tosatti prove that the support of 
some examples of rigid currents on a K3 surface $M$ 
(called ``canonical currents'' in that paper) is strictly smaller than $M$.

\subsection{Rigid currents and extremal currents}

The notion of a rigid current that we study in the present paper is
related to another important notion of complex analytic geometry,
namely to the notion of an {\it extremal positive current}, see \cite{Le} and \cite{De1}.

For a compact K\"ahler manifold $X$ denote:
$$
\mathcal{SP}^{p}(X) = \{T\in \EE'(X)^{p,p}_\bbR\st dT=0,\, T\ge 0\}.
$$
This set is a closed convex cone in the space of $(p,p)$-currents.
A current $T\in\mathcal{SP}^p(X)$ is called {\it extremal} if
$T$ generates an extremal ray of that cone. The latter means that
for any decomposition $T = T_1+T_2$ with $T_1,T_2 \in \mathcal{SP}^p(X)$
the currents $T$, $T_1$ and $T_2$ are proportional. It follows from
the Krein--Milman theorem that $\mathcal{SP}^p(X)$ is the closure
of the convex hull of the subset of extremal currents.

For an irreducible closed subvariety $Z\subset X$, the current $\II_Z$
of integration along $Z$ is extremal, but $\mathcal{SP}^p(X)$
may contain extremal currents that are not of the form $\II_Z$,
see \cite[Th\'eor\`eme 4.1]{De1}, where such currents were constructed on $\bbP^n$.

It is clear that an extremal current
may be non-rigid: consider, for example, the current of integration $\II_D$
along an ample divisor $D\subset X$. Vice versa, a rigid current
need not be extremal: consider, for example, the case when $X$ is
a surface containing two irreducible curves $C_1$ and $C_2$ with
$[C_1]^2<0$, $[C_2]^2<0$ and $[C_1]\cdot[C_2] = 0$. In this case
by \cite[Proposition 3.15]{Bo} any convex combination $t\,\II_{C_1}+(1-t)\II_{C_2}$,
$0< t < 1$ is a rigid current, but this current is clearly not extremal.
However, we make the following simple observation.

\begin{prop}
Assume that $T\in \mathcal{SP}^1(X)$ is a rigid current whose
cohomology class $[T]$ generates an extremal ray of the pseudo-effective
cone $\PP_X$. Then the $T$ is extremal.
\end{prop}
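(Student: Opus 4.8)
The plan is to argue by contradiction, using the definition of extremality directly. Suppose $T \in \mathcal{SP}^1(X)$ is rigid, its class $[T]$ generates an extremal ray of $\PP_X$, but $T$ is not extremal. Then there is a decomposition $T = T_1 + T_2$ with $T_1, T_2 \in \mathcal{SP}^1(X)$ such that $T$, $T_1$, $T_2$ are not all proportional; in particular, at least one of $T_1, T_2$ is not a scalar multiple of $T$. On the level of cohomology classes we have $[T] = [T_1] + [T_2]$ in $\PP_X$. Since $[T]$ spans an extremal ray of the pseudo-effective cone, the classes $[T_1]$ and $[T_2]$ must both lie on that ray, so $[T_1] = t\,[T]$ and $[T_2] = (1-t)\,[T]$ for some $t \in [0,1]$ (the edge cases $t \in \{0,1\}$ will be handled below).

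The next step is to pass from proportionality of classes back to proportionality of currents, and this is where rigidity enters. If $0 < t < 1$, then $\tfrac{1}{t}T_1$ is a positive closed current whose class equals $[T]$, so by rigidity of $[T]$ we get $\tfrac{1}{t}T_1 = T$, i.e. $T_1 = tT$, and likewise $T_2 = (1-t)T$; hence $T$, $T_1$, $T_2$ are all proportional, contradicting our assumption. This is the heart of the argument: rigidity is precisely the statement that the fibre $\CC_{[T]}$ is a single point, so any positive closed current in the class of $T$, after scaling, must coincide with $T$.

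It remains to dispose of the degenerate possibilities $t = 0$ or $t = 1$, which correspond to one of the summands having zero cohomology class. Say $[T_1] = 0$. A closed positive $(1,1)$-current with vanishing cohomology class must itself vanish: pairing it with a power of a K\"ahler form $\omega^{k-1}$ gives $\int_X T_1 \wedge \omega^{k-1} = 0$, and since $T_1 \ge 0$ this forces $T_1 = 0$ (the same mass-zero argument used in the proof of Proposition \ref{_rigid_on_torus_Proposition_}). Then $T = T_2$ and the decomposition is trivial, so again $T$, $T_1$, $T_2$ are trivially proportional. In all cases we reach a contradiction, so $T$ is extremal.

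The argument is short and essentially formal; there is no serious obstacle. The only point requiring a little care is the bookkeeping of the degenerate cases $t \in \{0,1\}$, together with the observation that a nonzero positive closed $(1,1)$-current cannot be cohomologically trivial on a compact K\"ahler manifold — but this is standard and was already invoked above.
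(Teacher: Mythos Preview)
Your proof is correct and follows essentially the same approach as the paper: pass to cohomology, use extremality of the ray $[T]$ to get that $[T_1]$ and $[T_2]$ are proportional to $[T]$, then invoke rigidity of $[T]$ to conclude that $T_1$ and $T_2$ are themselves proportional to $T$. The paper's version is phrased directly rather than by contradiction and does not spell out the degenerate case $t\in\{0,1\}$ (where one summand has zero cohomology class), but the substance is identical.
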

\begin{proof}
Assume that $T = T_1 + T_2$ with $T_1,T_2\in \mathcal{SP}^1(X)$.
Then $[T] = [T_1] + [T_2]$ and therefore $[T] = \alpha_1[T_1] = \alpha_2[T_2]$
for some $\alpha_1,\alpha_2>0$, because $[T]$ generates an extremal ray of $\PP_X$.
It follows from rigidity of $T$ that $T = \alpha_1 T_1 = \alpha_2 T_2$.
\end{proof}

As a consequence of the above proposition, we note that the rigid
parabolic currents that are described in Corollary \ref{cor_main}
are extremal, because the corresponding cohomology classes lie
on the isotropic part of the boundary of the K\"ahler cone, therefore
generating extremal rays of $\PP_X$. 

\subsection{SYZ conjecture and Lelong numbers}

Let $X$ be a hyperk\"ahler manifold, that is,
a holomorphically symplectic compact manifold
of K\"ahler type (see section \ref{sec_hk} for
more details). We will tacitly assume
that the  hyperk\"ahler manifolds we consider
are {\em of maximal holonomy} (subsection \ref{sec_HK_intro}),
that is, satisfy $\pi_1(X)=1$,
$H^{2,0}(X)=\C$.

The main motivation for the current research is due
to the following conjecture, called {\em hyperk\"ahler SYZ conjecture};
it was independently proposed by Tyurin, Bogomolov, Huybrechts and Sawon
(see \cite{_Verbitsky:SYZ_} and refe\-rences therein).
Let $\eta$ be an integral parabolic class on a hyperk\"ahler
manifold of maximal holonomy, and $L$ the holomorphic
line bundle that satisfies $c_1(L)=\eta$.
The hyperk\"ahler SYZ conjecture claims that $L$ is semiample. 

As follows from the foundational results of Matsushita
\cite{_Matsushita:fibred_}, the projective map associated
with such $L$ defines a Lagrangian fibration, that is,
a map $X \to S$ with all fibres holomorphic Lagragian 
in $X$.

The SYZ conjecture is proven for all known examples
of hyperk\"ahler manifolds. For the deformations
of Hilbert schemes of K3 surfaces it follows from
\cite{Bayer-Macri}, \cite{_Markman:fibrations_K3^n_},
and for the deformations of generalized Kummer varieties it
follows from \cite{Yoshioka}.
For two sporadic examples, O'Grady 6 and O'Grady 10
and their deformations, the SYZ conjecture is proven
in \cite{Mongardi-Rap}, \cite{Mongardi-Onorati}.

The first idea  was to obtain the Lagrangian
fibration as the kernel of a semi-positive form
representing $\eta$. Then it became clear that
the positive currents representing irrational
parabolic classes are even more interesting.

The singularities of positive currents representing
parabolic classes bear much importance 
to the work on the SYZ conjecture.
One can describe the singularities of positive closed 
$(p,p)$-classes using the Lelong numbers, \cite{Dem}.
These numbers represent ``algebraically significant''
singularities of a current. A positive closed $(1,1)$-current
on a K\"ahler manifold with vanishing Lelong numbers
can always be obtained as a limit of closed positive
$(1,1)$-forms. However, the
converse assertion is false: currents
with non-zero Lelong numbers can also be obtained as
limits of closed, positive $(1,1)$-forms. 

In \cite{_Verbitsky:SYZ_} it was shown
that an integral parabolic class $\eta$
is $\Q$-effective (can be represented
by a $\Q$-divisor) if its Lelong numbers
vanish. In complex dimension 4, vanishing
of Lelong numbers actually implies the SYZ
conjecture, \cite[Theorem 1.6]{_Gongyo_Matsumura_}.

From the earliest results of Cantat it
is clear that dynamical currents on a K3 surface
have vanishing Lelong numbers. His argument
trivally extends to the dynamical currents
on all hyperk\"ahler manifolds. Semicontinuity
of Lelong numbers and Theorem \ref{thm_orbit_closure}
then imply that the Lelong numbers vanish
for all rigid currents associated with 
parabolic classes which are not orthogonal
to rational classes, if the assumptions of
\ref{thm_orbit_closure} hold.

There is a hope to apply this argument
to the SYZ conjecture. Were we able to understand the
dependence of a rigid current on its
cohomology class, we could be able to
show that any parabolic class can be
represented by a positive current with
vanishing Lelong numbers. This would
have proven the SYZ conjecture in complex
dimension 4, and $\Q$-effectivity
of parabolic line bundles in arbitrary
dimension.

\subsection{History of this paper}

This paper started in 2016 as a joint project of Nessim Sibony and Misha Verbitsky.
This early project was never finished due to advance of the pandemic
and Nessim's tragic death on October 30, 2021. By that time,
most statements were fleshed out, but the Lie-theoretic
part of the proof remained completely wrong. This old version was
almost entirely rewritten with the help of Andrey Soldatenkov,
who became our third co-author and completed the faulty arguments.

\subsection{Acknowledgements}

A.S. is most grateful to the Instituto Nacional de Matem\'atica Pura e
Aplicada (IMPA) in Rio de Janeiro for its help and hospitality. Both A.S. and M.V.
enjoyed the warm and friendly atmosphere of IMPA during the preparation
of this paper. The authors are grateful to the anonymous referees for
carefully reading the paper, for the useful remarks and suggestions,
and especially for correcting the definition of the diameter of a pseudo-effective class in section 3.


\section{Statement of the main results}


In the present paper we study rigidity properties of parabolic cohomology classes
on compact hyperk\"ahler manifolds. We will call a cohomology class $\alpha\in H^2(X,\bbR)$
{\em strongly irrational} if for any non-zero $\beta\in H_2(X,\bbQ)$ we have $\langle\alpha,\beta\rangle\neq 0$.
It is clear that a very general cohomology class is strongly irrational, more precisely,
the complement of strongly irrational classes is a countable union of hyperplanes. 
Our results show that any strongly irrational parabolic
class on a compact hyperk\"ahler manifold is rigid.

We next give a precise statement of our main result. By a {\it compact hyperk\"ahler manifold} we
will mean a compact simply connected K\"ahler manifold $X$ such that $H^0(X,\Omega^2_X)$
is spanned by a holomorphic symplectic form $\sigma$. We will discuss the necessary
facts from the theory of hyperk\"ahler manifolds in section \ref{sec_hk}.
For the statement of the main theorem we only need to recall that there
exists a natural non-degenerate quadratic form $q$ on $H^2(X,\bbQ)$
which is called the Beauville--Bogomolov--Fujiki (BBF) form. A cohomology
class $\alpha\in H^2(X,\bbC)$ satisfies $\alpha^{2k} = 0$ if and only
if $q(\alpha)=0$, where $2k = \dim_\bbC(X)$. For a class $\alpha\in H^2(X,\bbC)$
we will denote by $a^\perp$ the orthogonal complement of $\alpha$ in $H^2(X,\bbC)$
with respect to the BBF form.

\begin{thm}\label{thm_main}
Assume that $X$ is a compact hyperk\"ahler manifold with $b_2(X)\ge 7$
and $u\in H^{1,1}_\bbR(X)$ is a parabolic class, i.e.\ a nef class
with $q(u)=0$, where $q$ is the BBF form. The class $u$ is
rigid in  either of the following cases:
\begin{enumerate}
\item if $u$ is strongly irrational, i.e.\ $u^\perp \cap H^2(X,\bbQ) = 0$,
\item if $u^\perp \cap H^2(X,\bbQ)$ is spanned by a vector $v\in H^{2,0}(X)\oplus H^{0,2}(X)$.
\end{enumerate}
\end{thm}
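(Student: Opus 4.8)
The plan is to reduce rigidity of a parabolic class $u$ to a statement about the orbit of $u$ under the action of the mapping class group (equivalently, the monodromy group $\Gamma$ acting on $H^2(X,\bbZ)$ by isometries of the BBF form), and then to exploit density of that orbit. The starting observation is the one already used in complex dynamics: if $\gamma \in \Gamma$ is realized by a hyperkähler automorphism and acts hyperbolically (loxodromically) on $H^{1,1}$, then its expanding eigenvector is a parabolic class represented by a \emph{unique} positive closed current — the Dinh--Sibony theorem quoted above gives rigidity of the Green $(1,1)$-current, since $d_0 < d_1$ forces the top self-intersection of the class to vanish, i.e.\ the class is parabolic. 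So loxodromic eigenvectors are rigid. The main theorem will follow if we can show that a \emph{general} parabolic class is in the closure of the set of such eigenvectors, in a sense strong enough to propagate rigidity.

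The key steps, in order. First, I set up the Teichmüller-theoretic framework: the period domain for hyperkähler manifolds, the Kähler cone and its boundary, and the identification of parabolic classes with the boundary of the positive cone that meets the closed Kähler cone. Second, I record that $\Gamma$ acts on $H^2(X,\bbR)$ preserving the BBF form of signature $(3, b_2-3)$, and — crucially — that $\Gamma$ is an arithmetic lattice in $\mathrm{O}(3, b_2-3)$ (this is where $b_2 \ge 7$, hence real rank $\ge 3$, or at any rate enough room, enters), so by Borel-type density the $\Gamma$-orbit of a generic point on the isotropic quadric is dense, and moreover one can find, arbitrarily close to $u$, isotropic vectors that are expanding eigenvectors of genuinely loxodromic elements. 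The hypothesis that $u$ is strongly irrational (case 1) or that $u^\perp \cap H^2(X,\bbQ)$ is a single line spanned by a $(2,0)+(0,2)$ vector (case 2) is exactly what guarantees $u$ is not trapped in a proper rational subspace where the orbit could fail to be dense — i.e.\ the stabilizer situation is as small as possible. Third, and this is the analytic heart, I transport rigidity along the orbit and then across the limit: if $u_n \to u$ with each $u_n$ rigid (a loxodromic eigenvector), and $u_n = \gamma_n \cdot (\text{eigenvector})$, then the unique positive currents $T_n$ representing $u_n$ have uniformly bounded mass (the classes converge), so a subsequence converges weakly to some positive closed $T$ with $[T] = u$; to conclude $u$ is rigid one shows that \emph{any} positive closed current $T'$ with $[T'] = u$ must coincide with $T$, using a semicontinuity/compactness argument combined with the fact that near $u$ the cone $\mathcal{C}_\bullet$ behaves well — the isotropy of $u$ makes $u$ extremal in the pseudo-effective cone (as noted after the extremal-currents proposition), which is what forces uniqueness.

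The step I expect to be the main obstacle is the third one: propagating rigidity through a limit of cohomology classes is \emph{not} automatic, because the cones $\mathcal{C}_{u_n}$ are singletons but $\mathcal{C}_u$ a priori need not be, and weak limits of currents can acquire extra mass or split. The honest difficulty is to show that the limit current $T$ exhausts $\mathcal{C}_u$, i.e.\ that $\mathcal{C}_u = \{T\}$ and not merely $T \in \mathcal{C}_u$. I would attack this by a potential-theoretic argument in the spirit of Proposition~\ref{_rigid_on_torus_Proposition_}: given $T' \in \mathcal{C}_u$, write $T' - T = dd^c \varphi$ for a quasi-psh $\varphi$ (the $dd^c$-lemma, with upper semicontinuity from \cite{HL}), and exploit the loxodromic dynamics available arbitrarily near $u$ — pulling back by $\gamma_n$ and using contraction/expansion to force $\varphi$ to be constant, then letting $n\to\infty$. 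Making the limiting estimates uniform is where the arithmetic structure of $\Gamma$ and the precise form of the irrationality hypotheses on $u$ must be used quantitatively; I expect this to require a careful choice of the approximating sequence $\gamma_n$ so that the expansion constants blow up while the "angle" between $u$ and the relevant eigenspaces stays bounded below, controlled by the diameter of the pseudo-effective class $u$ (the invariant whose definition the referees corrected, per the acknowledgements).
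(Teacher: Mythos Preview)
Your overall strategy is right in spirit---use arithmeticity of the monodromy, density of orbits via Ratner, and the rigidity of dynamical currents as an anchor---but the direction of your limiting argument is backwards, and this is a genuine gap, not just a missing detail. You propose to approximate $u$ by loxodromic eigenvectors $u_n \to u$ and pass rigidity to the limit. But the relevant invariant (the diameter $\delta$ of the set of psh potentials, the one you allude to at the end) is \emph{upper} semi-continuous: if $u_n \to u$ with $\delta(u_n) = 0$, you only get $0 = \limsup \delta(u_n) \le \delta(u)$, which is vacuous. Your fallback plan---write $T' - T = dd^c\varphi$ and pull back by $\gamma_n$---cannot work as stated, because the $\gamma_n$ are diffeomorphisms that do not preserve the complex structure $I$; they act on the Teichm\"uller space, not holomorphically on the fixed manifold $X_I$, so there is no contraction mechanism on $\varphi$ available on $X_I$ itself.

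The paper's key insight, which you are missing, is to reverse the approximation. One works in the \emph{parabolic Teichm\"uller space} $\TeichP$ of pairs $(I,u)$, and shows (this is where the irrationality hypotheses and Ratner's theorem enter) that the $\MC^\circ$-orbit of the given pair $(I,u)$ is dense. In particular, the orbit accumulates on a single fixed point $(I_0,u_0)$ known to satisfy $\delta(u_0)=0$ (a loxodromic eigenvector on some other deformation $X_{I_0}$). Now one uses two facts: (i) $\delta$ is $\MC^\circ$-invariant when computed against the canonical volume form $\sigma^n\wedge\bar\sigma^n$, so $\delta$ is constant along the orbit of $(I,u)$; and (ii) $\delta$ is upper semi-continuous over the universal family near $(I_0,u_0)$. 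Combining these, $\delta(u) = \delta(\mu_i^* u) \to \text{something} \le \delta(u_0) = 0$, hence $\delta(u)=0$. The essential missing ingredients in your proposal are the $\MC^\circ$-invariance of the diameter (which requires choosing the volume form carefully) and the realization that one must push the orbit of $(I,u)$ toward a rigid point, not pull rigid points toward $u$.
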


A proof of the above theorem will be given in section \ref{sec_main_proof}.
The proof uses two main ingredients that, we believe, are interesting on their own. The first ingredient is a semicontinuity
result for diameters of pseudo-effective classes, Theorem \ref{thm_diam}.
This result applies to arbitrary families of K\"ahler manifolds, see section \ref{sec_diam}.
The second ingredient is the parabolic Teichm\"uller space introduced in section \ref{sec_teich}
and a theorem that guarantees that orbits of certain points under the
mapping class group action are dense in the parabolic Teichm\"uller space.
The latter result relies on Ratner's theory.

\begin{rem}
Note that the dynamical currents of 
S. Cantat never satisfy the first of the two
conditions of Theorem \ref{thm_main}. Indeed, for
any automorphism of a projective hyperk\"ahler manifold 
its action on the transcendental lattice has finite order
by \cite{_Oguiso:mcmullen_}.\footnote{The transcendental
lattice in this case is the orthogonal complement in $H^2(X, \Z)$ of the N\'eron--Severi lattice $H^{1,1}(X)\cap H^2(X, \Z)$.}
Therefore, any eigenvector $v\in H^2(X, \R)$ of the
automorphism which has a real eigenvalue $\lambda >1$
belongs to $W\otimes_\Q \R$, where $W = H^{1,1}(M)\cap H^2(X, \Q)$.
This implies that Cantat's dynamical
currents never satisfy the first of the two conditions
of Theorem \ref{thm_main}. The second condition might be satisfied
when the Picard group of $X$ has maximal possible rank.
All in all, the Cantat--Dinh--Sibony rigidity theorem is
(almost) never a special case of Theorem \ref{thm_main}. 
\end{rem}

\begin{rem}
We have no examples of irrational (i.e.\ not proportional to 
an element of $H^2(X,\Q)$) non-rigid parabolic cohomology classes $\eta \in H^{1,1}(X)$
on a maximal holonomy hyperk\"ahler manifold.
It would be interesting to find such an example, or to prove
that any irrational parabolic class is rigid.
\end{rem}

\begin{cor}\label{cor_main}
Assume that $X$ is a hyperk\"ahler manifold with $b_2(X)\ge 7$
and non-maximal Picard group, i.e.\ the rank of $\mathrm{Pic}(X)$ is less than $b_2(X)-2$.
Then there exists a non-empty open subset $U$ of the boundary of the K\"ahler cone $\partial \KK_X$
such that the rigid parabolic classes are dense in $U$.
\end{cor}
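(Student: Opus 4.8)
The plan is to deduce the corollary from Theorem \ref{thm_main} by producing, inside the boundary $\partial\KK_X$ of the Kähler cone, a nonempty open set $U$ on which strongly irrational parabolic classes form a dense subset; every such class is rigid by part (1) of Theorem \ref{thm_main}, and we are done. First I would recall the structure of $\partial\KK_X$: since $q$ has signature $(1, b_2(X)-3)$ on $H^{1,1}_\bbR(X)$ (because $b_2(X) \ge 7 > 3$, the signature is genuinely hyperbolic in more than one transverse direction), the set $\{q = 0\}$ of isotropic classes is a quadric cone of real dimension $\dim H^{1,1}_\bbR(X) - 1 = h^{1,1}(X) - 1$, and the Kähler cone $\KK_X$ is one of the connected components of the positive cone cut out by the monodromy/MBM wall structure. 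The parabolic classes are exactly the points of $\partial\KK_X$ lying on this isotropic quadric that are also nef; this is a real hypersurface in $\partial\KK_X$ and contains a nonempty relatively open subset $U$ of the part of the quadric cone that lies in the interior of the positive cone, away from the MBM walls (here one uses that the MBM/monodromy walls are a locally finite collection of hyperplanes, so their complement in the isotropic quadric is open and dense in the relevant component).

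Next I would address the irrationality. The condition defining non-strongly-irrational classes — that $u^\perp \cap H^2(X,\bbQ) \neq 0$ — means $u$ is orthogonal, with respect to the BBF form, to some nonzero rational class; the set of such $u$ is a countable union of rational hyperplanes in $H^2(X,\bbR)$ (one hyperplane $\beta^\perp$ for each primitive $\beta \in H^2(X,\bbZ)$). Intersecting with the isotropic quadric inside $\partial\KK_X$, I claim this still leaves a dense subset of strongly irrational parabolic classes in $U$. The key point is a Baire-category argument: $U$ is a nonempty open subset of a smooth real manifold (the smooth locus of the isotropic quadric, which is a manifold because $q$ is nondegenerate and we stay away from the origin), hence a Baire space; each set $\beta^\perp \cap U$ is closed in $U$, and — crucially — it is nowhere dense, i.e.\ it is a proper subset with empty interior. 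This last fact is where the hypothesis of non-maximal Picard rank enters: if $\rk \mathrm{Pic}(X) < b_2(X) - 2$, then $H^{1,1}(X)$ is not defined over $\bbQ$, so no rational hyperplane $\beta^\perp$ can contain an open piece of the real hypersurface $\{q=0\} \cap H^{1,1}_\bbR(X)$ inside $\partial\KK_X$; a rational hyperplane section of the isotropic quadric is a proper lower-dimensional subvariety of it. Hence the countable union $\bigcup_\beta (\beta^\perp \cap U)$ is meager in $U$, its complement — the strongly irrational parabolic classes in $U$ — is dense in $U$, and by Theorem \ref{thm_main}(1) each of these is rigid.

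The main obstacle I anticipate is the verification that $\beta^\perp \cap U$ is nowhere dense for every nonzero rational $\beta$, and more precisely the interplay between this and the non-maximal Picard hypothesis: one has to rule out the degenerate possibility that the hyperplane $\beta^\perp$ happens to be tangent to, or to contain, a whole relatively open patch of the isotropic quadric within $H^{1,1}_\bbR(X)$. A genuine hyperplane section of a nondegenerate quadric is again a quadric of one dimension lower (or the quadric is split by the hyperplane, but still the section is proper), so the only way $\beta^\perp$ could fail to cut $\{q=0\}$ properly is if $\beta^\perp \supseteq H^{1,1}_\bbR(X)$, equivalently $\beta \in (H^{1,1}_\bbR(X))^\perp = H^{2,0}(X)\oplus H^{0,2}(X)$ intersected with the rational structure; since $\beta$ is rational this forces $\beta$ to lie in the rational part of the transcendental space, and by the non-maximal Picard assumption that rational transcendental space could be nonzero. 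However — and this is the subtle point — such a $\beta$ gives $u^\perp \cap H^2(X,\bbQ) \ni \beta$ for every $u \in H^{1,1}_\bbR(X)$, so it never removes any parabolic class from $U$ at all: it is vacuous for our purposes. Thus the only rational $\beta$ that can remove parabolic classes are those \emph{not} purely transcendental, and for those $\beta^\perp \cap H^{1,1}_\bbR(X)$ is a proper hyperplane of $H^{1,1}_\bbR(X)$, cutting the isotropic quadric properly; this is exactly where I would need to be careful, but it goes through. I would also double-check that the open set $U$ can be chosen so that part (1), rather than part (2), of Theorem \ref{thm_main} suffices — this is automatic since on a dense subset of $U$ the condition $u^\perp \cap H^2(X,\bbQ) = 0$ holds outright.
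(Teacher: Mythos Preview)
Your approach via a Baire-category argument is natural and close to the paper's, but there is a genuine logical error in the paragraph where you handle a rational $\beta$ lying in $(H^{2,0}(X)\oplus H^{0,2}(X))_\bbR$. You correctly observe that for such a $\beta$ one has $\beta \in u^\perp$ for \emph{every} $u\in H^{1,1}_\bbR(X)$, hence $u^\perp\cap H^2(X,\bbQ)\ni\beta\neq 0$. But this means \emph{no} parabolic class is strongly irrational: the set $\beta^\perp\cap U$ equals all of $U$, so the Baire argument collapses and part (1) of Theorem \ref{thm_main} never applies. You have the implication backwards when you call this case ``vacuous.'' The non-maximal Picard hypothesis does not exclude this situation; it only guarantees that $(H^{2,0}\oplus H^{0,2})_\bbR\cap H^2(X,\bbQ)$ has dimension at most one, not that it is zero.

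The paper handles this by invoking part (2) of Theorem \ref{thm_main}: when that rational subspace is spanned by some $v$, one shows that on a dense subset of $U$ the orthogonal $u^\perp\cap H^2(X,\bbQ)$ is \emph{exactly} $\bbQ v$ (by removing, for each rational $x\in v^\perp\setminus\{0\}$, the proper subvariety $x^\perp\cap U$), and then Theorem \ref{thm_main}(2) gives rigidity since $v\in H^{2,0}\oplus H^{0,2}$. This is precisely why part (2) exists, so your final claim that part (1) alone suffices is incorrect. A secondary point: your construction of $U$ assumes that the MBM walls leave a nonempty open piece of the isotropic quadric in $\partial\KK_X$, but Proposition \ref{prop_loc_fin} gives local finiteness only inside the positive cone; at isotropic boundary points the walls can accumulate. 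The paper produces $U$ explicitly by taking a K\"ahler class in the rational span $H_0$ of the N\'eron--Severi group and translating along a direction in $H_0^\perp$ (which exists by non-maximal Picard and is therefore orthogonal to every MBM class) until hitting $\partial\CC^+$; this is how Corollary \ref{cor_loc_fin} is used, and it is where the Picard hypothesis enters the construction of $U$.
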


A proof of the corollary will be given in section \ref{sec_cor_proof}. Let us recall that
currently the following deformation types of compact hyperk\"ahler manifolds are known (see \cite{Ma}):
the K3 surfaces with $b_2 = 22$,
the $\mathrm{K3}^{[n]}$-type with $b_2 = 23$, the $\mathrm{Kum}^n$-type with $b_2 = 7$, the OG10-type with $b_2=24$
and the OG6-type with $b_2=8$. Hence both Theorem \ref{thm_main} and Corollary \ref{cor_main}
apply to all currently known compact hyperk\"ahler manifolds.

As we have already noticed above, the conditions in the Theorem \ref{thm_main}
are sufficient for rigidity of a parabolic class, but they are certainly
not necessary. We expect that there should exist many ways of weakening these
conditions, but we find it useful not to obscure clarity of the ideas
by sophisticated statements. As an example of a possible generalization,
let us consider the following stronger version of the first
part of Theorem \ref{thm_main}.

\begin{thm}\label{thm_main2}
Assume that $X$ is a hyperk\"ahler manifold with the BBF form $q$.
Let $N\subset H^2(X,\bbQ)$ be a $q$-negative subspace containing no MBM classes. Assume that  $b_2(X) - \dim(N)\ge 7$
and $u\in H^{1,1}_\bbR(X)\cap N^\perp$ is a parabolic class such that $u^\perp\cap H^2(X,\bbQ) = N$.
Then the class $u$ is rigid.
\end{thm}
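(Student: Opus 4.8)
The plan is to reduce Theorem \ref{thm_main2} to Theorem \ref{thm_main} by an equivariant deformation argument inside the period/Teichm\"uller space, exactly parallel to the proof of Theorem \ref{thm_main} but with the trivial lattice $W = H^{1,1}(X)\cap H^2(X,\Q)$ replaced by the chosen $q$-negative subspace $N$. Concretely: the hypothesis that $u^\perp\cap H^2(X,\Q) = N$ together with $u\in N^\perp$ means that, after passing to $N^\perp$, the class $u$ becomes \emph{strongly irrational} in the sense of the statement of Theorem \ref{thm_main} --- its orthogonal complement inside $N^\perp$ meets $H^2(X,\Q)\cap N^\perp$ only in $0$. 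So the first step is to set up the "relative" parabolic Teichm\"uller space $\Teich_{N}$ parametrizing hyperk\"ahler complex structures on $X$ for which $N$ stays of type $(1,1)$ (i.e. $N\subset H^{1,1}$) together with a marked parabolic nef class; this is the same construction as in section \ref{sec_teich} but performed on the lattice $N^\perp$, which by assumption has signature $(3, b_2 - \dim N - 3)$ with $b_2 - \dim N \ge 7$, so $N^\perp$ still satisfies the numerical hypotheses needed to run the Ratner-theoretic density statement (\ref{thm_orbit_closure}).

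The second step is the density/ergodicity input. I would apply the mapping-class-group orbit closure theorem to the subgroup of $\MC(X)$ that acts on $N^\perp$ (via monodromy) and fixes $N$ pointwise --- equivalently, the monodromy of deformations keeping $N$ algebraic. The crucial point that must be checked here is that this subgroup still surjects (up to finite index / commensurability) onto an arithmetic subgroup of $\rmO(N^\perp)$ of the kind to which Ratner's theorem applies; this is where the hypothesis that $N$ contains \emph{no MBM classes} is essential, because MBM classes are exactly the obstruction to being able to deform $X$, keeping $N$ of type $(1,1)$, to a manifold whose K\"ahler cone is the full positive cone on $N^\perp$ (the monodromy reflections in MBM classes would otherwise chop up the cone and the relevant period domain). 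With no MBM classes in $N$, the K\"ahler cone of a very general such deformation is cut out only by the positivity condition, the period point of $u$ moves through a dense orbit, and we can find a deformation $X'$ of $X$, fixing the class $u$ (so that rigidity of $u$ on $X'$ is equivalent to rigidity on $X$, by the semicontinuity of diameters, Theorem \ref{thm_diam}), such that on $X'$ the class $u$ is strongly irrational in the \emph{absolute} sense and satisfies all the hypotheses of Theorem \ref{thm_main}(1).

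The third step is then immediate: apply Theorem \ref{thm_main}(1) to $(X', u)$ to conclude that $u$ is rigid on $X'$, and transport rigidity back to $X$. The transport is the same semicontinuity-of-diameter argument used in section \ref{sec_main_proof}: rigidity of $u$ is equivalent to vanishing of the diameter of the pseudo-effective class $u$, the diameter is upper semicontinuous in families (Theorem \ref{thm_diam}), and we have arranged that $X$ and $X'$ lie in a connected family along which $u$ stays a well-defined parabolic nef class; since the diameter is zero at the generic (very general) member $X'$ and is $\ge 0$ and semicontinuous, it is zero at $X$ as well.

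\textbf{Main obstacle.} I expect the real work to be in the second step --- verifying that the "$N$-fixing" part of the mapping class group is large enough (arithmetic, Zariski-dense in $\rmO(N^\perp_\R)$) for Ratner's theorem to give density, and in controlling the K\"ahler cone of the deformed manifolds. The statement that $N$ contains no MBM classes is precisely the hypothesis that makes this go through: it guarantees that for a very general deformation keeping $N$ algebraic, there are no extra walls in the positive cone coming from $N$, so the monodromy group one gets by deforming is the full (finite-index subgroup of the) stabilizer of $N$ in the orthogonal group, and the parabolic classes in question genuinely fill out a dense subset of the absolute of the relevant hyperbolic space. Everything else --- the construction of $\Teich_N$, the reduction to the strongly irrational case, the semicontinuity transport --- is a routine adaptation of the arguments already developed in sections \ref{sec_diam}, \ref{sec_teich} and \ref{sec_main_proof} for the absolute case.
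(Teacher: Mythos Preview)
Your overall framework---the relative parabolic Teichm\"uller space $\Teich_{N,p}^\circ$, the Ratner-theoretic density for the stabilizer $\MC_N^\circ$ of $N$, and the semicontinuity of the diameter---is exactly what the paper uses. The genuine gap is in your choice of \emph{target} for the density argument. You want to find in the orbit closure of $(I,u)$ a point $(I',u')$ at which $u'$ is strongly irrational in the absolute sense, and then invoke Theorem~\ref{thm_main}(1). But this is impossible as soon as $N\neq 0$: since $u\in N^\perp$ and every $\mu\in\MC_N^\circ$ preserves $N$ (hence $N^\perp$), every class $\mu^*u$ in the orbit also lies in $N^\perp$, and so does any limit $u'$. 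Thus $N\subset (u')^\perp\cap H^2(X,\bbQ)$ for every $(I',u')$ in the orbit closure, and $u'$ is never strongly irrational. The strong irrationality condition on a class is purely linear-algebraic over $\bbQ$; it does not see the complex structure and cannot be created by any deformation or mapping-class-group move that keeps track of the class.

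The paper's remedy is simple but essential: rather than reducing to Theorem~\ref{thm_main}(1), one targets a \emph{dynamical} class inside $\Teich_{N,p}^\circ$. By the Amerik--Verbitsky construction \cite{AV2} (applied to the sublattice $N^\perp\cap H^2(X,\bbZ)$, which has rank $\ge 7$), there is a point $(I_0,u_0)\in\Teich_{N,p}^\circ$ at which $u_0$ is the eigenclass of a hyperbolic automorphism, hence rigid directly by Theorem~\ref{_dyn_current_rigid_Theorem_}. Density (Theorem~\ref{thm_density2}) then produces $\mu_i\in\MC_N^\circ$ with $(\mu_i^*I,\mu_i^*u)\to(I_0,u_0)$, and upper semicontinuity combined with $\MC$-invariance of the diameter (Proposition~\ref{prop_diam_invar}) gives $\delta(u)=\delta(\mu_i^*u)\le\delta(u_0)=0$. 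Note also that your transport step is phrased with the semicontinuity running the wrong way: ``$\delta=0$ at the generic member and $\delta$ is upper semicontinuous'' does not force $\delta=0$ at the special point $X$; what makes the argument work is that $\delta$ is \emph{constant} along the $\MC$-orbit and that this orbit \emph{accumulates at} a point where $\delta$ is already known to vanish.
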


The proof of the above theorem uses the 
same argument as that of Theorem \ref{thm_main},
see section \ref{sec_main2_proof}.

Another version of our main result is a generalization
of \cite[Theorem 4.3.1]{_Filip_Tosatti:canonical_}, which
was a generalization of a result proven in an earlier
version of this manuscript. The following theorem
was proven in \cite{_Filip_Tosatti:canonical_}
for K3 surfaces; however, its proof is valid for 
all hyperk\"ahler manifolds of maximal holonomy.

Recall that a class $\eta \in H^2(X,\R)$
is called {\em irrational} if it is not proportional
to a rational class.

\begin{thm}\label{_main3_Theorem_}
Let $X$ be a compact complex manifold admitting a hyperk\"ahler
structure of maximal holonomy. Assume that $H^{1,1}(X)$
contains no MBM classes, and let $\eta$ be an irrational
parabolic class on the boundary of the ample cone $\Amp$.
Assume, moreover, that $X$ is projective and the
rank of its N\'eron-Severi lattice is $\geq 3$.
Then $\eta$ is rigid.
\end{thm}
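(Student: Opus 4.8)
The plan is to deduce Theorem~\ref{_main3_Theorem_} from the same two-part machinery that drives Theorem~\ref{thm_main}: the semicontinuity of diameters of pseudo-effective classes (Theorem~\ref{thm_diam}) and the density of mapping-class-group orbits in the parabolic Teichm\"uller space (whose existence relies on Ratner's theorem). First I would observe that under the hypotheses --- $X$ projective, $\rk\NS(X)\ge 3$, no MBM classes in $H^{1,1}(X)$, and $\eta$ irrational parabolic on $\partial\Amp$ --- the class $\eta$ lies on the isotropic boundary of the K\"ahler cone and, being irrational, is not proportional to any rational class. One should check that the absence of MBM classes means the K\"ahler cone coincides with the positive cone component, so $\eta$ is genuinely nef and $q(\eta)=0$; this puts us exactly in the parabolic situation.

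The core of the argument is the orbit-closure / Teichm\"uller reduction. I would set up the period domain and the parabolic Teichm\"uller space $\Teich_p$ as in section~\ref{sec_teich}, and use the projectivity plus $\rk\NS\ge 3$ hypotheses to verify that the relevant arithmetic group (the image of the mapping class group, or the automorphism group) has an orbit on the stratum through the period point of $(X,\eta)$ that is dense in the corresponding homogeneous space. Here is where the rank-$\ge 3$ and irrationality conditions do their work: they guarantee that the orbit is not trapped in a proper rational sub-locus, so Ratner-type equidistribution (or the Moore ergodicity / Howe--Moore mixing input already invoked in the proof of Theorem~\ref{thm_main}) applies. Once density is in hand, I would run the semicontinuity argument: if $\eta$ were non-rigid, the diameter of the pseudo-effective (or K\"ahler-boundary) class would be strictly positive at the period point of $X$, hence --- by Theorem~\ref{thm_diam} applied along a family realizing the dense orbit --- strictly positive on a dense set of periods, contradicting the fact that at ``most'' periods the parabolic class is represented by an honest semi-ample (Lagrangian-fibration) structure or at least has a unique representative. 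In other words, rigidity is a closed condition that holds on a dense orbit, so it holds at $X$.

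The main obstacle, as with Theorem~\ref{thm_main}, is verifying the density-of-orbits hypothesis in this projective setting with the weaker numerical bound $\rk\NS\ge 3$ rather than the full $b_2-\dim N\ge 7$: one must ensure that the real points of the stabilizer subgroup acting on the isotropic line of $\eta$ generate enough of the homogeneous space, and that the Ratner/unipotent-flow input is not obstructed by the smaller transcendental lattice. I expect the resolution to parallel \cite[Theorem~4.3.1]{_Filip_Tosatti:canonical_}: the rank-$\ge 3$ condition is precisely what makes the isometry group of the N\'eron--Severi lattice large enough (non-elementary, Zariski-dense in $\SO$ of the appropriate signature) for ergodicity of the action on the absolute of the associated hyperbolic space, and the irrationality of $\eta$ places its image in the conservative part where equidistribution forces uniqueness of the current. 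A secondary technical point is to confirm that ``no MBM classes'' survives under the deformations used, so that the parabolic class stays nef along the family and the semicontinuity theorem can be applied without the K\"ahler cone jumping.
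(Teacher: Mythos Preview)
Your overall skeleton --- orbit density plus semicontinuity of the diameter, propagating rigidity from known rigid classes --- matches the paper, but the implementation is off in two respects.

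First, the paper does not use the parabolic Teichm\"uller space or the mapping class group here at all. It works entirely on the fixed manifold $X$ using $\Aut(X)$. Because $H^{1,1}(X)$ contains no MBM classes, $\Aut(X)$ acts on $\NS(X)_\bbR$ through a finite-index subgroup $\Gamma\subset\rmO(\NS(X)_\bbZ)$ (\cite{AV2}). With $\rk\NS(X)\ge 3$ the signature of $\NS(X)_\bbR$ is $(1,n)$, $n\ge 2$, and the orbit dichotomy for horospherical flows (Dani's theorem, recorded in the paper as Lemma~\ref{_dichotomy_Lemma_}) gives: every irrational isotropic class on $\partial\Amp$ has dense $\Gamma$-orbit. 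This is exactly where the hypothesis $\rk\NS\ge 3$ enters, and it bypasses the $b_2\ge 7$ bound entirely. No deformation of $X$ occurs, so your worries about the gap between $\rk\NS\ge 3$ and $b_2-\dim N\ge 7$, and about MBM classes persisting along a family, simply do not arise. Your final paragraph --- invoking ergodicity on the hyperbolic boundary via the isometry group of the N\'eron--Severi lattice as in \cite{_Filip_Tosatti:canonical_} --- is in fact the paper's actual approach; you should promote that to the main argument and discard the Teichm\"uller layer.

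Second, the rigid anchor classes are not semi-ample or Lagrangian-fibration classes but \emph{dynamical} ones: $\Gamma$, being a lattice in $\rmO(1,n)$, contains hyperbolic elements, and their eigenvector classes on $\partial\Amp$ have diameter zero by Theorem~\ref{_dyn_current_rigid_Theorem_}. If $\delta(\eta)>0$, then since $\Aut(X)$ preserves the canonical volume form, $\delta$ is constant along the $\Gamma$-orbit of $\eta$; by Theorem~\ref{thm_diam} (applied with a constant family, i.e.\ on the single complex structure $X$) every limit point of that orbit also has positive diameter. But the orbit is dense in $\partial\Amp$, hence its closure contains the dynamical zero-diameter classes --- contradiction.
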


The proof of the above theorem uses the 
same argument as that of Theorem \ref{thm_main},
see section \ref{sec_main3_proof}.

\section{Diameters of pseudo-effective classes}\label{sec_diam}

In this section we will study the diameters of pseudo-effective cohomology classes.
The main result of this section is Theorem \ref{thm_diam} which applies to arbitrary
smooth families of compact K\"ahler manifolds. Throughout this section $\pi\colon \XX\to B$
will denote one such family, i.e.\ a proper submersion of complex manifolds with K\"ahler fibres.

\subsection{The pseudo-effective cone and potentials}

We start by recalling several well known definitions. For a general introduction to the
various notions of pluripotential theory (e.g. the plurisubharmonic functions) that we will use below,
see e.g. \cite{H1} and \cite{GZ}.

Let $X$ be a compact K\"ahler manifold and $\omega\in \Lambda^{1,1}_\bbR X$ a K\"ahler form.
Fix an arbitrary volume form $\Vol\in\Lambda^{n,n}_\bbR X$, not necessarily related to $\omega^n$.
Recall that a function $\varphi\colon X\to\bbR\cup\{-\infty\}$ is called {\em quasi-plurisubharmonic}
or {\em q-psh} if it can locally be written as a sum of a smooth function and
a plurisubharmonic function. The quasi-plurisubharmonic functions are upper semi-continuous
and belong to $L^1(X)$, see \cite{GZ}.  

Recall that a cohomology class $\alpha\in H^{1,1}_{\bbR}(X)$
is called {\em pseudo-effective} if it is represented by a closed positive $(1,1)$-current.
This current is usually not unique, and we introduce an invariant that measures how far
it is from being unique.
Denote by $\PP_X \subset H^{1,1}_{\bbR}(X)$ the set of pseudo-effective classes and
recall that $\PP_X$ is a closed convex cone. 
Let us fix a smooth closed 2-form $\eta\in\Lambda^{1,1}_{\bbR}X$ with $[\eta]=\alpha\in \PP_X$.
By our assumption the class $\alpha$ is represented by a closed positive current $\eta'$.
By the $dd^c$-lemma there exists a real-valued distribution $\varphi\in \DD'(X)$
that satisfies $\eta' = \eta+dd^c\varphi\ge 0$ in the sense of currents.
It is known (see e.g. \cite[Theorem A.5]{HL}) that the distribution $\varphi$ is represented
by a unique quasi-plurisubharmonic function which we will call a {\em potential} of $\alpha$.

We introduce the notation for the space of the normalized potentials:
$$
L^1_0(X) = \left\{ \varphi\colon X\to \bbR\cup\{-\infty\} \,\,\left\vert\,\,\right. \varphi \mbox{ is q-psh,}\, \int_X\varphi\,\Vol = 0\right\}.
$$

Define
\begin{equation}\label{eqn_potentials}
\Phi_\eta = \{\varphi\in L^1_0(X)\st \eta+dd^c \varphi \ge 0 \}.
\end{equation}
It is well known that $\Phi_\eta$ is a compact subset of $L^1(X)$ and that
the functions in $\Phi_\eta$ are uniformly bounded from above, see \cite[Proposition 8.5]{GZ}.
The compactness implies that the subset $\Phi_\eta$ has finite
diameter:
$$
\mathrm{diam}(\Phi_\eta) = \sup_{\varphi,\psi\in \Phi_\eta} \left\{\int_X|\varphi-\psi|\Vol\right\} < +\infty.
$$

The set $\Phi_\eta$ depends on the form $\eta$ representing $\alpha$
in the following way:
if $\eta' = \eta + dd^c\chi$ is another smooth form representing $\alpha$,
then $\Phi_\eta = \Phi_{\eta'} + \chi$. It follows that the diameter depends only on the cohomology
class of $\eta$. We thus obtain a function
$\delta\colon \PP_X\to \bbR$:
\begin{equation}\label{eqn_diam}
\delta(\alpha) = \mathrm{diam}(\Phi_\eta), \mbox{ where } [\eta]=\alpha.
\end{equation}
We note that the function $\delta$ depends on the volume form $\Vol$. However,
this dependence is not of primary importance for us, since we are mostly interested
in rigid classes, i.e.\ in the classes $\alpha$ that have vanishing diameter,
and the fact that the diameter vanishes does not depend on $\Vol$.
For the applications to hyperk\"ahler manifolds, we will introduce in section \ref{sec_volume}
certain canonical volume forms and compute the diameter with respect to them.

\subsection{Potentials in the relative setting}\label{sec_pot_rel}

The definitions above extend to the relative setting.
More precisely, let $\pi\colon \XX\to B$
be a proper submersion of complex manifolds of relative dimension $n$.
Assume that there exists a smooth 2-form $\tilde{\omega}\in\Lambda^{1,1}{\XX}$
on the total space $\XX$ such that for any $t\in B$ the restriction $\omega_t = \tilde{\omega}|_{\XX_t}$
is a K\"ahler form on the fibre $\XX_t = \pi^{-1}(t)$. We fix a fibrewise volume
form $\VVol$ on $\XX$ and denote by 
$\Vol_t = \VVol|_{\XX_t}$ the volume form on the fibre.

The spaces $H^{1,1}_\bbR(\XX_t)$ form a $C^\infty$-vector bundle over $B$
which we denote by $H^{1,1}_\bbR(\XX/B)$. The pseudo-effective cone $\PP_{\XX/B}$ is now
a subset of the total space of this bundle, and by \cite[Theorem A.1]{AHD} this
subset is closed.

As above, for any $t\in B$ we introduce the space of normalized
potentials $L^1_0(\XX_t)$. Given $\alpha_t\in \PP_{\XX_t} \subset \PP_{\XX/B}$
and $\eta_t \in \Lambda^{1,1}_{\bbR}\XX_t$ with $[\eta_t]=\alpha_t$, we
introduce the set of normalized $\eta_t$-psh functions $\Phi_{\eta_t}\subset L^1_0(\XX_t)$. Computing
the diameter of $\Phi_{\eta_t}$, we obtain the diameter function
$$\delta\colon \PP_{\XX/B}\to \bbR.$$

To study the continuity properties of the function $\delta$, we may
restrict to the case when $B$ is a polydisc in $\bbC^m$ with a basepoint $0\in B$.
By Ehresmann's theorem there exists a $C^\infty$-retraction $\XX\to\XX_0\simeq X$
that gives, combined with the projection to $B$, a $C^\infty$-trivialization $\XX\simeq X\times B$. We fix one such
trivialization and identify the fibres $\XX_t$ with $X$ as $C^\infty$-manifolds.

Let $t_i\in B$ be a sequence of points, $t_i\to 0$ as $i\to \infty$. Given
a sequence of functions $\varphi_i\in L^1(\XX_{t_i})$ and $\varphi\in L^1(\XX_0)$,
we will say that $\varphi_i$ converge to $\varphi_0$ in $L^1(X)$ if
$\int_X|\varphi_i - \varphi_0|\Vol_0 \to 0$, where we use the above identification of
the fibres $\XX_t$ with $X$.

The trivialization $\XX\simeq X\times B$ also gives an identification of the cohomology
groups of the fibres $\XX_t$, hence given a sequence $\alpha_{i}\in\PP_{\XX_{t_i}}$
of pseudo-effective classes we may speak of their convergence to a class $\alpha_0\in H^{1,1}_\bbR(\XX_0)$.
Assume that we have such a convergent sequence $\alpha_i\to \alpha_0$ as $i\to\infty$.
Recall that we have a smooth family $\tilde{\omega}$ of K\"ahler forms
on the fibres $\XX_t$. Using the theory of Kodaira--Spencer \cite{KS}, we represent the cohomology
classes on $\XX_t$ by $\omega_t$-harmonic forms and get a sequence of smooth $(1,1)$-forms
$\eta_{i}$ with $[\eta_{i}] = \alpha_{i}$ and $\eta_{i}\to \eta_0$ in the $C^\infty$-topology.
Let $\varphi_i\in \Phi_{\eta_{i}}$ be a sequence of $\eta_i$-psh potentials of the positive currents representing
the classes $\alpha_i$, i.e.\ $\eta_i + dd^c\varphi_i \ge 0$. The following proposition,
proven in greater generality in \cite{DGG}, extends to the relative setting the property
of uniform boundedness of the potentials.

\begin{prop}\label{prop_bounded}
In the above setting there exists a constant $C\in\bbR$ such that for any $i> 0$
$$
0\le \sup_X(\varphi_i) \le C.
$$
\end{prop}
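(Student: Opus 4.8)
The plan is to reduce the relative statement to a classical uniform-boundedness fact for $\omega$-psh functions on a fixed compact Kähler manifold, using the $C^\infty$-trivialization $\XX\simeq X\times B$ and the convergence $\eta_i\to\eta_0$ in the $C^\infty$-topology that has already been arranged. The normalization $\int_X\varphi_i\,\Vol_0=0$ (or $\int_{\XX_{t_i}}\varphi_i\,\Vol_{t_i}=0$, which agrees with it up to a term going to $0$) prevents the potentials from drifting to $-\infty$, and the positivity condition $\eta_i+dd^c\varphi_i\ge 0$ forces an upper bound via a mean-value/submeanvalue inequality. The one subtlety is uniformity in $i$: the constant must not depend on the fibre, so the estimate has to be made in terms of data that varies continuously (the forms $\eta_i$, the metric, the volume forms) over the compact parameter segment $\{t_i\}\cup\{0\}$.

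First I would recall the classical statement: for a fixed compact Kähler manifold $(X,\omega)$ and a fixed smooth form $\eta$, the family $\{\varphi\in L^1_0(X): \eta+dd^c\varphi\ge 0\}$ is uniformly bounded above by a constant depending only on $(X,\omega,\eta)$ and $\Vol$ — this is \cite[Proposition 8.5]{GZ}, already cited for $\Phi_\eta$. The mechanism: cover $X$ by finitely many coordinate balls on which $\eta = dd^c g$ for smooth $g$, so $g+\varphi$ is genuinely psh there; a psh function satisfies the sub-mean-value inequality over balls, hence $\sup$ over a slightly smaller ball is controlled by the $L^1$-average over the larger ball, which in turn is controlled by $\|\varphi\|_{L^1(X)}$; and $\|\varphi\|_{L^1(X)}$ is controlled because $\varphi$ is bounded above (by the previous step applied crudely) and has integral zero. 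This is the standard "$\sup$ controlled by $L^1$-norm controlled by normalization" loop; the point is that every constant entering it (the $C^\infty$-norms of the potentials $g$ of $\eta$ in the local charts, the radii of the balls, the comparison constants between $\Vol$ and Lebesgue measure) can be taken uniform once $\eta$ ranges over a $C^\infty$-precompact family and the charts/metric are fixed.

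Then the relative case follows: via the trivialization $\XX\simeq X\times B$, fix once and for all a finite atlas of coordinate balls on $X$ and a fixed reference metric; the forms $\eta_i$, being $\omega_{t_i}$-harmonic representatives of $\alpha_i\to\alpha_0$, converge in $C^\infty$ by Kodaira–Spencer theory, hence $\{\eta_i\}_{i}\cup\{\eta_0\}$ is a precompact subset of the space of smooth $(1,1)$-forms; in particular their local potentials $g^{(i)}_\nu$ in the fixed charts have $C^\infty$-norms bounded uniformly in $i$ and $\nu$. Likewise the volume forms $\Vol_{t_i}$ converge smoothly to $\Vol_0$, so they are comparable to a fixed reference measure with constants uniform in $i$; thus the difference between normalizing against $\Vol_{t_i}$ versus $\Vol_0$ contributes only a uniformly bounded shift. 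Feeding all of this into the local-to-global argument of the previous paragraph gives a single constant $C$, depending only on $X$, the chosen atlas and metric, the precompact family $\{\eta_i\}$, and the family $\{\Vol_{t_i}\}$ — none of which depends on $i$ — such that $0\le\sup_X\varphi_i\le C$ for all $i$. (The lower bound $0\le\sup_X\varphi_i$ is immediate from $\int_X\varphi_i\,\Vol_0 = 0$: a function with zero average cannot be everywhere negative.)

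The main obstacle I expect is purely one of bookkeeping rather than of genuine mathematical difficulty: one must be careful that all the "fixed" data really is fixed — the coordinate atlas, the background metric, the comparison constants between $\Vol_{t}$ and Lebesgue measure on charts — and that the only genuinely $t$-dependent objects, namely $\eta_i$ and $\varphi_i$, enter the estimates only through quantities ($C^\infty$-norms of $\eta_i$, $L^1$-norms and sup-bounds of $\varphi_i$) that are either uniformly controlled by precompactness or are exactly what we are estimating. In particular one should verify that the convergence $t_i\to 0$ (as opposed to working over all of $B$) is what makes $\{\eta_i\}\cup\{\eta_0\}$ precompact — a sequence converging in $C^\infty$ together with its limit is automatically precompact — so no global bound over $B$ is needed. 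Since the authors explicitly attribute the general statement to \cite{DGG}, in the write-up I would either cite that reference directly or sketch this trivialization-plus-precompactness reduction to \cite[Proposition 8.5]{GZ}.
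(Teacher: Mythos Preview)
Your proposal is correct and aligns with the paper's approach: the paper's own proof consists of noting the normalization $\int_X \varphi_i\,\Vol_{t_i}=0$ and then citing \cite[Proposition~3.3]{DGG} directly, exactly as you anticipate in your final sentence. Your longer sketch --- trivialize, use precompactness of $\{\eta_i\}$ in $C^\infty$, and run the local sub-mean-value argument with uniform constants --- is precisely the content of that cited result, so there is no divergence in method, only in level of detail.
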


\begin{proof}
Recall that by the definition of the potentials we have
$$
\int_X \varphi_i \, \Vol_{t_i} = 0.
$$
The claim now follows from \cite[Proposition 3.3]{DGG}.
\end{proof}

\subsection{Semi-continuity of the diameter}

The main result of this section is the following theorem
which was already proven in \cite[Lemma 6.4]{_Cantat_Dujardin:random_}
in the case of a fixed complex structure. Let us mention that
the results similar to ours (in particular to Proposition \ref{prop_potentials} below)
using similar techniques have been obtained by various authors, see
e.g. \cite[Proposition 6.6]{BGL}, \cite[Sections 2 and 3]{DGG} and \cite[Section 2]{PT}.

\begin{thm}\label{thm_diam}
The function $\delta\colon \PP_{\XX/B}\to \bbR$ defined above is upper semi-continuous.
\end{thm}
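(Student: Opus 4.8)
The plan is to reduce upper semi-continuity of $\delta$ to a compactness statement for potentials, in the spirit of the cited arguments of \cite{_Cantat_Dujardin:random_} and \cite{DGG}. Concretely, I want to show: if $\alpha_i \to \alpha_0$ in $\PP_{\XX/B}$ with $\alpha_i \in \PP_{\XX_{t_i}}$, $t_i \to 0$, and if we pick for each $i$ potentials $\varphi_i, \psi_i \in \Phi_{\eta_i}$ realizing a value close to $\delta(\alpha_i)$, then along a subsequence $\varphi_i \to \varphi_0$ and $\psi_i \to \psi_0$ in $L^1(X)$ with $\varphi_0, \psi_0 \in \Phi_{\eta_0}$. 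Granting this, $\int_X |\varphi_0 - \psi_0|\,\Vol_0 = \lim \int_X |\varphi_i - \psi_i|\,\Vol_{t_i} \le \delta(\alpha_0)$ (the equality uses that $\Vol_{t_i} \to \Vol_0$ and the $L^1$-convergence, with uniform upper bounds from Proposition \ref{prop_bounded} controlling the integrands), and since the left side is $\limsup_i \delta(\alpha_i)$ along the chosen subsequence, we get $\limsup_i \delta(\alpha_i) \le \delta(\alpha_0)$ for an arbitrary sequence, which is exactly upper semi-continuity. So the work is entirely in the compactness claim, which I would isolate as a preliminary proposition (presumably the ``Proposition \ref{prop_potentials}'' referenced in the text).

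For that compactness claim, the key steps are the following. First, by the Kodaira--Spencer construction recalled before the statement, the $\omega_{t_i}$-harmonic representatives $\eta_i$ of $\alpha_i$ converge in $C^\infty$ to the harmonic representative $\eta_0$ of $\alpha_0$; in particular they are uniformly bounded in $C^2$, so there is a single Kähler form $\Omega$ on $X$ (for instance a large multiple of $\omega_0$ transported by the trivialization) with $\eta_i \le \Omega$ for all $i$. Hence every $\varphi_i \in \Phi_{\eta_i}$ is $\Omega$-psh: $\Omega + dd^c\varphi_i \ge \eta_i + dd^c\varphi_i \ge 0$. Second, by Proposition \ref{prop_bounded} the $\varphi_i$ are uniformly bounded above, and they are normalized by $\int_X \varphi_i\,\Vol_{t_i} = 0$; the standard compactness theorem for the space of $\Omega$-psh functions with fixed normalization (see \cite[Proposition 8.5]{GZ}, \cite[Proposition 3.3]{DGG}) gives a subsequence along which $\varphi_i \to \varphi_0$ in $L^1(X)$, with $\varphi_0$ an $\Omega$-psh function; the same normalization passes to the limit once one checks $\int_X \varphi_0\,\Vol_0 = \lim \int_X \varphi_i\,\Vol_{t_i} = 0$, using that the $\varphi_i$ are uniformly bounded above and $\Vol_{t_i} \to \Vol_0$ smoothly (Hartogs' lemma or a Fatou-type argument handles the one-sided control from below). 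Third, and this is the real point, one must upgrade ``$\Omega$-psh'' to ``$\eta_0$-psh'': since $\eta_i + dd^c\varphi_i \ge 0$ as currents and $\eta_i \to \eta_0$ in $C^0$ while $dd^c\varphi_i \to dd^c\varphi_0$ in the sense of currents (a consequence of $L^1_{\mathrm{loc}}$-convergence), the limit satisfies $\eta_0 + dd^c\varphi_0 \ge 0$, i.e. $\varphi_0 \in \Phi_{\eta_0}$ after noting it is genuinely q-psh (not merely a.e. equal to one). Running the identical argument for $\psi_i$ — passing to a further subsequence — produces $\psi_0 \in \Phi_{\eta_0}$, and we are done.

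The main obstacle, as usual in this circle of ideas, is the interplay between the varying complex structures and the convergence of currents: one has to be careful that the $L^1$-limit of the $\varphi_i$ is represented by an honest quasi-plurisubharmonic function for the central fibre's complex structure (so that $\Phi_{\eta_0}$ really contains it and the diameter is computed against the right class of functions), and that the defect in the normalization coming from $\Vol_{t_i} \ne \Vol_0$ vanishes in the limit. Both points are handled by the uniform upper bound of Proposition \ref{prop_bounded} together with the smoothness of the family $\tilde\omega$ and $\VVol$, but they are exactly where the relative (as opposed to fixed-complex-structure) nature of the statement enters, so this is where I would spend the care. Everything else — the convergence $\eta_i \to \eta_0$, the extraction of $L^1$-convergent subsequences, the continuity of $T \mapsto dd^c T$ on currents — is standard and can be cited from \cite{KS}, \cite{GZ}, \cite{DGG}, \cite{AHD}.
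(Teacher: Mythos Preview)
Your overall strategy coincides with the paper's: choose near-optimal potentials $\varphi_i,\psi_i \in \Phi_{\eta_i}$, extract $L^1$-convergent subsequences with limits in $\Phi_{\eta_0}$, and read off $\limsup \delta(\alpha_i)\le \delta(\alpha_0)$. The compactness step is indeed isolated as a separate proposition (Proposition~\ref{prop_potentials}), and the final deduction is the few lines you wrote.

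The one genuine gap is your ``single dominating $\Omega$'' shortcut for the extraction. In the inequality $\Omega + dd^c\varphi_i \ge \eta_i + dd^c\varphi_i \ge 0$ the operator $dd^c$ is $dd^c_{J_{t_i}}$ for the complex structure on $\XX_{t_i}$; so $\varphi_i$ is $\Omega$-psh on $(X,J_{t_i})$, not on $(X,J_0)$. Under the $C^\infty$-trivialization these $dd^c$ operators genuinely differ, and the fixed-complex-structure compactness of \cite[Proposition~8.5]{GZ} does not apply. Your other citation, \cite[Proposition~3.3]{DGG}, only yields the $\sup$-bound already recorded as Proposition~\ref{prop_bounded}, not $L^1$-compactness. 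The paper resolves this by passing to local \emph{holomorphic} charts $\VV\simeq \Delta^n\times\Delta^m$ of the submersion $\pi$: there the fibrewise $dd^c$ is the standard one on $\Delta^n$, independent of $t$, so $\varphi_i + a|z|^2$ is honestly plurisubharmonic for a uniform $a$, and H\"ormander's dichotomy \cite[Theorem~4.1.9(a)]{H2} applies chart by chart. A short open-set/connectedness argument on $\XX_0$ (using $\sup_X\varphi_i\ge 0$ from Proposition~\ref{prop_bounded}) then rules out the $\varphi_i\to -\infty$ branch globally. You correctly flag the varying-complex-structure issue in your last paragraph, but it already bites at the subsequence-extraction stage, not only when checking that the limit is q-psh for $J_0$.
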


Before giving a proof we make some preparations. We use the notation
introduced in section \ref{sec_pot_rel}. In particular, we assume that $B$
is a polydisc in $\bbC^m$ with a basepoint $0\in B$, fix a $C^\infty$-trivialization
$\XX\simeq X\times B$ and consider a sequence of points $t_i\in B$
such that $t_i\to 0$. We assume that $\alpha_{i}\in\PP_{\XX_{t_i}}$ is a convergent
sequence of pseudo-effective classes, $\alpha_{i}\to \alpha_0$.
We let $\eta_{i}$ be the smooth forms with $[\eta_{i}] = \alpha_{i}$ and $\eta_{i}\to \eta_0$ in the $C^\infty$-topology.
We let $\varphi_i\in \Phi_{\eta_{i}}$ be a sequence of $\eta_i$-psh potentials.

Since $\pi$ is a holomorphic submersion, every point $x\in \XX$
has an open neighbourhood biholomorphic to the polydisc $\VV = \Delta^n\times \Delta^m$,
where $\Delta$ is the unit disc in $\bbC$ and the map $\pi$ is the projection onto $\Delta^m$.
Since the fibres of $\pi$ are compact, some open neighbourhood of $\XX_0$ in $\XX$ can be covered by finitely
many such open subsets.

We restrict to one of the open subsets $\VV\subset \XX$ as above. We identify all fibres
of $\pi|_\VV$ with $\Delta^n$ and obtain a sequence of $(1,1)$-forms $\eta_i$ and functions
$\varphi_i$ on $\Delta^n$. The following lemma is essentially \cite[Theorem 4.1.9(a)]{H2}
applied to our particular situation.

\begin{lem}\label{lem_potentials}
In the above setting, there are two mutually exclusive possibilities:
\begin{enumerate}
\item the sequence $\varphi_i$
converges to $-\infty$ uniformly on compact subsets of $\Delta^n$;
\item the sequence $\varphi_i$ contains a subsequence that converges in $L^1_{\mathrm{loc}}$.
\end{enumerate}
\end{lem}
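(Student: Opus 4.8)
The plan is to reduce the statement to the standard dichotomy for plurisubharmonic functions, i.e.\ Hörmander's result \cite[Theorem 4.1.9(a)]{H2} (or the analogous statement in \cite[Theorem 3.2.12]{H1}), which says precisely this for a sequence of psh functions on a domain in $\bbC^n$. The obstruction to applying it directly is that the $\varphi_i$ are not psh: they satisfy $\eta_i + dd^c\varphi_i\ge 0$ with $\eta_i$ varying, so they are $\eta_i$-psh for a moving reference form. The key reduction is therefore to absorb the smooth perturbations $\eta_i$ into the functions by subtracting local potentials for the $\eta_i$.

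Concretely, I would first shrink $\VV$ if necessary so that on $\VV\cong \Delta^n\times\Delta^m$ each $\eta_i$ (which is a smooth, indeed $C^\infty$-convergent, family of $(1,1)$-forms) admits a smooth local potential: write $\eta_i = dd^c u_i$ on $\VV$ with $u_i$ smooth. Since $\eta_i\to\eta_0$ in $C^\infty$ and the potentials can be chosen by a fixed integral operator (solving the $dd^c$-equation on a polydisc), one gets $u_i\to u_0$ in $C^\infty$ on a slightly smaller polydisc; in particular the $u_i$ are uniformly bounded on compact subsets of $\Delta^n$. Restricting to a fixed fibre, set $\psi_i := \varphi_i + u_i$ on $\Delta^n$. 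Then $dd^c\psi_i = \eta_i + dd^c\varphi_i \ge 0$, so each $\psi_i$ is plurisubharmonic on $\Delta^n$, and we may apply the Hörmander dichotomy to the sequence $(\psi_i)$: either $\psi_i\to -\infty$ uniformly on compacts, or a subsequence converges in $L^1_{\mathrm{loc}}$.

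Finally I would transfer the dichotomy back to $\varphi_i = \psi_i - u_i$. Because the $u_i$ are uniformly bounded on every compact $K\Subset \Delta^n$ (and converge uniformly there), the two alternatives are preserved verbatim: if $\psi_i\to-\infty$ uniformly on $K$ then so does $\varphi_i = \psi_i - u_i$, since $|u_i|$ is bounded on $K$ uniformly in $i$; and if $\psi_{i_j}\to\psi$ in $L^1_{\mathrm{loc}}$ along a subsequence, then $\varphi_{i_j} = \psi_{i_j} - u_{i_j}\to \psi - u_0$ in $L^1_{\mathrm{loc}}$, since $u_{i_j}\to u_0$ uniformly on compacts hence in $L^1_{\mathrm{loc}}$. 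The two cases remain mutually exclusive because they already are for $\psi_i$. The only point requiring a little care — and the step I expect to be the mildest obstacle — is choosing the local potentials $u_i$ so that their convergence is genuinely locally uniform; this is routine given the $C^\infty$-convergence $\eta_i\to\eta_0$ and the continuity of the standard solution operator for $dd^c u = \eta$ on a polydisc, but it is what makes the whole reduction work.
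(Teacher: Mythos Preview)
Your approach is correct and follows the same idea as the paper: modify $\varphi_i$ by a smooth correction to make it genuinely plurisubharmonic, then invoke H\"ormander's dichotomy \cite[Theorem 4.1.9(a)]{H2} and transfer back. The paper's execution is slightly more economical: rather than solving $dd^c u_i=\eta_i$ for each $i$ and tracking continuity of the solution operator, it simply observes that since $\eta_i\to\eta_0$ in $C^\infty$, there is a single constant $a>0$ such that $\tilde\varphi_i(z)=\varphi_i(z)+a|z|^2$ is psh on $\Delta_r^n$ for all $i$; the correction $a|z|^2$ is fixed and manifestly bounded, so the back-and-forth is trivial. Your route works just as well, but the step you flag as ``mildest obstacle'' is entirely avoidable.

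One point you should make explicit: H\"ormander's theorem requires the sequence to be uniformly bounded above on compacts. The paper invokes Proposition~\ref{prop_bounded} for this (giving $\sup_X\varphi_i\le C$), and you should do the same; otherwise the hypothesis of the dichotomy is not verified.
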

\begin{proof}
Fix a constant $0 <r < 1$.
Since the forms $\eta_i$ converge to $\eta_0$, the condition
$\eta_i+ dd^c\varphi_i\ge 0$ implies that for some constant $a>0$ the functions
$\tilde{\varphi}_i(z) = \varphi_i(z) + a|z|^2$ are plurisubharmonic on $\Delta_r^n$ for all $i$,
where $\Delta_r\subset \Delta$ is the disc of radius $r$.
In particular, the functions $\tilde{\varphi_i}$ are subharmonic for the flat metric on $\Delta_r^n$, and
since they are also uniformly bounded from above by Proposition \ref{prop_bounded}, we may apply \cite[Theorem 4.1.9(a)]{H2} to them.
The theorem in loc.\ cit.\ implies that one of the two possibilities from the statement
of the lemma holds for $\tilde{\varphi_i}$ on $\Delta^n_r$. But then it also holds for $\varphi_i$,
since the function $a|z|^2$ is bounded on the polydisc. Since $r$ was arbitrary, the lemma follows. 
\end{proof}

\begin{prop}\label{prop_potentials}
After possibly passing to a subsequence, the functions $\varphi_i$ converge in $L^1(X)$ to some $\varphi_0\in\Phi_{\eta_0}$.
\end{prop}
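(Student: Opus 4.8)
The plan is to combine the local dichotomy of Lemma~\ref{lem_potentials} with a standard covering/gluing argument to upgrade local $L^1_{\mathrm{loc}}$-convergence to global $L^1(X)$-convergence, then identify the limit as an element of $\Phi_{\eta_0}$. First I would cover an open neighbourhood of $\XX_0$ in $\XX$ by finitely many polydiscs $\VV_1,\dots,\VV_N$ of the type described before the lemma, so that the corresponding fibre-polydiscs $\Delta^n$ cover $X$ under the fixed trivialization. On each $\VV_j$ the sequence $\varphi_i$ restricted to the fibre satisfies exactly one of the two alternatives of Lemma~\ref{lem_potentials}.

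The key observation is that the first alternative ($\varphi_i\to-\infty$ uniformly on compacts) cannot occur on any $\VV_j$. Indeed, Proposition~\ref{prop_bounded} gives a uniform upper bound $\sup_X\varphi_i\le C$; combined with the normalization $\int_X\varphi_i\,\Vol_{t_i}=0$ and the fact that the volume forms $\Vol_{t_i}$ converge, the $\varphi_i$ cannot tend uniformly to $-\infty$ on a set of positive measure — otherwise the integral would be forced strictly negative in the limit, contradicting the normalization (using that the $\varphi_i$ are bounded above and applying, say, Fatou's lemma to $C-\varphi_i$). Hence on each $\VV_j$ the second alternative holds, so after passing to a subsequence $\varphi_i$ converges in $L^1_{\mathrm{loc}}$ of that fibre-polydisc. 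Using a finite diagonal extraction over $j=1,\dots,N$, I get a single subsequence converging in $L^1_{\mathrm{loc}}$ on each member of the cover; since the polydiscs cover the compact fibre $X$ and the $\varphi_i$ are uniformly bounded above, a partition-of-unity argument (or simply summing the local $L^1$ estimates on a finite subcover of $X$ by relatively compact opens) yields convergence $\varphi_i\to\varphi_0$ in $L^1(X)$.

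It remains to show $\varphi_0\in\Phi_{\eta_0}$, i.e.\ that $\varphi_0\in L^1_0(X)$ and $\eta_0+dd^c\varphi_0\ge 0$. The positivity is stable under $L^1_{\mathrm{loc}}$-limits of currents: the limit in $L^1_{\mathrm{loc}}$ implies $dd^c\varphi_i\to dd^c\varphi_0$ as currents, and since $\eta_i\to\eta_0$ in $C^\infty$, the inequality $\eta_i+dd^c\varphi_i\ge 0$ passes to the limit, so $\eta_0+dd^c\varphi_0\ge 0$; this also shows $\varphi_0$ is (after modification on a null set) $\eta_0$-psh, hence q-psh and in $L^1(X)$. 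For the normalization, I would pass to the limit in $\int_X\varphi_i\,\Vol_{t_i}=0$: since $\varphi_i\to\varphi_0$ in $L^1(X)$ with respect to the fixed reference volume $\Vol_0$ and the volume forms $\Vol_{t_i}$ converge smoothly to $\Vol_0$, one gets $\int_X\varphi_0\,\Vol_0=0$, so $\varphi_0\in L^1_0(X)$ and therefore $\varphi_0\in\Phi_{\eta_0}$.

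\textbf{Main obstacle.} The step I expect to require the most care is ruling out the ``$-\infty$'' alternative of Lemma~\ref{lem_potentials} and, more generally, promoting local $L^1_{\mathrm{loc}}$-convergence to global $L^1(X)$-convergence: one must use the uniform upper bound from Proposition~\ref{prop_bounded} together with the zero-average normalization on the (varying) fibres to prevent mass escaping to $-\infty$, and then patch the finitely many local subsequences compatibly. The positivity and normalization of the limit are then routine.
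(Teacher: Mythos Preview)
Your proposal is correct and the overall architecture (finite cover, exclude the $-\infty$ alternative of Lemma~\ref{lem_potentials}, diagonal extraction, pass to the limit in the positivity and the normalization) matches the paper's. The one genuine difference is \emph{how} you exclude the $-\infty$ alternative.

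The paper argues topologically: it defines the open set $W\subset\XX_0$ of points near which $\varphi_i\to-\infty$ uniformly, observes $W\neq\XX_0$ because $\sup_X\varphi_i\ge 0$ (from Proposition~\ref{prop_bounded}) forces a limit point outside $W$, and then shows $W=\emptyset$ by a boundary argument: a chart around a point of $\partial W$ would meet both $W$ and its complement, violating the dichotomy of Lemma~\ref{lem_potentials}. Your argument is instead a direct mass estimate: if $\varphi_i\to-\infty$ uniformly on a compact of positive measure, then, since $\varphi_i\le C$ elsewhere and the $\Vol_{t_i}$ are uniformly comparable to $\Vol_0$, one gets $\int_X\varphi_i\,\Vol_{t_i}\to-\infty$, contradicting the normalization. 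This is shorter and uses the full normalization together with the upper bound, whereas the paper's route uses only the lower bound $\sup_X\varphi_i\ge 0$ and is closer in spirit to the classical ``no component at $-\infty$'' connectedness argument for families of psh functions. Both approaches yield the same conclusion and the same byproduct (an independent proof that $\alpha_0$ is pseudo-effective).
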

\begin{proof}
{\em Step 1.} We cover a neighbourhood of $\XX_0$ by finitely many local charts $\VV$ as above.
The convergence of $\varphi_i$ in $L^1_{\mathrm{loc}}(\Delta^n)$ for each of the charts
implies the convergence of $\varphi_i$ in $L^1(X)$. To apply Lemma \ref{lem_potentials}
we are going to check that $\varphi_i$ do not converge uniformly to $-\infty$
on all compact subsets of a chart.

Define an open subset $W\subset \XX_0$ by the following condition: $x\in\XX_0$ lies in $W$
if and only if for some open neighbourhood $U\subset\XX$ of $x$ the sequence $\varphi_i$
converges to $-\infty$ uniformly on $U$. The latter means that for any $c>0$ there exists
$i_0$ such that for all $i\ge i_0$ we have $\varphi_i \le -c$ on $\XX_{t_i}\cap U$.

We need to show that $W=\emptyset$. First we check that $W\neq \XX_0$.
By Proposition \ref{prop_bounded} we have $\sup_X(\varphi_i)\ge 0$ and by the compactness of $X$
there exists a sequence of points $x_i\in\XX_i$ with $\varphi_i(x_i) \ge 0$. If
$x_0\in\XX_0$ is any limit point of the sequence $x_i$ (which exists because $X$ is compact),
then $x_0\notin W$. Assuming that $W\neq \emptyset$ we can find a point $x\in \partial W$ and
a local chart $\VV\subset \XX$ around $x$ that has non-empty intersection both 
with $W$ and $\XX_0\setminus W$.
In this case the restrictions of $\varphi_i$ to $\VV$ can not converge to $-\infty$ uniformly
on compact subsets, because $x\notin W$. By Lemma \ref{lem_potentials}
the restrictions of $\varphi_i$ to $\VV$ contain an $L^1_{\mathrm{loc}}$-convergent
subsequence. But this contradicts the fact that $\varphi_i$
uniformly converge to $-\infty$ in a neighbourhood of any point of $W\cap \VV\neq \emptyset$.
The contradiction shows that $W=\emptyset$ and completes the proof.

{\em Step 2.} We have found a subsequence $\varphi_i$ that converges in $L^1(X)$ to a function $\varphi_0\in L^1(X)$.
The function $\varphi_0$ satisfies the condition $\eta_0+dd^c\varphi_0\ge 0$ by continuity,
and we may assume that $\varphi_0$ is $\eta_0$-psh. To conclude the proof we will show
that $\int_X \varphi_0\,\Vol_0 = 0$, so that $\varphi_0\in\Phi_{\eta_0}$.

We have $\int_X|\varphi_i - \varphi_0|\Vol_0 \to 0$ as $i\to\infty$. Consequently, $\int_X|\varphi_i|\,\Vol_0$
is bounded from above uniformly in $i$ and $\int_X \varphi_i\, \Vol_0\to \int_X \varphi_0\, \Vol_0$.
We have $\Vol_{t_i} = f_i \Vol_0$ for some $C^\infty$-functions $f_i$ uniformly converging to
the constant function equal to $1$ as $i\to \infty$. Therefore
$$
\int_X\varphi_i\,\Vol_0 = \int_X\varphi_i(\Vol_0 - \Vol_{t_i}) = \int_X \varphi_i(1-f_i)\Vol_0.
$$
The latter integral converges to zero, hence $\int_X \varphi_0\, \Vol_0 = 0$ as claimed.
\end{proof}

\begin{rem}
In the above proof we never used the fact that the limit class $\alpha_0$ is pseudo-effective.
In fact, the argument above produces a function $\varphi_0$ that satisfies the
condition $\eta_0+dd^c\varphi_0\ge 0$, hence it provides an alternative proof
of \cite[Theorem A.1]{AHD}.
\end{rem}

\begin{proof}[Proof of Theorem \ref{thm_diam}]
We consider a sequence of pseudo-effective classes $\alpha_i\in\PP_{\XX/B}$ in the fibres
over the points $t_i\in B$, and assume $\alpha_i\to \alpha_0\in\PP_{\XX_0}$.
We need to show that $\delta(\alpha_0)\ge \limsup \delta(\alpha_i)$.
As explained above, we represent the classes $\alpha_i$ by harmonic forms $\eta_i$.
The volume form on the fibre $\Vol_i = \VVol|_{\XX_i}$ is the restriction of the relative volume form.
Passing to a subsequence we assume that the limit $d = \lim \delta(\alpha_i)$
exists and choose the potentials $\varphi_i,\psi_i\in \Phi_{\eta_i}$ so
that
$$
d = \lim_{i\to\infty}\int_X|\varphi_i-\psi_i|\Vol_i.
$$
By Proposition \ref{lem_potentials}, after passing to subsequences, the sequences of potentials converge in $L^1(X)$:
$\varphi_i\to \varphi_0$, $\psi_i\to \psi_0$ for some $\varphi_0,\psi_0\in\Phi_{\eta_0}$.
Since the volume forms also converge $\Vol_i\to \Vol_0$, we get that
$$
d = \int_X|\varphi_0-\psi_0|\Vol_0.
$$
It follows that $d\le \mathrm{diam}(\Phi_{\eta_0})=\delta(\alpha_0)$ and this completes the proof.
\end{proof}

\subsection{Diameters of dynamical classes}\label{sec_dynam}
We recall the results of \cite{_Cantat:K3_} about
the vanishing of diameters of dynamical pseudo-effective classes.

Following Dinh \cite{Di}, we call $\alpha\in \PP_X$ dynamical if there exists an automorphism $f\colon X\to X$
and a volume form $\Vol$ on $X$ such that $f^*\alpha =\lambda \alpha$ with $\lambda > 1$
and $f^*\Vol = \Vol$. We represent $\alpha$
by a smooth $(1,1)$-form $\eta$. Since the map $\lambda^{-1}f^*$ preserves $\alpha$,
we have $\lambda^{-1}f^*\eta = \eta+dd^c u$ for some $u\in C^{\infty}(X)$ with $\int_X u\,\Vol = 0$.
We see that $\lambda^{-1}f^*$ acts on $\Phi_\eta$:
it maps $\varphi\in \Phi_\eta$ to $F(\varphi) = u + \lambda^{-1}f^*\varphi \in \Phi_\eta$.
Since $\lambda^{-1}f^*$ is invertible, $F$ is an automorphism of $\Phi_\eta$.
Recall that $f^*$ preserves $\Vol$ and observe that $F$ is a uniform contraction:
$$
\|F(\varphi)-F(\psi)\|_{L^1} = \frac{1}{\lambda}\int_X f^*|\varphi-\psi|\Vol = \frac{1}{\lambda}\|\varphi-\psi\|_{L^1}.
$$
Since the set $\Phi_\eta$ is compact and non-empty, this is only possible if it consists of one point.
We therefore obtain the following statement.

\begin{thm}[\cite{_Cantat:K3_, DS3}] \label{_dyn_current_rigid_Theorem_}
In the above setting $\delta({\alpha}) = 0$.
\end{thm}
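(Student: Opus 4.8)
The plan is to leverage the affine self-map of the compact set $\Phi_\eta$ of normalized potentials that is induced by the dynamics, together with the observation that it strictly contracts the $L^1$-distance: a compact metric space on which a surjective strict contraction acts must be a single point, and this forces $\delta(\alpha)=\mathrm{diam}(\Phi_\eta)=0$.

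Concretely, I would fix a smooth $(1,1)$-form $\eta$ with $[\eta]=\alpha$; since $\lambda^{-1}f^*$ fixes the class $\alpha$, the $dd^c$-lemma on the compact K\"ahler manifold $X$ yields $u\in C^\infty(X)$, unique up to an additive constant, with $\lambda^{-1}f^*\eta=\eta+dd^c u$, and I normalize $u$ by $\int_X u\,\Vol=0$. Setting $F(\varphi)=u+\lambda^{-1}f^*\varphi$, there are three bookkeeping points: (i) $F$ sends $\Phi_\eta$ into itself, because $\eta+dd^c\varphi\ge 0$ implies $\lambda^{-1}f^*(\eta+dd^c\varphi)=\eta+dd^c F(\varphi)\ge 0$ as the pullback of a positive closed current by the holomorphic automorphism $f$ stays positive; (ii) $F$ preserves the normalization $\int_X\varphi\,\Vol=0$, using $f^*\Vol=\Vol$ together with the change-of-variables formula $\int_X f^*\varphi\,\Vol=\int_X\varphi\,\Vol$ and $\int_X u\,\Vol=0$; (iii) $F$ is a bijection of $\Phi_\eta$, with inverse $\psi\mapsto\lambda\,(f^{-1})^*(\psi-u)$, since $\lambda^{-1}f^*$ is invertible. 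The contraction estimate is then immediate from $f^*\Vol=\Vol$:
$$
\|F(\varphi)-F(\psi)\|_{L^1}=\lambda^{-1}\int_X|f^*(\varphi-\psi)|\,\Vol=\lambda^{-1}\int_X|\varphi-\psi|\,\Vol=\lambda^{-1}\|\varphi-\psi\|_{L^1}.
$$

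To conclude, since $F(\Phi_\eta)=\Phi_\eta$ by surjectivity, the displayed identity gives $\mathrm{diam}(\Phi_\eta)=\mathrm{diam}(F(\Phi_\eta))=\lambda^{-1}\mathrm{diam}(\Phi_\eta)$; as $\mathrm{diam}(\Phi_\eta)<\infty$ by the compactness of $\Phi_\eta$ recalled above and $\lambda>1$, this forces $\mathrm{diam}(\Phi_\eta)=0$, i.e.\ $\delta(\alpha)=0$. The only genuinely non-routine step is item (i), that $F$ really preserves $\Phi_\eta$; it rests on positivity of holomorphic pullbacks of positive currents and on the existence of the potential $u$ furnished by the $dd^c$-lemma, after which everything reduces to the change-of-variables formula and the normalization of $u$. (The same argument also exhibits the unique fixed point of $F$ as the unique, automatically $f^*$-invariant, potential of $\alpha$, but the diameter-scaling route is the most economical for the stated conclusion.)
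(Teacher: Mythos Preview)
Your proof is correct and follows essentially the same route as the paper: define $F(\varphi)=u+\lambda^{-1}f^*\varphi$ on $\Phi_\eta$, observe it is a bijection and a $\lambda^{-1}$-contraction in $L^1$ via $f^*\Vol=\Vol$, and conclude that the compact set $\Phi_\eta$ must be a singleton. You have supplied more detail than the paper on why $F$ preserves $\Phi_\eta$ (the three bookkeeping points) and phrased the conclusion as a diameter-scaling identity rather than invoking compactness directly, but the argument is the same.
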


It follows that there exists a unique positive current representing the class $\alpha$.
In this case we call that current {\em rigid}.

\section{Hyperk\"ahler manifolds: basic notions}\label{sec_hk}

Starting from this section we will focus our attention on a special class of complex manifolds:
the hyperk\"ahler manifolds. This section is an exposition of the necessary background material
on hyperk\"ahler manifolds and it does not contain any new results, although the presentation
may be somewhat different from what one can find in the literature. For more details
on hyperk\"ahler manifolds see \cite{Hu} or \cite{So}. We will denote by $\bbH$ the algebra
of quaternions. Multiplication from the left by an imaginary quaternion of unit length
defines a complex structure on the real vector space $\bbH^n$. We will say that an
endomorphism of $\bbH^n$ is quaternionic-linear if it commutes with these complex structures
for all unit length imaginary quaternions. We will denote by $\Sp(n)$ the group
of quaternionic-linear transformations of $\bbH^n$
that preserve the standard quaternionic-Hermitian metric. Throughout this
section we fix a compact simply connected $C^\infty$-manifold $X$ such that $\dim_\bbR X = 4n$.

\subsection{Hyperk\"ahler metrics, symplectic structures and the BBF form}\label{sec_HK_intro}
A Riemannian metric $g$ on $X$ is {\em hyperk\"ahler} if the holonomy group of its Levi--Civita
connection $\nabla^g$ is contained in $\Sp(n)$. The action of $\Sp(n)$ on $\bbH^n$ commutes with
the family of complex structures defined via left multiplication by the imaginary unit quaternions.
Therefore by the holonomy principle the Levi--Civita connection $\nabla^g$ preserves a family
of complex structures on $X$.
The family of complex structures arising this way is parametrized by the points of the sphere $S^2$
and will be called the {\em twistor family} of $g$.

We will always assume that a hyperk\"ahler metric on $X$ is of {\it maximal holonomy}, i.e.\ the holonomy
group of $\nabla^g$ is isomorphic to $\Sp(n)$. In this case the complex structures forming the twistor
family are the only complex structures preserved by $\nabla^g$. In fact, if $X$ admits one hyperk\"ahler
metric of maximal holonomy, it follows from the Beauville--Bogomolov decomposition theorem \cite{Be}
that any hyperk\"ahler metric on $X$ has maximal holonomy, see e.g. \cite[Proposition 3.1]{So}

Given a complex structure $I$ on $X$, we will denote by $X_I$ the corresponding complex manifold.
We will say that the complex structure $I$ is of {\em hyperk\"ahler type} if $X$
admits a hyperk\"ahler metric $g$ such that $\nabla^g I = 0$.
An equivalent condition follows from Yau's solution to the Calabi conjecture
(see e.g. \cite[section 2]{Be}): $I$ is of hyperk\"ahler type if and only if $X_I$ admits
a K\"ahler metric and $H^0(X_I,\Omega^2_{X_I})$ is spanned by a holomorphic symplectic
form. If $I$ is of hyperk\"ahler type, the manifold $X_I$ is usually called irreducible
holomorphic symplectic (IHS) or simply hyperk\"ahler in the literature.
For brevity, we will call $X$ a {\em hyperk\"ahler manifold} if it admits a complex structure
of hyperk\"ahler type, or, equivalently, if $X$ admits a hyperk\"ahler metric of maximal holonomy.


Given a hyperk\"ahler metric $g$ one can pick a triple of $\nabla^g$-parallel complex structures $I$, $J$ and $K$
such that $K = IJ=-JI$. We will call the tuple $(g,I,J,K)$ a {\it hyperk\"ahler structure} on $X$.
Denote by $\omega_I$, $\omega_J$ and $\omega_K$ the corresponding K\"ahler forms and by $\Vol_g$
the Riemannian volume form. The 2-form $\sigma_I = \omega_J +\ii\omega_K$ is a holomorphic
symplectic form on $X_I$. Recall the following well known relations (see e.g. \cite{Hu}):
$\omega_I^{2n} = \omega_J^{2n} = \omega_K^{2n} = (2n)!\Vol_g$ and $\sigma_I^n\wdg\bar{\sigma}_I^n = (n!)^2 4^n\Vol_g$.
We will usually assume that a hyperk\"ahler metric has unit volume, meaning that $\int_X \Vol_g = 1$.
Equivalently, the symplectic structure $\sigma_I$ has unit volume if $\int_X \sigma_I^n\wdg\bar{\sigma_I}^n = (n!)^{2}4^{n}$.

Next we recall some basic facts about the cohomology of $X$, see e.g. \cite{Be}, \cite{Fu} and \cite{Hu}.
The vector space $H^2(X,\bbQ)$ carries a natural non-degenerate quadratic form $q$ called the Beauville--Bogomolov--Fujiki (BBF)
form. This form may be characterized by the following property: there exists a positive constant $c_X\in \bbQ$
such that for all $a\in H^2(X,\bbQ)$ we have the {\it Fujiki relation}:
\begin{eqnarray}\label{eqn_Fujiki}
\int_X a^{2n} = c_X q(a)^n.
\end{eqnarray}
We will normalize $q$ in such a way that it is primitive and integral on $H^2(X,\bbZ)$.
The form $q$ may be expressed merely in terms of the intersection product and the Pontryagin
classes of $X$, hence it is a topological invariant of $X$ independent of the choice of a complex
structure, see e.g. \cite[section 2]{So}. The BBF form has signature $(3,b_2(X)-3)$.
Assume that $I$ is of hyperk\"ahler type, and, as above, $\omega_I \in \Lambda^{1,1}_I X$ is a K\"ahler form
and $\sigma_I\in H^0(X,\Omega^2_{X_I})$ is the symplectic form, $\sigma_I = \omega_J+\ii\omega_K$.
Consider the subspace $W \subset H^2(X,\bbR)$ spanned by the classes $[\omega_I]$, $[\omega_J]$
and $[\omega_K]$. The quadratic form $q$ is positive on all K\"ahler classes (see e.g. \cite[section 1.9]{Hu}),
therefore $q|_W$ is positive definite.

Note that for a hyperk\"ahler metric $g$
of unit volume we have $\int_X \omega_I^{2n} = (2n)!$, and hence $c_X q(\omega_I)^n = (2n)!$ by the formula (\ref{eqn_Fujiki}).
Since $q(\omega_I)$ is a positive real number, it does not depend on the metric and the complex
structure. So for any K\"ahler form $\omega$ of any unit volume hyperk\"ahler metric we have $q(\omega) = b_X$
for the intrinsically defined constant $b_X = \sqrt[n]{(2n)!c_X^{-1}}$.

\subsection{Teichm\"uller spaces, MBM classes and the K\"ahler cone}\label{sec_Teich}

Let us recall the definitions of various Teichm\"uller-type spaces associated with the
hyperk\"ahler manifold $X$. For a background on Teichm\"uller spaces and the related Kuranishi
theory of local deformations of complex structures see \cite{Ko}, \cite{Me} and \cite{V1}.

We denote by $\Diff(X)$ the Lie group of diffeomorphisms of $X$ with the Fr\'echet topology (see \cite[Section 3]{Me}) and by $\Diff^\circ(X)$ the
connected component of the identity in this group. It follows from the work of Hitchin and 
Sawon \cite{HS} that all diffeomorphisms of $X$ are orientation-preserving
and all complex structures of hyperk\"ahler type induce the same
orientation on $X$, see \cite[section 2.2]{So} for a detailed explanation of this fact. The quotient $\MC(X) = \Diff(X)/\Diff^\circ(X)$ is
called the {\em mapping class group} of $X$.

We will consider the following spaces equipped with their natural Fr\'echet topologies
and the corresponding quotients by the action of $\Diff^\circ(X)$. 

\begin{enumerate}
\item $\Comp(X)$, the space of all complex structures of hyperk\"ahler type on $X$, and the Teichm\"uller space $\Teich(X) = \Comp(X)/\Diff^\circ(X);$
\item $\HK(X)$, the space of all hyperk\"ahler metrics of unit volume on $X$, and $\TeichHK(X) = \HK(X)/\Diff^\circ(X);$
\item $\HKa(X)$, the space of pairs $(g,I)$, where $g\in \HK(X)$ and $I$ is a complex structure with $\nabla^g I = 0$,
and $\TeichHKa(X) = \HKa(X)/\Diff^\circ(X);$
\item $\HKb(X)$, the space of hyperk\"ahler structures on
  $X$, that is, tuples $(g,I,J,K)$, where $g\in \HK(X)$, and $I$, $J$, $K$ are complex structures
with \[ \nabla^g I = \nabla^g J = \nabla^g K = 0
\text{\ \ and \ \ }K = IJ = -JI;
\] the corresponding Teichm\"uller space
$\TeichHKb(X) = \HKb(X)/\Diff^\circ(X);$
\end{enumerate}

We will slightly abuse the notation, and the image of $I\in \Comp(X)$ in $\Teich(X)$ will also
be denoted by $I$. We will also omit $X$ from the notation and simply write $\Teich$, $\TeichHK$, and so on.
All of the Teichm\"uller spaces defined above admit natural period maps with
values in certain homogeneous domains that we now describe.
Denote $V = H^2(X,\bbQ)$, $d = b_2(X) = \dim(V)$ and recall that $q$ is a non-degenerate quadratic form of signature $(3,d-3)$.
We will say that a subspace $W\subset V_\bbR$ is positive if $q|_W$ is positive definite.
We list the period domains and describe the corresponding period maps.
\begin{enumerate}

\item The period domain for $\Teich$:
\begin{eqnarray*}
\Dom &=& \{L\subset V_\bbR\st \dim(L) = 2,\,L\mbox{ is oriented and positive}\}\\
&\simeq& \rmO(3,d-3)/\SO(2)\times \rmO(1,d-3).
\end{eqnarray*}
The period map $\Per\colon\Teich\to \Dom$ sends a complex structure $I$ to the oriented subspace
$L = \langle \mathrm{Re}[\sigma_I],\mathrm{Im}[\sigma_I]\rangle$, where $\sigma_I$ is a
holomorphic symplectic form on $X_I$, uniquely defined up to the action of $\bbC^*$,
and $[\sigma_I]$ is its cohomology class. Note that $\Diff^\circ$ acts trivially on $H^\sdot(X,\bbQ)$,
so the period map is well defined.

It is sometimes convenient to use an alternative description of $\Dom$ as an open subset
of a quadric in $\bbP(V_\bbC)$, namely:
$$
\Dom \simeq \{x\in \bbP(V_\bbC) \st q(x) = 0,\,\, q(x,\bar{x})>0\},
$$
where $x$ corresponds to the oriented subspace $L = \langle\mathrm{Re}(x),\mathrm{Im}(x)\rangle \subset V_\bbR$
in the definition of $\Dom$, see \cite[\S 2.4]{V1}.

\item The period domain for $\TeichHK$:
\begin{eqnarray*}
\DomHK &=& \{W\subset V_\bbR\st \dim(W) = 3,\,W\mbox{ is oriented and positive}\}\\
&\simeq& \rmO(3,d-3)/\SO(3)\times \rmO(d-3).
\end{eqnarray*}
The period map $\PerHK\colon\TeichHK\to \DomHK$ sends a hyperk\"ahler metric $g$ to the subspace
$\langle [\omega_I], [\omega_J], [\omega_K]\rangle$ spanned by the classes of the K\"ahler forms of all
$\nabla^g$-parallel complex structures. Note that this subspace has an intrinsic
orientation induced by the multiplication in $\bbH$: for a pair of non-proportional
imaginary quaternions $a,b\in\bbH$ the triple $a,b,\mathrm{Im}(ab)$ forms an oriented basis
in the space of imaginary quaternions.

\item The period domain for $\TeichHKa$:
\begin{eqnarray*}
\DomHKa &=& \{(W,u) \st W\in \DomHK,\, u\in W,\, q(u) = b_X\}\\
&\simeq& \rmO(3,d-3)/\SO(2)\times \rmO(d-3),
\end{eqnarray*}
where the constant $b_X$ is defined in the end of section \ref{sec_HK_intro}.
The period map $\PerHKa\colon\TeichHKa\to \DomHKa$ sends a pair $(g,I)$ to the pair $(W,u)$, where
$W = \langle [\omega_I], [\omega_J], [\omega_K]\rangle$ and $u = [\omega_I]$.

\item The period domain for $\TeichHKb$:
\begin{eqnarray*}
\DomHKb &=& \{(u,v,w) \st u,v,w\in V_\bbR,\, q(u) = q(v) = q(w) = b_X,\nonumber\\ && \qquad q(u,v) = q(v,w) = q(w,u) = 0\}\\
&\simeq& \rmO(3,d-3)/\rmO(d-3).
\end{eqnarray*}
The period map \[ \PerHKb\colon\TeichHKb\to \DomHKb\]
sends a hyperk\"ahler structure $(g,I,J,K)$ to
the triple $( [\omega_I], [\omega_J],
[\omega_K] )
\in \DomHKb$. 

%
\end{enumerate}

We have a number of natural maps between the Teichm\"uller spaces
and period domains. We summarize them in the following commutative diagram.

\begin{equation}\label{eqn_spaces}
\begin{tikzcd}[]
\TeichHK \dar{\PerHK} & \TeichHKa \dar{\PerHKa}\drar{\varphi}\arrow[l,"\,\,\tau_1"']\rar{\tau_2\,} & \Teich \dar{\Per}\\
\DomHK & \DomHKa \lar\rar & \Dom
\end{tikzcd}
\end{equation}

The maps $\tau_i$ are defined as follows: $\tau_1(g,I) = g$, $\tau_2(g,I) = I$.
The corresponding maps between the period domains are induced by the obvious inclusions of Lie groups.

Note that $\tau_1$ is an $S^2$-bundle whose fibres are twistor families of complex structures.
Given $I\in \Teich$, it follows from Aubin--Yau's solution to the Calabi conjecture (see e.g. \cite{GZ} for
a modern treatment and related results)
that any K\"ahler class on $X_I$ can be represented by the K\"ahler form of a unique Ricci-flat
K\"ahler metric. This metric is hyperk\"ahler, as follows from the Beauville--Bogomolov decomposition theorem
\cite[Theorem 1]{Be}, the fact that $X_I$ is simply connected and $H^0(X_I,\Omega^2_{X_I})$ is spanned
by a symplectic form. Therefore the fibre of $\tau_2$ over a point corresponding to the complex structure $I$ is isomorphic
to the {\it K\"ahler cone} of $X_I$ which will be denoted $\KK_{X_I}$.

The central result about Teichm\"uller spaces of hyperk\"ahler manifolds is the global Torelli
theorem \cite{V1} which we now recall. Let us fix a connected component $\Teich^\circ$ of the Teichm\"uller space.
It is clear from (\ref{eqn_spaces}) that the choice of $\Teich^\circ$ uniquely determines connected
components $\TeichHK^\circ$, $\TeichHKa^\circ$ and $\TeichHKb^\circ$.
The Teichm\"uller space $\Teich$ has a structure of non-Hausdorff complex manifold locally isomorphic
to $\Dom$. It is possible to identify non-separated points of $\Teich$ and produce a Hausdorff
complex manifold $\widetilde{\Teich}$ which is mapped to $\Dom$ by the period map $\tilde{\Per}$.
The global Torelli theorem \cite[Theorem 4.29]{V1} claims that $\tilde{\Per}^\circ\colon\widetilde{\Teich}^\circ\to\Dom$
is an isomorphism of complex manifolds.

One can also describe the fibres of the period map $\Per^\circ\colon \Teich^\circ\to \Dom$. Fix an oriented subspace $L\subset V_\bbR$
corresponding to the point $[L] = p\in \Dom$. Recall that if $\rho^\circ(I) = L$, then $L=\langle{\rm Re}(\sigma_I), {\rm Im}(\sigma_I)\rangle$. The restriction of $q$ to $V^{1,1}_\bbR = L^\perp\subset V_\bbR$
has signature $(1,d-3)$ and we denote by $\CC^+\subset V^{1,1}_\bbR$ the {\it positive cone}, i.e.\ one
of the two connected components of the set of $x\in V^{1,1}_\bbR$
such that $q(x)>0$. The connected component is determined by the condition that it contains the K\"ahler classes
of the hyperk\"ahler manifolds in the fibre ${\Per^\circ}^{-1}(p)$. 
The points of the fibre ${\Per^\circ}^{-1}(p)$ correspond to bimeromorphic hyperk\"ahler manifolds, see \cite[Theorem 4.3]{Hu}. These points
are distinguished by the K\"ahler cones of the corresponding
manifolds: each point $[I] \in{\Per^\circ}^{-1}(p)$ may be uniquely identified by specifying
$x\in \CC^+$ that corresponds to a K\"ahler class on $X_I$. It follows that
the period maps $\PerHK^\circ\colon \TeichHK^\circ \to\DomHK$ and $\PerHKa^\circ\colon \TeichHKa^\circ\to \DomHKa$
are open embeddings. One may also describe ${\Per^\circ}^{-1}(p)$ as follows.
Consider the composition $\varphi^\circ = \rho\circ \tau_2^\circ\colon \TeichHKa^\circ\to \Dom$.
Then $\KK = {\varphi^\circ}^{-1}(p)$ can be identified with an open subset of $\CC^+$ and ${\Per^\circ}^{-1}(p)$ is
the set of connected components of $\KK$. 

The subset $\KK\subset \CC^+$ is the complement to a union of hyperplanes in $V^{1,1}_\bbR$
defined as orthogonal complements to certain integral cohomology classes called {\it MBM classes}. Let us recall
the definition of these classes, see \cite{AV1}. For a point $[g]\in \TeichHK^\circ$
consider the corresponding twistor family $\tau_1^{-1}[g]$ and it image $C_g = \varphi(\tau_1^{-1}[g])$
in the period domain $\Dom$. If we identify $\Dom$ with an open subset of a quadric, then
$C_g\subset \Dom$ is a conic which we will call a {\it twistor conic}. Consider all primitive cohomology
classes $x\in H^2(X,\bbZ)$ such that $q(x) < 0$ and $\bbP(x^\perp) \subset \bbP(V_\bbC)$ does
not contain any twistor conic. Such classes are called MBM classes and they admit a number
of equivalent definitions that we will not use, see \cite{AV1}. The set of MBM classes may
depend on the choice of a connected component of the Teichm\"uller space. We will therefore
denote this set $\MBM^\circ$. The central result about MBM classes obtained in \cite[Corollary 5.2]{AV4}
states that there exists a constant $M>0$, possibly depending on the connected component $\Teich^\circ$,
such that for all $x\in\MBM^\circ$ we have $-M\le q(x)<0$. We will briefly rephrase
this by saying that the BBF squares of the MBM classes are bounded (note that we assume the MBM
classes to be primitive by definition).

We can now describe the subset $\KK\subset \CC^+$ defined above more precisely. Consider
the set $\MBM^{1,1} = \MBM^\circ\cap V^{1,1}_\bbR$. For any $x\in \MBM^{1,1}$ let
us denote by $H_x$ the orthogonal complement of $x$ in $V^{1,1}_\bbR$. Then
we have $\KK = \CC^+\setminus \cup_{x\in\MBM^{1,1}} H_x$. Since the BBF squares of MBM classes
are bounded, the collection of hyperplanes $H_x$ is locally finite in $\CC^+$, i.e.\ any
point of $\CC^+$ has a neighbourhood intersecting only a finite number of the hyperplanes $H_x$,
see Proposition \ref{prop_loc_fin}.
The hyperplanes therefore cut the positive cone $\CC^+$ into open chambers, and for
any complex structure $I$ with period $p$ the K\"ahler cone $\KK_{X_I}$ is one of those chambers,
see \cite[\S 5.2]{Ma}.

\subsection{Canonical volume forms on hyperk\"ahler manifolds}\label{sec_volume}

In order to study diameters of pseudo-effective classes, we need to introduce volume
forms on our manifolds. Let us observe that a complex structure of hyperk\"aler type $I$
admits a natural volume form $\Vol_I = \sigma_I^n\wdg\bar{\sigma}_I^n$, where $\sigma_I$
is a symplectic form of unit volume. The form $\sigma_I$ is unique up to multiplication
by $z\in \bbC$ with $|z|=1$, so the form $\Vol_I$ is uniquely defined. We will call $\Vol_I$
the {\it canonical volume form}. The mapping class group action preserves the canonical
volume forms, i.e.\ for any $\varphi\in\Diff(X)$ we have $\varphi^*\Vol_I = \Vol_{\varphi^*I}$.
This follows from the fact that $\varphi^*$ maps a symplectic form into a symplectic form
and preserves the total volume of $X$.

\begin{prop}\label{prop_diam_invar}
Assume that $\alpha\in\PP_{X_I}$ is a pseudo-effective class, $\varphi\in\Diff(X)$
and $I' = \varphi^*I$.
Then $\varphi^*\alpha\in\PP_{X_{I'}}$ and $\delta(\alpha) = \delta(\varphi^*\alpha)$,
where $\delta$ is the diameter function (\ref{eqn_diam}) computed using the canonical volume
form introduced above.
\end{prop}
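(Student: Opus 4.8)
The plan is to reduce everything to the defining construction of the diameter function and to the naturality of the canonical volume form under diffeomorphisms. First I would observe that pseudo-effectivity is preserved by pullback: if $\alpha$ is represented by a closed positive current $T$ on $X_I$, then $\varphi^*T$ is a closed positive current on $X_{I'}$ (recall $\varphi\colon X_{I'}\to X_I$ is a biholomorphism since $I' = \varphi^*I$, so $\varphi^*$ sends $(1,1)$-forms to $(1,1)$-forms and preserves positivity), and $[\varphi^*T] = \varphi^*\alpha$. Hence $\varphi^*\alpha\in\PP_{X_{I'}}$.

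For the equality of diameters, I would unwind the definition in equation (\ref{eqn_diam}). Pick a smooth closed $(1,1)$-form $\eta$ on $X_I$ with $[\eta]=\alpha$; then $\varphi^*\eta$ is a smooth closed $(1,1)$-form on $X_{I'}$ with $[\varphi^*\eta]=\varphi^*\alpha$. The key point is that pullback by $\varphi$ gives a bijection
$$
\varphi^*\colon \Phi_\eta \xrightarrow{\ \sim\ } \Phi_{\varphi^*\eta},\qquad \psi\longmapsto \psi\circ\varphi,
$$
which is well defined because $\varphi^*(dd^c\psi) = dd^c(\psi\circ\varphi)$, so $\eta+dd^c\psi\ge 0$ pulls back to $\varphi^*\eta + dd^c(\psi\circ\varphi)\ge 0$; and because the normalization $\int_X\psi\,\Vol_I = 0$ is preserved, since $\varphi^*\Vol_I = \Vol_{\varphi^*I} = \Vol_{I'}$ by the paragraph preceding the proposition and the change-of-variables formula gives $\int_X (\psi\circ\varphi)\,\Vol_{I'} = \int_X \psi\,\Vol_I = 0$. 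The inverse is pullback by $\varphi^{-1}$, so this is indeed a bijection.

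Finally, this bijection is an isometry for the $L^1$-distances used to define the diameters: for $\psi_1,\psi_2\in\Phi_\eta$,
$$
\int_X |\psi_1\circ\varphi - \psi_2\circ\varphi|\,\Vol_{I'} = \int_X |(\psi_1-\psi_2)\circ\varphi|\,\Vol_{I'} = \int_X |\psi_1-\psi_2|\,\Vol_I,
$$
again by change of variables and $\varphi^*\Vol_I = \Vol_{I'}$. Taking the supremum over all pairs shows $\mathrm{diam}(\Phi_{\varphi^*\eta}) = \mathrm{diam}(\Phi_\eta)$, i.e.\ $\delta(\varphi^*\alpha) = \delta(\alpha)$. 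There is no serious obstacle here; the only thing to be careful about is the bookkeeping of which way $\varphi$ goes as a map of complex manifolds and the fact that the normalization constraint cutting out $L^1_0$ is genuinely preserved, which is exactly where the invariance property $\varphi^*\Vol_I = \Vol_{\varphi^*I}$ of the canonical volume form is used.
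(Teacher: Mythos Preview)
Your proof is correct and follows exactly the paper's approach: pick a smooth representative $\eta$ of $\alpha$, use the $\varphi^*$-invariance of the canonical volume form to show $\Phi_{\varphi^*\eta}=\varphi^*\Phi_\eta$, and conclude from the definition of the diameter. The paper simply states this in one line, while you have spelled out the details (bijectivity, preservation of the normalization, and the $L^1$-isometry) accurately.
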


\begin{proof}
Let $\eta\in \Lambda^{1,1}_\bbR X_I$ be a two-form representing $\alpha$. Then it
is clear from the definition of a potential (\ref{eqn_potentials}) and from the $\varphi^*$-invariance of the
canonical volume form that $\Phi_{\varphi^*\eta} = \varphi^*\Phi_\eta$.
The claim now follows from the definition of the diameter.
\end{proof}

\section{The parabolic Teichm\"uller space and orbits of the mapping class group action}\label{sec_teich}

\subsection{The mapping class group action and density of its orbits}

We start this section by recalling the main results of \cite{V2}. We will use the notation
introduced in the previous section, in particular we will work with a fixed connected component
$\Teich^\circ$ of the Teichm\"uller space. The group $\MC(X)$ acts on the Teichm\"uller spaces
introduced above and permutes their connected components. We denote by $\MC^\circ$ the
subgroup that preserves $\Teich^\circ$. The group $\MC^\circ$ also acts on $\TeichHK^\circ$,
$\TeichHKa^\circ$ and $\TeichHKb^\circ$.
Consider the action of $\MC^\circ$ on the cohomology
of $X$, in particular on the vector space $V$. It follows from \cite{V1}, see also \cite{Ma}, that the image of
$\MC^\circ$ in $\rmO(V,q)$ is a finite index subgroup in $\rmO(H^2(X,\bbZ),q)$. This subgroup,
called the {\it monodromy group}, will be denoted by $\Gamma$.

Let us consider a point of $I\in \Teich$ with the symplectic form $\sigma_I$.
Assume that the real subspace $L = \langle\mathrm{Re}[\sigma_I],\mathrm{Im}[\sigma_I]\rangle \subset V_\bbR$
does not contain non-zero rational vectors. Then it follows from \cite{V2} that the 
orbit $\MC^\circ\cdot I$ is dense in $\Teich^\circ$. The proof consists of
two steps. First one shows that the $\Gamma$-orbit of $\Per(\sigma_I)$ is dense in $\Dom$
using Ratner's theory. Then one uses an additional argument to deal with non-separated points
in $\Teich^\circ$.
It is also possible to give a rather precise description
of the closures of other $\MC^\circ$-orbits in $\Teich^\circ$, but we will not do this,
because we will not be able to fully generalize this description to the case of the parabolic
Teichm\"uller space.  

Let us recall the statement of Ratner's theorem about orbit closures in the form convenient for us.
For a $\bbQ$-vector space $V$ let  $\mathbf{G}\subset SL(V)$ be a connected semisimple algebraic group
defined over $\bbQ$. The group of $\bbR$-points of $\mathbf{G}$, denoted $\mathbf{G}(\bbR)$,
carries a natural structure of a real Lie group. Let $G = \mathbf{G}(\bbR)^\circ$ be the identity component
of this Lie group. Let $\Gamma\subset \mathbf{G}(\bbQ)\cap G$ be an arithmetic lattice and $S\subset G$
be a Lie subgroup generated by its one-parameter unipotent subgroups.

\begin{thm}[Ratner \cite{Ra}; see also {\cite[Proposition 3.3.7]{KSS}}]\label{thm_Ratner}
In the above setting for any $g\in G$ we have $\overline{\Gamma g S} = \Gamma H g$,
where $H=\mathbf{H}(\bbR)^\circ$ and $\mathbf{H}\subset \mathbf{G}$ is the smallest
algebraic $\bbQ$-subgroup such that $H$ contains $gSg^{-1}$. 
\end{thm}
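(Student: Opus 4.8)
The plan is to deduce the statement from Ratner's orbit closure theorem in its standard homogeneous-dynamics form, combined with the arithmeticity of $\Gamma$; it is the latter that forces the resulting subgroup to be algebraic and defined over $\bbQ$.

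\emph{Reduction and application of Ratner's theorem.} First I would conjugate by $g$ so as to move the orbit through the base coset: setting $S' = gSg^{-1}$, one has $\Gamma g S = \Gamma S' g$, and since right translation by $g$ is a homeomorphism of $G$ it suffices to compute $\overline{\Gamma S'}$. Note that $S'$ is again a subgroup generated by one-parameter unipotent subgroups of $G$. Passing to $\Gamma\backslash G$, the set $\Gamma S'$ is the orbit of the base point $x_0 = \Gamma e$ under the right action of $S'$. Ratner's orbit closure theorem then provides a closed subgroup $L\subseteq G$ with $S'\subseteq L$ such that $\overline{x_0 S'} = x_0 L$, the orbit $x_0 L$ is closed, and it carries a finite $L$-invariant measure; since the $G$-stabilizer of $x_0$ is $\Gamma$, this last condition says that $\Gamma\cap L$ is a lattice in $L$. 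We take $L$ minimal with these properties; it is then connected, being generated by connected subgroups.

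\emph{Algebraicity and the $\bbQ$-structure.} The crux is that, $\Gamma$ being an arithmetic lattice in the semisimple $\bbQ$-group $\mathbf{G}$ and $L$ being generated by unipotent one-parameter subgroups with $\Gamma\cap L$ a lattice in $L$, one has $L = \mathbf{H}(\bbR)^\circ$ for a connected algebraic subgroup $\mathbf{H}\subseteq\mathbf{G}$ defined over $\bbQ$; this algebraicity is part of Ratner's theory, in the packaging of \cite[Proposition 3.3.7]{KSS}. It remains to identify $\mathbf{H}$ with the smallest algebraic $\bbQ$-subgroup whose real points contain $S'$. Such a smallest subgroup exists because an intersection of algebraic $\bbQ$-subgroups is again one and $\mathbf{G}\supseteq S'$; call it $\mathbf{H}_0$. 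On one hand $\mathbf{H}_0\subseteq\mathbf{H}$, since $\mathbf{H}(\bbR)^\circ = L\supseteq S'$. On the other hand, by standard results on arithmetic subgroups (Borel, Harish--Chandra) $\Gamma\cap\mathbf{H}_0(\bbR)^\circ$ is a lattice in $\mathbf{H}_0(\bbR)^\circ$, so $x_0\,\mathbf{H}_0(\bbR)^\circ$ is a closed finite-volume orbit containing $x_0 S'$, hence containing its closure $x_0 L$; since both are orbits through $x_0$ and $L$ is connected, this forces $L\subseteq\mathbf{H}_0(\bbR)^\circ$, i.e. $\mathbf{H}\subseteq\mathbf{H}_0$. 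Thus $\mathbf{H} = \mathbf{H}_0$.

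\emph{Conclusion and main obstacle.} Combining the above, $\overline{\Gamma S'} = \Gamma\,\mathbf{H}(\bbR)^\circ$ in $G$; translating back by $g$ yields $\overline{\Gamma g S} = \Gamma\,\mathbf{H}(\bbR)^\circ g = \Gamma H g$, with $\mathbf{H}$ the smallest algebraic $\bbQ$-subgroup of $\mathbf{G}$ such that $H = \mathbf{H}(\bbR)^\circ$ contains $gSg^{-1}$, as asserted. The substantive obstacle is the middle step: extracting from the purely dynamical conclusion of Ratner's theorem the fact that the orbit-closure subgroup $L$ is algebraic and $\bbQ$-rational, and matching minimality of $L$ in the dynamical sense with minimality of $\mathbf{H}$ in the algebraic sense. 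This is exactly the point where semisimplicity of $\mathbf{G}$ and arithmeticity of $\Gamma$ are used essentially, and it is the content carried by the references \cite{Ra} and \cite{KSS}.
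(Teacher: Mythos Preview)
The paper does not give its own proof of this theorem: it is stated as a citation to Ratner \cite{Ra} and the survey form in \cite[Proposition 3.3.7]{KSS}, and then used as a black box. So there is no ``paper's proof'' to compare against.

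Your sketch is a reasonable account of what underlies the cited result: conjugate to move the orbit through the identity coset, apply Ratner's orbit closure theorem in $\Gamma\backslash G$ to obtain a closed subgroup $L$ with $\Gamma\cap L$ a lattice, and then use arithmeticity of $\Gamma$ to force $L$ to be the identity component of the real points of a $\bbQ$-algebraic subgroup, finally matching dynamical minimality of $L$ with algebraic minimality of $\mathbf{H}$. One small point to be careful about in the second step: invoking Borel--Harish-Chandra to conclude that $\Gamma\cap\mathbf{H}_0(\bbR)^\circ$ is a lattice in $\mathbf{H}_0(\bbR)^\circ$ requires that $\mathbf{H}_0$ admit no nontrivial $\bbQ$-character; this is true here because $\mathbf{H}_0$ is generated (as a $\bbQ$-group) by unipotent one-parameter subgroups, but it is worth saying explicitly. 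You correctly flag that this algebraicity/minimality matching is the substantive content absorbed into the citations, and that is accurate.
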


\begin{cor}\label{cor_Ratner}
In the setting of the above theorem, consider the homogeneous space $G/S$
and its point $p = gS$. Assume that the smallest algebraic $\bbQ$-subgroup of $G$ containing $gSg^{-1}$
is $G$ itself. Then the $\Gamma$-orbit of $p$ is dense in $G/S$.
\end{cor}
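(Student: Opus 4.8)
The plan is to deduce Corollary \ref{cor_Ratner} directly from Theorem \ref{thm_Ratner} by specializing to the case $\mathbf{H} = \mathbf{G}$. The corollary is essentially a reformulation: it asks for density of a single $\Gamma$-orbit in a homogeneous space, whereas the theorem describes the closure of a double-coset set $\overline{\Gamma g S}$. I would first unwind the relationship between these two pictures. Consider the quotient map $\pi \colon G \to G/S$, $g \mapsto gS$. The preimage $\pi^{-1}(\overline{\Gamma \cdot p})$ of the closure of the $\Gamma$-orbit of $p = gS$ is exactly $\overline{\Gamma g S}$, since $\pi$ is an open continuous map and $\pi^{-1}(\Gamma \cdot p) = \Gamma g S$. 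Conversely $\pi(\overline{\Gamma g S}) = \overline{\Gamma \cdot p}$. So density of $\Gamma \cdot p$ in $G/S$ is equivalent to $\overline{\Gamma g S} = G$.

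Given the hypothesis, the smallest algebraic $\bbQ$-subgroup $\mathbf{H} \subset \mathbf{G}$ with $H = \mathbf{H}(\bbR)^\circ \supset g S g^{-1}$ is $\mathbf{G}$ itself, so $H = \mathbf{G}(\bbR)^\circ = G$. Plugging this into the conclusion of Theorem \ref{thm_Ratner}, we get $\overline{\Gamma g S} = \Gamma H g = \Gamma G g = G$, the last equality because $\Gamma \subset G$ and $g \in G$ so $\Gamma G g = G$. Applying $\pi$ then yields $\overline{\Gamma \cdot p} = \pi(\overline{\Gamma g S}) = \pi(G) = G/S$, which is the desired density.

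The one subtlety worth spelling out — and the step I would be most careful about — is the verification that $S$ (being generated by one-parameter unipotent subgroups of $G$) indeed satisfies the hypotheses of Theorem \ref{thm_Ratner} in the case at hand, and that the passage from ``$\overline{\Gamma g S} = G$'' to ``$\overline{\Gamma \cdot p} = G/S$'' genuinely uses openness of $\pi$ rather than merely continuity. Continuity alone only gives $\pi(\overline{\Gamma g S}) \subseteq \overline{\pi(\Gamma g S)} = \overline{\Gamma\cdot p}$; for the reverse inclusion one uses that $\pi$ is an open surjection, so $\pi$ of a dense subset is dense, and $\pi(G) = G/S$. That is the main (and only) real obstacle, and it is a routine point-set argument. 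Everything else is a direct substitution $\mathbf{H} = \mathbf{G}$ into the statement already granted, so the proof is short.

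\begin{proof}
Let $\pi\colon G\to G/S$ be the quotient map; it is an open continuous surjection.
By hypothesis the smallest algebraic $\bbQ$-subgroup $\mathbf{H}\subset\mathbf{G}$
with $H := \mathbf{H}(\bbR)^\circ \supset gSg^{-1}$ equals $\mathbf{G}$, so $H = G$.
Theorem \ref{thm_Ratner} then gives $\overline{\Gamma g S} = \Gamma H g = \Gamma G g = G$,
the last equality because $\Gamma\subset G$ and $g\in G$.
Since $\pi$ is open, it maps dense subsets to dense subsets; as $\Gamma g S = \pi^{-1}(\Gamma\cdot p)$
is dense in $G$, its image $\Gamma\cdot p$ is dense in $G/S$.
\end{proof}
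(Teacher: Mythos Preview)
Your proof is correct and matches the paper's (implicit) approach: the paper states the corollary without proof, treating it as an immediate consequence of Theorem~\ref{thm_Ratner} via the substitution $\mathbf{H}=\mathbf{G}$, which is exactly what you do. One minor remark: in your discussion you worry that continuity of $\pi$ alone is insufficient, but in fact it is---from $\overline{\Gamma g S}=G$ and continuity you get $G/S=\pi(G)=\pi(\overline{\Gamma g S})\subseteq\overline{\pi(\Gamma g S)}=\overline{\Gamma\cdot p}$, so openness is not needed (though your argument via openness is of course also valid).
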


\subsection{The parabolic Teichm\"uller space and its period domain}\label{sec_def_domains}

The parabolic Teichm\"uller space parametrizes complex structures of hyperk\"ahler
type on $X$ together with a parabolic cohomology class. As before, we denote
$V = H^2(X,\bbQ)$. For a complex structure of hyperk\"ahler type $I$ denote
by $V^{p,q}_I \subset V_\bbC$ the corresponding Hodge components and
$V^{1,1}_{I,\bbR} = V^{1,1}_I\cap V_\bbR$. Recall that $\alpha\in V^{1,1}_{I,\bbR}$
is called parabolic for $I$ if $q(\alpha) = 0$ and $\alpha$ is nef, i.e.\ $\alpha\in\overline{\KK}_{X_I}$.
Since $\Diff^\circ(X)$ acts trivially on $V$, the notion of a parabolic class depends only
on the isotopy class of $I$, i.e.\ on the point of $\Teich$ corresponding to $I$.

Consider the topological space $\Teich\times V_\bbR$ with the natural action of the
mapping class group $\MC$.
We define the {\it parabolic Teichm\"uller space} as follows:
$$
\TeichP(X) = \{ (I,\alpha)\in \Teich\times V_\bbR \st 0\neq \alpha\mbox{ is parabolic for }I \}.
$$
It is clear that the action of $\MC$ preserves $\TeichP$.
We consider $\TeichP$ merely as a topological space and do not introduce a manifold structure on it. It clearly admits a natural
map $\tau_p\colon \TeichP\to \Teich$ whose fibre is the set of non-zero parabolic classes for a given point in $\Teich$.
For the fixed connected component $\Teich^\circ$ we will also denote
$$
\TeichP^\circ = \TeichP\cap (\Teich^\circ \times V_\bbR).
$$
It is clear that $\MC^\circ$ acts on $\TeichP^\circ$.

Let us now introduce the {\it parabolic period domain}:
$$
\DomP = \{ (L,u)\in \Dom\times V_\bbR \st 0\neq u\in L^\perp,\, q(u) = 0\}.
$$
There is a natural projection $\DomP\to \Dom$ whose fibres are the non-zero isotropic elements
of type $(1,1)$ for a given point in $\Dom$. The period map $\Per\colon \Teich\to\Dom$
clearly extends to the parabolic period map $\PerP\colon \TeichP\to \DomP$.

Given a subspace $N\subset V$ such that $q|_N$ is negative definite, let us consider
the subdomain $\DD_N = \{ L \in \DD\st L\perp N\}$ in the period domain. The points of $\DD_N$
correspond to the periods of those complex structures on $X$ whose N\'eron--Severi group contains
the fixed negative lattice $N_\bbZ=N\cap H^2(X,\bbZ)$. We will denote by $\DD_{N,p}$, $\Teich_{N,p}$
and $\Teich_{N,p}^\circ$ the preimages of $\DD_N$ in $\DD_p$, $\Teich_p$ and $\Teich_p^\circ$ respectively. 

\subsection{Lie algebras and the parabolic period domain}\label{sec_LieDomP}

To study the $\Gamma$-orbits in the parabolic period domain $\DomP$
we will need some preparations. In this subsection we will
study several homogeneous spaces related to $\DomP$.
The contents of this subsection are purely Lie-theoretic,
and we will combine them with Ratner's theory in subsection \ref{sec_orbitsDomP}.

We will still denote by $V$ a $d$-dimensional vector space over $\bbQ$
together with a non-degenerate quadratic form $q\in S^2V^*$,
but the signature of $q$ will be $(\ell+1,d-\ell-1)$, where $\ell = 0$, $1$ or $2$. 
As before, we denote $V_\bbR = V\otimes_\bbQ \bbR$ and $V_\bbC = V\otimes_\bbQ \bbC$.
We will consider homogeneous spaces that parametrize pairs $(L,u)$, where
$L\subset V_\bbR$ is a positive subspace, $\dim(L)=\ell$, and $0\neq u\in L^\perp$ is an isotropic vector.
The signature of $q|_{L^\perp}$ is $(1,d-\ell-1)$.
In the case $\ell = 0$ we simply consider the space of isotropic vectors in $V_\bbR$.
The case $\ell = 2$ corresponds to the parabolic period domain $\DomP$.

We will denote by $\SO^\circ(V_\bbR,q)$ the identity connected component of the orthogonal Lie group.
Given a positive subspace $L\subset V_\bbR$ and an isotropic vector $0\neq u\in L^\perp$ as above,
consider the homogeneous space
\begin{eqnarray*}
\HH_\ell &=& \SO^\circ(V_\bbR,q)/\SO(L)\times \SO^\circ(L^\perp)^u\\
 &\simeq& \SO^\circ(\ell+1,d-\ell-1)/\SO(\ell)\times \SO^\circ(1,d-\ell-1)^u,
\end{eqnarray*}
where $\SO^\circ(L^\perp)^u \simeq \SO^\circ(1,d-\ell-1)^u$ is the stabilizer of $u$ in $\SO^\circ(L^\perp)\simeq \SO^\circ(1,d-\ell-1)$.
Note that $\HH_2\simeq \DomP$.

Denote by $\frso(L^\perp)^u$ the Lie algebra of the group $\SO^\circ(L^\perp)^u$.
This is the Lie subalgebra of $\frso(L^\perp)$ that annihilates $u$.
This subalgebra acts on $u^\perp/\bbR u$, and we have a short exact sequence of Lie algebras
$$
0\to \frr(L,u)\to \frso(L^\perp)^u \to \frso(u^\perp/\bbR u)\to 0,
$$
where $\frr(L,u)$ is the radical of $\frso(L^\perp)^u$.
Recall that the Lie algebra $\frso(V)$ may be identified with $\Lambda^2 V$
via the quadratic form $q$: a decomposable bivector $v\wdg w$ corresponds
to the element of $\frso(V)$ given by the endomorphism $x\mapsto q(x,w)v - q(x,v)w$.
The algebra $\frr(L,u)$ is abelian and under the above identification of $\frso(V)$ with $\Lambda^2 V$
the subalgebra $\frr(L,u)$ corresponds to the subspace $u\wdg (L+ \bbR u)^\perp$.

Denote by $R(L,u)\subset \SO^\circ(L^\perp)^u$ the unipotent subgroup with
the Lie algebra $\frr(L,u)$. Note that we can not apply Theorem \ref{thm_Ratner}
directly to the homogeneous space $\HH_\ell$, since the stabilizer subgroup
$\SO(L)\times \SO^\circ(L^\perp)^u$ is not generated by unipotents.
So we have to pass to a bigger homogeneous space
\begin{eqnarray}
\widetilde{\HH}_\ell = \SO^\circ(V_\bbR,q)/R(L,u)\label{eqn_Hbig}
\end{eqnarray}
that admits a proper map $\widetilde{\HH}_\ell \to \HH_\ell$.
In order to apply Ratner's theory (Corollary \ref{cor_Ratner} of Theorem \ref{thm_Ratner}) we
set $\mathbf{G} = \SO(V,q)$, $S = R(L,u)$, $\Gamma\subset \mathbf{G}(\bbQ)$ an
arithmetic lattice, and $\mathbf{H}\subset \mathbf{G}$ the smallest algebraic
$\bbQ$-subgroup such that $S\subset \mathbf{H}(\bbR)^\circ$. We have
to show that $\mathbf{H} = \mathbf{G}$, and for this we would like to describe
the minimal $\bbQ$-Lie subalgebra $\frh\subset \frso(V,q)$
such that $\frh_\bbR$ contains $\frr(L,u)$, for a generic pair $(L,u)$.

\begin{prop}\label{prop_algebra}
Assume that $u^\perp$ does not contain non-zero rational vectors.
Denote $\dim(V) = d$ and $\dim(L) = \ell$.
Let $\frh\subset \frso(V,q)$ be a $\bbQ$-Lie subalgebra 
such that $\frh_\bbR$ contains $\frr(L,u)$, where one of the following
conditions is satisfied:
\begin{enumerate}
\item $\ell = 0$ and $d\ge 5$.
\item $\ell = 1$ and $d\ge 6$,
\item $\ell = 2$ and $d\ge 7$,
\end{enumerate}
Then $\frh = \frso(V,q)$.
\end{prop}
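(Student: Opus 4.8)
### Proof proposal

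The plan is to argue that the minimal rational Lie algebra $\frh$ containing $\frr(L,u)$ must be an ideal-free, irreducible subalgebra of $\frso(V,q)$ and then to bootstrap from the irrationality of $u^\perp$ to conclude $\frh = \frso(V,q)$. First I would record the structural facts: under the identification $\frso(V,q)\simeq\Lambda^2 V$, the abelian nilpotent algebra $\frr(L,u)$ equals $u\wedge(L+\bbR u)^\perp$, so it is built from the single isotropic vector $u$ together with the $(d-\ell-1)$-dimensional space $M := (L+\bbR u)^\perp$ of vectors orthogonal to both $u$ and $L$. The key point is that $u$ is strongly irrational (its $\bbR$-span meets $V_\bbQ$ only in $0$, since $u^\perp\cap V_\bbQ$ would otherwise be forced to be all of $V_\bbQ$, contradicting $q$ nondegenerate), so $\frh$ cannot "see" $u$ or $M$ individually — it must be defined over $\bbQ$ while containing operators constructed from irrational data.

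The main step is a Galois/Zariski-density argument. Since $\frh$ is a $\bbQ$-subalgebra, $\frh_\bbR$ is stable under any element of $\Gamma$ (more precisely, $\frh_\bbR$ is the real span of a $\bbQ$-structure, so it is invariant under $\operatorname{Aut}(\bbR/\bbQ)$-conjugation of the defining data, and more usefully it is invariant under the Zariski closure of the orbit of $(L,u)$). I would exploit that as $(L,u)$ ranges over a dense subset of $\HH_\ell$ (all pairs with $u^\perp\cap V_\bbQ = 0$, which is dense), the subspaces $\frr(L,u)$ sweep out a set whose $\bbR$-span is all of $\frso(V,q)$: indeed the bivectors $u\wedge m$ for varying isotropic $u$ and varying $m\in M$ already span $\Lambda^2 V_\bbR$ when $d$ is large enough. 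Since $\frh$ is the \emph{minimal} such $\bbQ$-algebra and the construction is "generic," one shows $\frh$ cannot be a proper $\bbQ$-subalgebra: any proper semisimple or parabolic-type $\bbQ$-subalgebra of $\frso(V,q)$ either preserves a nontrivial rational subspace of $V$ or a rational isotropic flag, and containing $u\wedge M$ with $u,M$ irrational forces this rational subspace to be $0$ or $V$. The dimension hypotheses ($d\ge 5,6,7$ for $\ell = 0,1,2$) are exactly what is needed to rule out the small-rank exceptional behaviour: e.g. $\frso(3,d-3)$ for small $d$ can be $\frsl_2$-like or split as a sum of low-rank factors, and the bound on $d-\ell$ guarantees $L^\perp$ has signature $(1,d-\ell-1)$ with $d-\ell-1\ge 3$, so $\frso(L^\perp)$ is simple of rank $\ge 2$.

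I expect the hard part to be the careful case analysis classifying the possible proper $\bbQ$-subalgebras of $\frso(V,q)$ containing an abelian unipotent algebra of the form $u\wedge M$ — i.e. showing that the only obstruction to $\frh = \frso(V,q)$ would be a nonzero proper rational subspace or isotropic subspace preserved by $\frh$, and then ruling that out by irrationality of $u$. I would organise it as: (i) $\frh$ is not solvable (it contains a nonabelian piece once we take brackets $[u\wedge m_1, u\wedge m_2]$ combined with the semisimple part that must appear), (ii) the semisimple part $\frh^{ss}$ acts on $V_\bbQ$ with at most a few invariant subspaces, all forced to be $0$ or $V$ by irrationality, hence $\frh^{ss}$ acts irreducibly, (iii) an irreducible semisimple $\bbQ$-subalgebra of $\frso(V,q)$ of rank growing with $d$, containing a long-root $\frsl_2$ worth of unipotents ($u\wedge m$ is a highest root vector), must be all of $\frso(V,q)$ — here the low-dimensional bounds enter to exclude $\frsl_2$, $\frso(3)^{\oplus k}$, $\frg_2$, $\frso(7)\hookrightarrow\frso(8)$ triality embeddings and similar sporadic inclusions. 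Assembling these gives $\frh = \frso(V,q)$, and the case $\ell = 2$ is the one feeding directly into Ratner's theorem for $\DomP$.
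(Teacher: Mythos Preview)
Your proposal has genuine gaps at the key steps. First, the Galois argument you invoke does not work over $\bbR$: the group $\Aut(\bbR/\bbQ)$ is trivial, so saying ``$\frh_\bbR$ is invariant under $\Aut(\bbR/\bbQ)$-conjugation of the defining data'' gives you nothing. The paper's actual mechanism is to complexify and use $\Aut(\bbC/\bbQ)$: since $\frh$ is defined over $\bbQ$, the complexification $\frh_\bbC$ contains every conjugate $\frr(L^\xi,u^\xi)$ for $\xi\in\Aut(\bbC/\bbQ)$, and irrationality of $u^\perp$ forces the conjugates $u^\xi$ to span $V_\bbC$. One then picks a basis $u_1,\ldots,u_d$ of such conjugates and computes brackets between the \emph{different} subalgebras $\frr(L_i,u_i)$ and $\frr(L_j,u_j)$ to produce all $u_i\wedge u_j$, hence all of $\Lambda^2 V_\bbC$. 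The dimension bounds enter in these bracket computations (Lemmas \ref{lem_dim0}--\ref{lem_dim2}) to guarantee that certain intersections $W_i\cap W_j$ are non-isotropic. This is the missing idea in your sketch.

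Second, two of your concrete steps are wrong as stated. Your step (i) proposes to get a nonabelian piece from brackets $[u\wedge m_1,u\wedge m_2]$, but $\frr(L,u)$ is abelian: since $q(u)=0$ and $m_i\perp u$, every such bracket vanishes. And your ``varying $(L,u)$ over a dense subset'' argument confuses the logic: we are given a single fixed $(L,u)$ and a single fixed $\bbQ$-algebra $\frh$ containing $\frr(L,u)$; the fact that other generic pairs exist is irrelevant unless you produce them \emph{inside} $\frh$, which is exactly what the Galois-conjugate trick does. Your alternative route via classifying proper $\bbQ$-subalgebras of $\frso(V,q)$ and ruling out rational invariant subspaces could in principle be made to work, but you have not executed it, and the sketch (invariant subspaces, root-vector arguments, excluding sporadic embeddings) would require substantially more structure theory than the direct bracket computation the paper uses.
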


The proof of this proposition is given below, see \ref{prop_proof}.
Before proving it we need to establish a number of auxiliary
results. As before, we identify $\frso(V,q)$ with $\Lambda^2V$, and
under this identification the Lie bracket of two decomposable vectors
can be expressed as follows:
$$
[a\wdg b,c\wdg d] = q(b,c)a\wdg d - q(a,c)b\wdg d + q(a,d)b\wdg c - q(b,d)a\wdg c.
$$
We will often tacitly use this expression below.
We will need to work with the complexification $V_\bbC$ and 
consider complex subalgebras $\frr(L,u) = u\wdg (L+ \bbC u)^\perp \subset \frso(V_\bbC)$
in one of the following settings:
\begin{equation}\label{eqn_set1}
L\subset V_\bbC,\quad\! \dim(L) = 1,\quad\! 0\neq u\in L^\perp,\quad\! \dim(L+\bbC u) = 2,\quad\! q(u) = 0,
\end{equation}
or
\begin{equation}\label{eqn_set2}
L\subset V_\bbC,\quad\!\! \dim(L) = 2,\quad\!\! 0\neq u\in L^\perp,\quad\!\! q|_L \mbox{ non-degenerate,} \quad\!\! q(u) = 0.
\end{equation}

\begin{lem}\label{lem_bracket}
Assume that $\frh_\bbC\subset\frso(V_\bbC)$ is a Lie subalgebra
that contains $u_1\wdg W_1$ and $u_2\wdg W_2$ for some non-zero isotropic vectors $u_1$, $u_2\in V_\bbC$
and subspaces $W_1\subset u_1^\perp$, $W_2\subset u_2^\perp$.
Assume that there exists $v\in W_1\cap W_2$ with $q(v) \neq 0$. Then $u_1\wdg u_2\in \frh_\bbC$.
\end{lem}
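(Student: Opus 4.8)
The plan is to realize $u_1\wdg u_2$, up to a nonzero scalar, as a single Lie bracket of two elements already known to lie in $\frh_\bbC$, exploiting that $v$ sits in both $W_1$ and $W_2$.

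First, since $v\in W_1$ and $v\in W_2$, the hypotheses $u_1\wdg W_1\subset\frh_\bbC$ and $u_2\wdg W_2\subset\frh_\bbC$ yield $u_1\wdg v\in\frh_\bbC$ and $u_2\wdg v\in\frh_\bbC$; as $\frh_\bbC$ is a Lie subalgebra it then contains $[u_1\wdg v,\,u_2\wdg v]$. Next I would expand this bracket using the formula for the bracket of decomposable bivectors recalled just before the lemma, with $a=u_1$, $b=v$, $c=u_2$, $d=v$:
\[
[u_1\wdg v,\,u_2\wdg v]=q(v,u_2)\,u_1\wdg v-q(u_1,u_2)\,v\wdg v+q(u_1,v)\,v\wdg u_2-q(v,v)\,u_1\wdg u_2 .
\]
The hypotheses now make three of the four terms vanish: $v\in W_1\subset u_1^\perp$ gives $q(u_1,v)=0$, $v\in W_2\subset u_2^\perp$ gives $q(u_2,v)=q(v,u_2)=0$, and $v\wdg v=0$. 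Hence $[u_1\wdg v,\,u_2\wdg v]=-q(v)\,u_1\wdg u_2$, and since $q(v)\neq 0$ we may divide to obtain $u_1\wdg u_2=-q(v)^{-1}[u_1\wdg v,\,u_2\wdg v]\in\frh_\bbC$.

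There is essentially no obstacle: the entire argument is this one bracket computation. The only thing to be careful about is the bookkeeping that forces three of the four terms to drop — which is exactly the content of the orthogonality relations $W_i\subset u_i^\perp$ together with $v\wdg v=0$ — and the observation that the coefficient $-q(v)$ of the surviving term $u_1\wdg u_2$ is nonzero by hypothesis, which is precisely what lets one solve for $u_1\wdg u_2$. Note that the isotropy of $u_1$ and $u_2$ is not even used.
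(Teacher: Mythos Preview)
Your proof is correct and follows exactly the same approach as the paper, which simply states that $[u_1\wdg v,\,u_2\wdg v]$ is a nonzero multiple of $u_1\wdg u_2$; you have merely written out the one-line computation in full and noted (correctly) that the isotropy of $u_1,u_2$ plays no role.
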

\begin{proof}
The Lie bracket $[u_1\wdg v, u_2\wdg v]$ is a non-zero multiple of $u_1\wdg u_2$.
\end{proof}

Next we consider separately the three cases $\dim(L) = 0$, $1$ or $2$. We work
under the assumptions of Proposition \ref{prop_algebra}.

\subsubsection{The case $\dim(L) = 0$}

\begin{lem}\label{lem_dim0}
Assume that $d\ge 5$ and $\frh_\bbC\subset\frso(V_\bbC)$ is a Lie subalgebra
that contains $u_1\wdg W_1$ and $u_2\wdg W_2$ for some non-zero isotropic vectors $u_1$, $u_2\in V_\bbC$
and hyperplanes $W_1 = u_1^\perp$, $W_2 = u_2^\perp$. Then $u_1\wdg u_2\in \frh_\bbC$.
\end{lem}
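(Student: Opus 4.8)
The plan is to show $u_1\wdg u_2 \in \frh_\bbC$ directly, exploiting the bracket computation of Lemma \ref{lem_bracket}. The obstruction to applying Lemma \ref{lem_bracket} verbatim is that it requires a vector $v$ with $q(v)\neq 0$ lying in \emph{both} $W_1 = u_1^\perp$ and $W_2 = u_2^\perp$; such a vector exists precisely when $q$ restricted to $u_1^\perp\cap u_2^\perp$ is not identically zero. So the first step is to analyze the intersection $u_1^\perp\cap u_2^\perp$. If $u_1,u_2$ are linearly independent, this intersection is $\langle u_1,u_2\rangle^\perp$, which has dimension $d-2\ge 3$, and its radical is contained in $\langle u_1,u_2\rangle^\perp\cap\langle u_1,u_2\rangle$. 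Since $q$ is non-degenerate on all of $V_\bbC$, the form $q$ restricted to $\langle u_1,u_2\rangle^\perp$ has radical exactly $\langle u_1,u_2\rangle\cap\langle u_1,u_2\rangle^\perp$, which is at most $2$-dimensional; hence when $d\ge 5$ the quotient by the radical is non-zero, so there is $v\in u_1^\perp\cap u_2^\perp$ with $q(v)\neq 0$. Then Lemma \ref{lem_bracket} applies with $W_1\cap W_2 = \langle u_1,u_2\rangle^\perp \ni v$, giving $u_1\wdg u_2\in\frh_\bbC$, since $u_1\wdg v$ and $u_2\wdg v$ both lie in $u_1\wdg W_1$ and $u_2\wdg W_2$ respectively (using $v\in W_1$ and $v\in W_2$), and their bracket $[u_1\wdg v,u_2\wdg v] = q(v,u_2)\,u_1\wdg v - q(v)\,u_2\wdg v + q(v)\,u_2\wdg v - \ldots$ — more carefully, $[u_1\wdg v, u_2\wdg v]$ reduces, using $q(u_1,v)=q(u_2,v)=0$, to $-q(v,v)\,u_1\wdg u_2$ up to sign, which is a nonzero multiple of $u_1\wdg u_2$.

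The remaining case is when $u_1$ and $u_2$ are proportional, say $u_2 = \lambda u_1$ with $\lambda\in\bbC^*$. Then $W_1 = W_2 = u_1^\perp$, and $u_1\wdg W_1 = u_2\wdg W_2$ is already a single abelian subalgebra of $\frso(V_\bbC)$ of dimension $d-1$. Here the conclusion $u_1\wdg u_2 = \lambda\,u_1\wdg u_1 = 0$ holds trivially, so there is nothing to prove. Thus the whole argument splits into this trivial degenerate case and the generic case handled by the dimension count above.

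I expect the only genuine point requiring care is the precise bracket identity confirming that $[u_1\wdg v,u_2\wdg v]$ is a nonzero multiple of $u_1\wdg u_2$ — one must keep track of which terms vanish because $u_1,u_2$ are isotropic and orthogonal to $v$ — but this is exactly the computation already packaged in Lemma \ref{lem_bracket}, so it can simply be cited. The only substantive ingredient beyond that citation is the linear-algebra observation that the restriction of a non-degenerate form to the orthogonal complement of a $2$-dimensional subspace cannot be identically zero once $d\ge 5$; this is immediate from the fact that the radical of such a restriction has dimension at most $2$. Hence $d\ge 5$ is exactly the threshold that makes the argument work.
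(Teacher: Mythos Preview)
Your proposal is correct and follows essentially the same approach as the paper: both arguments show that $W_1\cap W_2$ contains a vector $v$ with $q(v)\neq 0$ and then invoke Lemma~\ref{lem_bracket}. The paper's version is slightly more streamlined---it argues uniformly that $W_1\cap W_2$ has codimension at most $2$, hence dimension $\ge d-2\ge 3$, so it cannot be totally isotropic---which absorbs your degenerate case $u_1\parallel u_2$ without a separate case split; your radical argument is an equivalent way to see the same non-isotropy.
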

\begin{proof}
The codimension of $W_1\cap W_2$ in $V_\bbC$ is at most 2. The assumption $d\ge 5$ implies
that the subspace $W_1\cap W_2$ is not isotropic. We conclude by applying Lemma \ref{lem_bracket}. 
\end{proof}

\subsubsection{The case $\dim(L) = 1$}

\begin{lem}\label{lem_dim1}
Assume that $d\ge 7$ and that $\frh_\bbC\subset\frso(V_\bbC)$ is a Lie subalgebra
that contains $\frr(L_1,u_1)$, $\frr(L_2,u_2)$ for some $(L_1,u_1)$, $(L_2,u_2)$
as in (\ref{eqn_set1}). Then $u_1\wdg u_2\in \frh_\bbC$.
\end{lem}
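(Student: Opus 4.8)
The plan is to reduce everything to Lemma \ref{lem_bracket}. Write $W_i = (L_i + \bbC u_i)^\perp$, so that $\frr(L_i,u_i) = u_i \wdg W_i \subseteq \frh_\bbC$; since $\dim(L_i + \bbC u_i) = 2$ we have $\dim W_i = d - 2$, and clearly $W_i \subseteq u_i^\perp$ while $u_i$ is a non-zero isotropic vector. If $W_1 \cap W_2$ contains a vector of non-zero square, then Lemma \ref{lem_bracket} applied with these $W_1$, $W_2$ gives $u_1 \wdg u_2 \in \frh_\bbC$ at once. Hence the only case requiring work is the degenerate one, in which $W_1 \cap W_2$ is totally isotropic; I expect this to be the main obstacle.

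Assume we are in that case. I would put $U = (L_1 + \bbC u_1) + (L_2 + \bbC u_2)$, so that $\dim U \le 4$ and $W_1 \cap W_2 = U^\perp$. Since $U^\perp$ is totally isotropic and $q$ is non-degenerate on $V_\bbC$, we get $U^\perp \subseteq (U^\perp)^\perp = U$, hence $\dim U^\perp \le \dim U$; together with $\dim U \le 4$ this forces $\dim U = 4$, and in particular $d \le 8$. Then the radical of $q|_U$ equals $U \cap U^\perp = U^\perp$, which has dimension $d - 4$, so $q$ descends to a non-degenerate quadratic form on the $\bbC$-vector space $U/U^\perp$ of dimension $8 - d \le 1$. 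Such a space has no non-zero isotropic vector, so the isotropic vector $u_1 \in U$ must lie in $U^\perp = W_1 \cap W_2 \subseteq W_2$. Therefore $u_1 \wdg u_2 = -(u_2 \wdg u_1) \in u_2 \wdg W_2 = \frr(L_2, u_2) \subseteq \frh_\bbC$, which is what we want.

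What makes the degenerate case go through is the numerology $\dim U = 4 \ge d/2$, which simultaneously forces $d \le 8$ and leaves essentially no slack beyond the hypothesis $d \ge 7$; note also that the argument uses neither positivity nor even non-degeneracy of $q|_{L_i}$, so it applies in the full generality of setting (\ref{eqn_set1}) (where $L_i$ may be isotropic). Everything else — that the two cases "$W_1 \cap W_2$ contains a non-isotropic vector" and "$W_1 \cap W_2$ is totally isotropic" are exhaustive, and that the hypotheses of Lemma \ref{lem_bracket} are met in the first — is routine.
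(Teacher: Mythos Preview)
Your proof is correct and follows essentially the same approach as the paper: both reduce to Lemma \ref{lem_bracket} when $W_1\cap W_2$ contains a non-isotropic vector, and dispose of the totally isotropic case by the same dimension count (the paper packages it as ``$u_1\notin W_1\cap W_2$ forces $Z=\bbC u_1 + (W_1\cap W_2)$ to be an isotropic subspace of dimension $\ge d-3$, impossible for $d\ge 7$'', while you pass to the quotient $U/U^\perp$ to conclude $u_1\in U^\perp=W_1\cap W_2$). These are contrapositive formulations of the same inequality, so the arguments are equivalent.
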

\begin{proof}
Let $W_1 = (L_1 + \bbC u_1)^\perp$ and $W_2 = (L_2 + \bbC u_2)^\perp$.
If $u_1\in W_1\cap W_2$, then $u_2\wdg u_1\in \frr(L_2,u_2)\subset \frh_\bbC$
by the definition of $\frr(L_2,u_2)$, which completes the proof in this case.

Assume now that $u_1\notin W_1\cap W_2$. By Lemma \ref{lem_bracket} it suffices
to check that $W_1\cap W_2$ is not an isotropic subspace of $V_\bbC$.
We argue by contradiction and assume that $W_1\cap W_2$ is isotropic.
Note that $u_1 \perp W_1\cap W_2$. It follows that the subspace $Z = \bbC u_1 + W_1\cap W_2$
is also isotropic. Under our assumptions $W_1$, $W_2$ are of codimension $2$ and $Z$ has codimension
not bigger than $3$. This contradicts the assumption that $d\ge 7$ and
shows that $W_1\cap W_2$ is not isotropic.
\end{proof}

Under an additional assumption we can slightly strengthen the above lemma.

\begin{lem}\label{lem_dim1_strong}
Assume that $d\ge 6$ and that $\frh_\bbC\subset\frso(V_\bbC)$ is a Lie subalgebra
that contains $\frr(L_1,u_1)$, $\frr(L_2,u_2)$ for some $(L_1,u_1)$, $(L_2,u_2)$
as in (\ref{eqn_set1}). Assume moreover that $q|_{L_1}$ is non-zero. Then $u_1\wdg u_2\in \frh_\bbC$.
\end{lem}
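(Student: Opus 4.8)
The plan is to run the argument from the proof of Lemma \ref{lem_dim1}, but with a sharper bound on the dimension of a totally isotropic subspace, taking advantage of the extra hypothesis $q|_{L_1}\neq 0$. Write $L_i=\bbC\ell_i$ and $W_i=(L_i+\bbC u_i)^\perp$, so that $\frr(L_i,u_i)=u_i\wdg W_i\subset\frh_\bbC$, each $W_i\subset u_i^\perp$, and $\dim W_i=d-2$ by the constraints in (\ref{eqn_set1}).

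The first step is to unwind what $q(\ell_1)\neq 0$ buys us. Since $q(u_1)=0$, $q(\ell_1,u_1)=0$ and $q(\ell_1)\neq 0$, one gets $u_1\in W_1$, and the radical of $q|_{W_1}$ equals $W_1\cap(L_1+\bbC u_1)=\bbC u_1$, a line (here I use $W_1^\perp=L_1+\bbC u_1$). Hence $W_1/\bbC u_1$ is a non-degenerate quadratic space of dimension $d-3$, so every totally isotropic subspace of $W_1$ has dimension at most $1+\lfloor(d-3)/2\rfloor$, and for $d\ge 6$ this is strictly smaller than $d-3$.

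The second step is a case distinction according to whether $u_1\in W_2$. If $u_1\in W_2$, then $u_2\wdg u_1\in u_2\wdg W_2=\frr(L_2,u_2)\subset\frh_\bbC$ and we are done. Otherwise $u_1\notin W_1\cap W_2$, so $Z:=\bbC u_1+(W_1\cap W_2)\subseteq W_1$ has dimension $1+\dim(W_1\cap W_2)\ge 1+(2(d-2)-d)=d-3$, which by the first step is too large for $Z$ to be totally isotropic. But $u_1$ is isotropic and orthogonal to $W_1\cap W_2$ (which lies in $W_1\subseteq u_1^\perp$), so if $q$ vanished identically on $W_1\cap W_2$ it would also vanish on $Z$; therefore there is $v\in W_1\cap W_2$ with $q(v)\neq 0$, and Lemma \ref{lem_bracket} applied to $u_1\wdg W_1$, $u_2\wdg W_2$ and this $v$ yields $u_1\wdg u_2\in\frh_\bbC$.

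I expect the only delicate point to be the identification $\mathrm{rad}(q|_{W_1})=\bbC u_1$ together with the ensuing Witt-index count over $\bbC$: this is precisely where $q|_{L_1}\neq 0$ enters, and it is what lets the bound $d\ge 7$ of Lemma \ref{lem_dim1} drop to $d\ge 6$ (when $q|_{L_1}=0$ the space $W_1$ can be so degenerate that maximal isotropic subspaces of $W_1$ are one dimension larger). Once that estimate is in place, the remainder is a routine variant of the earlier argument.
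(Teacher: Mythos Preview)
Your argument is correct and follows essentially the same route as the paper: both proofs reduce to showing that $W_1\cap W_2$ cannot be totally isotropic by observing that otherwise $Z=\bbC u_1+(W_1\cap W_2)$ would be an isotropic subspace of dimension at least $d-3$, which is ruled out by a Witt-index count exploiting $q|_{L_1}\neq 0$. The only cosmetic differences are that the paper works inside the non-degenerate space $L_1^\perp$ rather than inside $W_1$ (whose radical you first identify as $\bbC u_1$), and that the paper treats only the new case $d=6$ after invoking Lemma~\ref{lem_dim1} for $d\ge 7$, whereas you handle all $d\ge 6$ uniformly.
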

\begin{proof} The proof is similar to that of Lemma \ref{lem_dim1}. We only need to consider
the case $d = 6$. We have to check that $W_1\cap W_2$ is not isotropic, assuming that $u_1\notin W_1\cap W_2$.
Note that $W_1\cap W_2\subset L_1^\perp$, and under our assumptions $q|_{L_1^\perp}$ is
non-degenerate. We have $\dim(L_1^\perp) = 5$ and $u_1\in L_1^\perp$. If $W_1\cap W_2$ were isotropic,
then $Z = \bbC u_1 + W_1\cap W_2$ would be an isotropic subspace of $L_1^\perp$ of dimension at least 3,
and this is impossible.
\end{proof}

\subsubsection{The case $\dim(L) = 2$}

\begin{lem}\label{lem_dim2}
Assume that $d\ge 7$ and that $\frh_\bbC\subset\frso(V_\bbC)$ is a Lie subalgebra
that contains $\frr(L_1,u_1)$, $\frr(L_2,u_2)$ for some $(L_1,u_1)$, $(L_2,u_2)$
as in (\ref{eqn_set2}).
Denote $W_1 = (L_1 + \bbC u_1)^\perp$ and $W_2 = (L_2 + \bbC u_2)^\perp$.
Then one of the following possibilities holds.
\begin{enumerate}
\item either $u_1\wdg u_2\in \frh_\bbC$;
\item or $\exists v\in u_2^\perp$, $v\notin W_2$ such that $u_2\wdg v\in \frh_\bbC$
\end{enumerate}
\end{lem}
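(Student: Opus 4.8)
The plan is to split on whether the pairing $q(u_1,u_2)$ vanishes, and in the non-degenerate case to reduce to a far more transparent configuration living inside the orthogonal complement of the hyperbolic plane $\langle u_1,u_2\rangle$.

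First I would dispose of the case $q(u_1,u_2)=0$, showing it forces possibility (1). If $u_2\in W_1$ then $u_1\wdg u_2=-u_2\wdg u_1\in u_1\wdg W_1=\frr(L_1,u_1)\subseteq\frh_\bbC$ and we are done, so assume $u_2\notin W_1$. For $w\in W_1\cap W_2$ and $w'\in W_2$ the bracket $[u_1\wdg w,u_2\wdg w']$ lies in $\frh_\bbC$; expanding it and using $q(w,u_2)=0$ (since $w\in W_2\subseteq u_2^\perp$), $q(u_1,u_2)=0$, and $w\wdg u_2=-u_2\wdg w\in\frr(L_2,u_2)\subseteq\frh_\bbC$, one is left with $q(w,w')\,u_1\wdg u_2\in\frh_\bbC$. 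If some such $q(w,w')$ is non-zero we are in case (1); otherwise $W_1\cap W_2\perp W_2$, hence $W_1\cap W_2\subseteq W_2^\perp\cap W_2=\bbC u_2$ (the last equality because $q|_{L_2}$ is non-degenerate), and then $u_2\notin W_1$ gives $W_1\cap W_2=0$, contradicting $\dim(W_1\cap W_2)\ge 2(d-3)-d=d-6\ge 1$. This is the only place the hypothesis $d\ge 7$ enters.

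So assume $q(u_1,u_2)\neq 0$; normalize $q(u_1,u_2)=1$ and set $P=\langle u_1,u_2\rangle$, a hyperbolic plane, so $V_\bbC=P\perp P^\perp$ with $q|_{P^\perp}$ non-degenerate of dimension $d-2\ge 5$. Since $u_i$ is isotropic and $u_i\perp L_i$, the plane $L_i$ lies in $u_i^\perp=\bbC u_i\oplus P^\perp$ and projects isomorphically to a non-degenerate plane $\bar L_i\subseteq P^\perp$; a short linear-algebra computation identifies $W_i=(L_i+\bbC u_i)^\perp$ with $\bbC u_i\oplus M_i$, where $M_i:=\bar L_i^\perp$ inside $P^\perp$ is non-degenerate of dimension $d-4\ge 3$. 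Hence $\frr(L_i,u_i)=u_i\wdg W_i=u_i\wdg M_i$, and from now on $\frh_\bbC$ contains $u_1\wdg M_1$ and $u_2\wdg M_2$ for two non-degenerate subspaces $M_1,M_2\subseteq P^\perp$. For $m_1\in M_1$ and $m_2,m_2'\in M_2$ (all orthogonal to $P$) the bracket $[u_1\wdg m_1,u_2\wdg m_2]=-m_1\wdg m_2-q(m_1,m_2)\,u_1\wdg u_2$ gives $m_1\wdg m_2+q(m_1,m_2)\,u_1\wdg u_2\in\frh_\bbC$; call this $(\star)$. Now I would compute $[m_1\wdg m_2,u_2\wdg m_2']$ two ways: directly it equals $-q(m_2,m_2')\,m_1\wdg u_2$ modulo $\frh_\bbC$ (using $m_2\wdg u_2=-u_2\wdg m_2\in\frh_\bbC$), while substituting $(\star)$ and the identity $[u_1\wdg u_2,u_2\wdg m_2']=-u_2\wdg m_2'\in\frh_\bbC$ shows that this bracket already lies in $\frh_\bbC$. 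Comparing, $q(m_2,m_2')\,u_2\wdg m_1\in\frh_\bbC$ for all such vectors, and since $q|_{M_2}$ is non-degenerate we may choose $m_2,m_2'$ with $q(m_2,m_2')\neq 0$, so $u_2\wdg M_1\subseteq\frh_\bbC$. If $M_1\not\subseteq M_2$, any $m_1\in M_1\setminus M_2$ gives $v:=m_1\in u_2^\perp\setminus W_2$ (because $W_2\cap P^\perp=M_2$) with $u_2\wdg v\in\frh_\bbC$, which is possibility (2). If $M_1\subseteq M_2$, then $M_1=M_2=:M$, and for $m\in M$ with $q(m)\neq 0$ one has $m\wdg(M\cap m^\perp)\subseteq\frh_\bbC$ by $(\star)$; a dimension count ($\dim(M\cap m^\perp)=\dim M-1=\dim(m\wdg M)$, the map $x\mapsto m\wdg x$ being injective on $M\cap m^\perp$) shows this equals all of $m\wdg M$, so picking $m'$ with $q(m,m')\neq 0$ and feeding $m\wdg m'\in\frh_\bbC$ back into $(\star)$ yields $u_1\wdg u_2\in\frh_\bbC$, possibility (1).

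The main obstacle I expect is the two-way evaluation of $[m_1\wdg m_2,u_2\wdg m_2']$: one must notice that $(\star)$ allows replacing $m_1\wdg m_2$ by a scalar multiple of $u_1\wdg u_2$ modulo $\frh_\bbC$, and that the adjoint action of $u_1\wdg u_2$ preserves $u_2\wdg M_2$, in order to see that the bracket is forced into $\frh_\bbC$; everything else (the reduction to the model in $P^\perp$, the dimension counts) is routine. The split into the sub-cases $M_1\neq M_2$ versus $M_1=M_2$ is exactly the dichotomy (1)/(2) of the lemma.
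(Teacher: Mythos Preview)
Your proof is correct and takes a genuinely different route from the paper's.

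The paper organizes the argument by the codimension of $W_1\cap W_2$: when $\mathrm{codim}(W_1\cap W_2)\le 5$ it produces a non-isotropic vector in $W_1\cap W_2$ and invokes Lemma~\ref{lem_bracket}; when $\mathrm{codim}(W_1\cap W_2)=6$ it splits on $q(u_1,u_2)$ and, in each sub-case, chases specific auxiliary vectors $x,y$ through two or three brackets to land in possibility~(2). Your organization is orthogonal: you split first on $q(u_1,u_2)$. In the isotropic case you give a short uniform argument showing that possibility~(1) \emph{always} holds (a slightly sharper conclusion than the paper's Case~2b, which only reaches possibility~(2)). In the non-isotropic case you exploit the hyperbolic-plane decomposition $V_\bbC=P\perp P^\perp$ to rewrite $W_i=\bbC u_i\oplus M_i$ with $M_i\subset P^\perp$ non-degenerate, and then a clean two-step bracket calculation yields the much stronger fact $u_2\wdg M_1\subset\frh_\bbC$, from which the dichotomy $M_1\subseteq M_2$ versus $M_1\not\subseteq M_2$ immediately gives (1) or (2). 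This structural reduction is more transparent than the paper's element-chasing, and it makes visible that the bound $d\ge 7$ is used only once (in the $q(u_1,u_2)=0$ branch). One minor simplification: in your final sub-case $M_1=M_2=:M$, you can skip the dimension count entirely --- taking $m_1=m_2=m\in M$ with $q(m)\neq 0$ in $(\star)$ gives $q(m)\,u_1\wdg u_2\in\frh_\bbC$ directly.
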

\begin{proof}
We will have to consider a number of cases. We note that the codimension of
the subspace $W_1\cap W_2\subset V_\bbC$ is at most 6.

\medskip
{\it Case 1: $\mathrm{codim}(W_1\cap W_2) \le 5$.}

Since $L_1$ is two-dimensional, it contains an isotropic vector $\tilde{u}_1$.
Therefore there exists a two-dimensional isotropic subspace $H_1 = \langle\tilde{u}_1,u_1\rangle\subset (L_1+\bbC u_1)$.
Clearly $H_1\perp W_1$.
First consider the case when the intersection $H_1\cap W_1\cap W_2$ is non-zero,
and denote by $x$ some non-zero vector from this intersection.
Then since $W_1 = (L_1+ \bbC u_1)^\perp$, we have $x\in (L_1+ \bbC u_1)\cap (L_1+ \bbC u_1)^\perp = \bbC u_1$.
It follows that $u_1\in W_2$ and since $u_2\wdg W_2 = \frr(L_2,u_2)$,
we have $u_2\wdg u_1 \in \frr(L_2,u_2)\subset \frh_\bbC$.

So it remains to consider the case when $H_1\cap W_1\cap W_2 = 0$.
We claim that in this case $W_1\cap W_2$ contains a non-isotropic vector.
If not, then $(W_1\cap W_2)\oplus H_1$ is an isotropic subspace in $V_\bbC$.
But this subspace has codimension at most 3, which is strictly less than $d/2$,
hence the subspace cannot be isotropic. Let $v\in W_1\cap W_2$ be a non-isotropic vector.
We apply Lemma \ref{lem_bracket} to conclude that $u_1\wdg u_2\in \frh_\bbC$.

\medskip
{\it Case 2: $\mathrm{codim}(W_1\cap W_2) = 6$.}
We will consider separately two subcases.

\medskip
{\it Case 2a: $q(u_1, u_2)\neq 0$.}

If $u_1\in W_2$ then $u_2\wdg u_1\in u_2\wdg W_2 = \frr(L_2,u_2)\subset \frh_\bbC$. So we may assume
$u_1\notin W_2$, hence $u_1\notin W_2\cap u_1^\perp$.
We claim that the subspace $W_2\cap u_1^\perp$ cannot be isotropic.
If it were, then $(W_2\cap u_1^\perp)+\bbC u_1$ would also be isotropic.
But the latter subspace has codimension at most 3, contradicting the assumption $d\ge 7$.
We conclude that it is possible to find $x\in W_2\cap u_1^\perp$ with $q(x)\neq 0$.

As the next step, we find $y\in (W_1\cap u_2^\perp \cap x^\perp) \setminus W_2$.
To show that it is possible, we need to check that $W_1\cap u_2^\perp \cap x^\perp\nsubseteq W_2$.
Indeed, otherwise we would have
\begin{displaymath}
\mathrm{codim}_{V_\bbC}(W_1\cap W_2) \le \mathrm{codim}_{V_\bbC}(W_1) + \mathrm{codim}_{W_1}(W_1\cap u_2^\perp \cap x^\perp)\le 5,
\end{displaymath}
contradicting our standing assumption in the case we are considering.

We have shown that $\frh_\bbC$ contains the elements $u_1\wdg y$ and $u_2\wdg x$.
Observe that the Lie bracket $[u_1\wdg y, u_2\wdg x]$ is a non-zero multiple of $x\wdg y$.
Next, consider the Lie bracket $[u_2\wdg x,x\wdg y]$. It is a non-zero multiple
of $u_2\wdg y$. Setting $v = y$, we conclude the proof in this case, because
$v$ satisfies the conditions from case (2) in the statement of the lemma.

\medskip
{\it Case 2b: $q(u_1,u_2) = 0$.}

If $u_1\in W_2$, then $u_1\wdg u_2 \in \frr(L_2, u_2)$ and we conclude that $u_1\wdg u_2\in \frh_\bbC$.
So we may assume that $u_1\notin W_2$ and hence $W_2$ is a hyperplane in $W_2+ \bbC u_1$.

We note that $W_2\subset u_1^\perp$ implies that $W_1\cap W_2 = W_2\cap L_1^\perp$, hence
$\mathrm{codim}(W_1\cap W_2)\le 5$, contradicting our assumptions. It follows that
we can find $x\in W_2\setminus (W_2\cap u_1^\perp)$.

Next observe that it is possible to find $y\in (W_1\cap u_2^\perp)\setminus (W_2+ \bbC u_1)$.
Indeed, $W_2$ is a hyperplane in $W_2+ \bbC u_1$. If $W_1\cap u_2^\perp\subset W_2+ \bbC u_1$,
then we intersect $W_2$ and $W_1\cap u_2^\perp$ inside $W_2+ \bbC u_1$, and we see that $W_1 \cap W_2\cap u_2^\perp$ is of
codimension at most one in $W_1\cap u_2^\perp$. Since $W_1$ has codimension 3 in $V_\bbC$,
the codimension of $W_1\cap u_2^\perp$ in $V_\bbC$ is at most 4, hence $\mathrm{codim}_{V_\bbC}(W_1\cap W_2)\le 5$,
contradicting our assumptions.

By construction, $x\in W_2$ and $y\in W_1$, so we obtain two elements $u_1\wdg y$ and $u_2\wdg x$ in $\frh_\bbC$.
Finally, we compute the Lie bracket $[u_1\wdg y, u_2\wdg x] = q(u_1,x)y\wdg u_2 -q(y,x)u_1\wdg u_2
= u_2\wdg(q(y,x)u_1 - q(u_1,x)y)$. Let $v = q(y,x)u_1 - q(u_1,x)y$.
By construction, $q(u_1,x)\neq 0$. Since $y\notin W_2+ \bbC u_1$ we see that $v\notin W_2$.
We see that $v$ satisfies the conditions of case (2) in the statement of
the lemma, and this completes the proof.
\end{proof}

\subsubsection{Proof of Proposition \ref{prop_algebra}}\label{prop_proof}
We are now ready to proceed with the proof of the main proposition.

\begin{proof}[Proof of Proposition \ref{prop_algebra}]

It is enough to prove that $\frh_\bbC = \frso(V_\bbC)$. For $\xi\in \Aut(\bbC/\bbQ)$
consider the pair $(L^\xi, u^\xi)$, where the superscript $\xi$ denotes
the action of $\xi$ on elements and subspaces of $V_\bbC$. Since $\frh_\bbC$
is defined over $\bbQ$, we have $\frr(L^\xi,u^\xi)\subset \frh_\bbC$.

We claim that the elements of the form $u^\xi$ for $\xi\in \Aut(\bbC/\bbQ)$
span $V_\bbC$. Assuming the contrary, denote by $W\subsetneq V_\bbC$ their span.
The subspace $W$ is stable under the action of $\Aut(\bbC/\bbQ)$, hence it is defined
over $\bbQ$. It follows that $W^\perp$ contains non-zero rational vectors, but
this contradicts our assumptions, because $u\in W$.

Now we can choose a basis of $V_\bbC$ consisting of elements of the form
$u^\xi$ for $\xi\in \Aut(\bbC/\bbQ)$. Let $u_1,\ldots u_d$ be such a basis
and $L_1,\ldots L_d$ the corresponding subspaces.

In the case $\dim(L) = 0$ and $d\ge 5$ we conclude the proof by applying Lemma \ref{lem_dim0}:
it follows from that lemma that $\frh_\bbC$ contains all elements of the form
$u_i\wdg u_j$. But such elements generate $\Lambda^2 V_\bbC$, so $\frh_\bbC = \frso(V_\bbC)$.
Analogously, in the case $\dim(L) = 1$ and $d\ge 6$ we conclude by applying Lemma \ref{lem_dim1_strong}.

Finally, in the case $\dim(L) = 2$ and $d\ge 7$ we apply Lemma \ref{lem_dim2} and observe that there
are two possibilities. Either for all $i,j=1,\ldots d$ we have $u_i\wdg u_j\in \frh_\bbC$,
and then we conclude as above. Or there is a pair of indices, say $i=1$, $j=2$, such that
the second half of the conclusion of Lemma \ref{lem_dim2} holds. In this case 
we have the following: $u_2\wdg (W_2+\bbC v)\subset \frh_\bbC$, where $v\in u_2^\perp$,
$v\notin W_2$, and $W_2 = (L_2+ \bbC u_2)^\perp$. Let us consider the
subspaces $W_2' = W_2 + \bbC v$ and $L_2' = L_2 \cap (W_2')^\perp = L_2\cap v^\perp$. Then $\dim(L_2') = 1$,
$W_2' = (L_2' + \bbC u_2)^\perp$ and $\frh_\bbC$ contains the subalgebra $\frr(L_2', u_2)$.
We have thus reduced to the case $\dim(L) = 1$. Note that $L_2'$ could be isotropic,
but since we assume that $d\ge 7$, we may conclude by applying Lemma \ref{lem_dim1}.
\end{proof}

\subsection{Orbits in the parabolic period domain}\label{sec_orbitsDomP}

In this subsection we will use Theorem \ref{thm_Ratner} to study $\Gamma$-orbits
of certain points in the parabolic period domain $\DomP$.
We will rely on Proposition \ref{prop_algebra} proven above.
We again assume that $V = H^2(X,\bbQ)$
for a hyperk\"ahler manifold $X$ and $q\in S^2V^*$ is the BBF form. We have $d =\dim(V) = b_2(X)$.
The group $\Gamma\subset \rmO(H^2(X,\bbZ),q)\cap \SO^\circ(V_\bbR,q)$ is an arithmetic lattice. 

\begin{cor}\label{cor_period1}
Let $(L,u)\in \DomP$. Assume that $d\ge 7$ and $u^\perp$ does not contain non-zero rational vectors.
Then the $\Gamma$-orbit of $(L,u)$ is dense in $\DomP$.
\end{cor}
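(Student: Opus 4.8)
The plan is to reduce this to Ratner's theorem in the form of Corollary~\ref{cor_Ratner}. The obstruction to applying it directly on $\DomP\simeq\HH_2$ is that the point stabilizer $\SO(L)\times\SO^\circ(L^\perp)^u$ is not generated by unipotents; this is exactly why the enlarged homogeneous space $\widetilde{\HH}_2 = \SO^\circ(V_\bbR,q)/R(L,u)$ of (\ref{eqn_Hbig}) was introduced. So I would work on $\widetilde{\HH}_2$ and then push the density statement down along the proper, continuous, $\SO^\circ(V_\bbR,q)$-equivariant surjection $\pi\colon\widetilde{\HH}_2\to\HH_2\simeq\DomP$, which sends the base coset $eR(L,u)$ to $(L,u)$.

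First I would record the setup for Ratner's theorem: take $\mathbf{G}=\SO(V,q)$, which for $d=b_2(X)\ge 7$ is a connected, semisimple (indeed $\bbQ$-simple) algebraic group over $\bbQ$ with $\mathbf{G}(\bbR)^\circ = \SO^\circ(V_\bbR,q) =: G$; take $S=R(L,u)$, the abelian unipotent subgroup with Lie algebra $\frr(L,u)=u\wdg(L+\bbR u)^\perp$, which is generated by one-parameter unipotent subgroups; and take $\Gamma$ the given arithmetic lattice in $G$. The point of $\widetilde{\HH}_2=G/S$ whose orbit we study is $p=eR(L,u)$, so $gSg^{-1}=S$ with $g=e$, and the single hypothesis of Corollary~\ref{cor_Ratner} to be checked is that the smallest algebraic $\bbQ$-subgroup $\mathbf{H}\subseteq\mathbf{G}$ with $\mathbf{H}(\bbR)^\circ\supseteq R(L,u)$ is all of $\mathbf{G}$. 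Since $R(L,u)$ is connected, $\mathbf{H}$ is the connected $\bbQ$-subgroup of $\mathbf{G}$ whose Lie algebra is the smallest $\bbQ$-Lie subalgebra $\frh\subseteq\frso(V,q)$ with $\frh_\bbR\supseteq\frr(L,u)$. This is precisely the quantity governed by Proposition~\ref{prop_algebra}: its case $\ell=2$, $d\ge 7$ applies because $(L,u)$ is a pair with $\dim L=2$, $L$ positive, $0\ne u\in L^\perp$ isotropic, and, crucially, our hypothesis is that $u^\perp$ contains no nonzero rational vector. Hence $\frh=\frso(V,q)$ and $\mathbf{H}=\mathbf{G}$, so Corollary~\ref{cor_Ratner} yields that $\Gamma\cdot eR(L,u)$ is dense in $\widetilde{\HH}_2$.

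Finally, the image of a dense set under a continuous surjection is dense, and $\pi$ is $\Gamma$-equivariant, so $\pi(\Gamma\cdot eR(L,u))=\Gamma\cdot(L,u)$ is dense in $\DomP$, which is the claim. The genuine content of the argument is the Lie-theoretic Proposition~\ref{prop_algebra}, already established in Subsection~\ref{sec_LieDomP}; granting it, this corollary is essentially a formal unwinding of Ratner's theorem, the only points requiring care being the identification of the stabilizer and its unipotent part $R(L,u)$, the verification of the standard hypotheses on $\mathbf{G}$, $S$ and $\Gamma$, and the elementary observation that continuity and surjectivity of $\pi$ lose no density.
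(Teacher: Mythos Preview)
Your proof is correct and follows essentially the same approach as the paper: pass to the enlarged homogeneous space $\widetilde{\HH}_2=G/R(L,u)$, apply Corollary~\ref{cor_Ratner} with $\mathbf{G}=\SO(V,q)$, $g=e$, $S=R(L,u)$, invoke Proposition~\ref{prop_algebra} (case $\ell=2$, $d\ge 7$) to force $\mathbf{H}=\mathbf{G}$, and then push the dense orbit down to $\HH_2\simeq\DomP$. Your write-up is somewhat more explicit about the routine verifications (semisimplicity of $\mathbf{G}$, unipotence of $S$, equivariance and surjectivity of $\pi$), but the argument is the same.
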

\begin{proof}
We note that $\DomP\simeq \HH_2$. We consider the homogeneous
space $\widetilde{\HH}_2$ introduced in equation (\ref{eqn_Hbig})
from section \ref{sec_LieDomP} and
apply Corollary \ref{cor_Ratner} to it. We
set $\mathbf{G} = \SO(V,q)$, $g=1$, $S = R(L,u)$ and $\mathbf{H}\subset \mathbf{G}$
the smallest algebraic $\bbQ$-subgroup such that $S\subset \mathbf{H}(\bbR)^\circ$.
Proposition \ref{prop_algebra} implies that $\mathbf{H} = \mathbf{G}$,
so all the assumptions of Corollary \ref{cor_Ratner} are satisfied
and the $\Gamma$-orbit of $(L,u)$ is dense in $\widetilde{\HH}_2$.
Hence the image of this orbit under the map $\widetilde{\HH}_2\to \HH_2$
is dense in $\HH_2\simeq \DomP$.
\end{proof}

More generally, let us consider a $q$-negative subspace $N\subset V$ and the
corresponding period domain $\DD_{N,p}$, see section \ref{sec_def_domains} for
the definition. Let us denote by $\Gamma_N\subset \Gamma$ the stabilizer of $N$.
The above corollary admits the following generalization,
whose proof is obtained by passing to the orthogonal complement to $N$ in $V$.

\begin{cor}\label{cor_period11}
Let $(L,u)\in \DD_{N,p}$. Assume that $\dim(V) - \dim(N)\ge 7$ and $u^\perp\cap H^2(X,\bbQ) = N$.
Then the $\Gamma_N$-orbit of $(L,u)$ is dense in $\DD_{N,p}$.
\end{cor}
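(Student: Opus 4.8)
The plan is to reduce the statement to Corollary~\ref{cor_period1} by restricting the whole picture to the orthogonal complement $V':=N^\perp\subset V$. Since $q|_N$ is negative definite it is non-degenerate, so $V=N\perp V'$, the restriction $q':=q|_{V'}$ is non-degenerate of signature $(3,d'-3)$ with $d':=\dim(V')=d-\dim(N)\ge 7$, and $V'_\bbZ:=V'\cap H^2(X,\bbZ)$ is a full lattice in $V'$. First I would record that the given point sits inside the $V'$-picture: from $N\subset u^\perp$ we get $u\perp N$, i.e.\ $u\in V'_\bbR$, and together with $L\perp N$ (which holds since $(L,u)\in\DD_{N,p}$) the pair $(L,u)$ lies in the parabolic period domain $\DD'_p$ attached to $(V',q')$ — the locus of $\DD_{N,p}$ cut out by $u\perp N$, which is $\Gamma_N$-invariant and coincides with $\DD_{N,p}$ under the natural identification of $\DD_N$ with the period domain of $V'$. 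Moreover, computing the orthogonal complement of $u$ inside $V'$ rather than $V$, one has $u^\perp\cap V'=(u^\perp\cap H^2(X,\bbQ))\cap V'=N\cap N^\perp=0$, so $u^\perp$ contains no non-zero rational vector of $V'$ either.

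The next step is the group. As $N$ is a rational subspace, its stabilizer in $\SO^\circ(V_\bbR,q)$ is a $\bbQ$-subgroup $G_N$, hence $\Gamma_N=\Gamma\cap G_N$ is an arithmetic lattice in it. Because $V=N\perp V'$, restriction to $V'$ is a homomorphism $G_N\to\SO(V'_\bbR,q')$ whose kernel consists of elements acting trivially on $V'$, hence lying in the compact group $\rmO(N_\bbR)$ (compact since $q|_N$ is negative definite); a lattice modulo a compact normal subgroup is again a lattice, so the image $\Gamma'$ of $\Gamma_N$ is an arithmetic lattice in $\SO(V'_\bbR,q')$ preserving $V'_\bbZ$. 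Replacing $\Gamma'$ by the finite-index subgroup $\Gamma''=\Gamma'\cap\SO^\circ(V'_\bbR,q')$ we get a lattice of exactly the type to which Corollary~\ref{cor_period1} (via Proposition~\ref{prop_algebra} with $\ell=2$ and $d'\ge 7$) applies. Applying that corollary to $(V',q')$, the lattice $\Gamma''$, and the point $(L,u)\in\DD'_p$ — whose hypotheses ($d'\ge 7$ and ``$u^\perp$ has no non-zero rational vector'') were verified above — shows that the $\Gamma''$-orbit of $(L,u)$ is dense in $\DD'_p\simeq\DD_{N,p}$. Since this orbit is contained in the $\Gamma_N$-orbit, the latter is dense in $\DD_{N,p}$ as well.

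The only point that is not pure bookkeeping is the claim that the restricted group $\Gamma'$ is genuinely an arithmetic \emph{lattice} in $\SO(V'_\bbR,q')$, and not merely a discrete subgroup of infinite covolume; this is where one uses that $N$ is rational (so that $\Gamma_N$ is a lattice in the $\bbQ$-group $G_N$, by the standard fact that an arithmetic lattice meets a rational algebraic subgroup in a lattice of that subgroup) together with the compactness of the kernel of restriction to $V'$. Everything else — the orthogonal splitting and the signature/dimension count, the identification $\DD_{N,p}\simeq\DD'_p$, the translation of the irrationality hypothesis from $V$ to $V'$, and keeping track of $\SO$ versus $\SO^\circ$ and of the integral structures — is routine.
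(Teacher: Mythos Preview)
Your approach—restricting to $V'=N^\perp$ and invoking Corollary~\ref{cor_period1} there—is exactly the paper's (the paper's proof is the single clause ``obtained by passing to the orthogonal complement to $N$ in $V$''). The details you supply, in particular the arithmeticity of the image of $\Gamma_N$ in $\SO(V'_\bbR,q')$ via the compact-kernel argument and the verification $u^\perp\cap V'=N\cap N^\perp=0$, are correct.

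One point to flag: by the paper's definition, $\DD_{N,p}$ is the full preimage of $\DD_N$ under $\DD_p\to\DD$, which forces $L\perp N$ but \emph{not} $u\perp N$; your $\DD'_p$ is only the sublocus $\{(L,u)\in\DD_{N,p}:u\perp N\}$, and for $\dim N\ge 1$ this is a proper closed $\Gamma_N$-invariant subset (the isotropic cone of $(L+N)^\perp$ sits strictly inside that of $L^\perp$). So your claim that $\DD'_p$ ``coincides with $\DD_{N,p}$'' is not literally correct under the paper's definitions, and what your argument—and equally the paper's one-line proof—actually establishes is density in $\DD'_p$. This is best read as a slight imprecision in the corollary's statement rather than a flaw in your method; it is harmless for the intended application (Theorem~\ref{thm_main2} via Theorem~\ref{thm_density2}), where $u\in N^\perp$ is assumed from the outset.
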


We will also consider non-generic orbits of one particular type that we now describe.
Fix $v\in V$ with $q(v)>0$ and consider the subvariety
$$
\DomP^v = \{(L,u)\in \DomP\st v\in L\}\subset \DomP.
$$
Note that $\DomP^v\simeq \SO^\circ(2,d-2)/\SO^\circ(1,d-2)^u\simeq \HH_1$.

\begin{cor}\label{cor_period2}
Let $(L,u)\in \DomP^v$. Assume that $d\ge 7$ and $\langle u,v\rangle^\perp$ does not contain non-zero rational vectors.
Then the closure of the $\Gamma$-orbit of $(L,u)$ contains $\DomP^v$.
\end{cor}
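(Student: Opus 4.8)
The first thing to notice is that for $(L,u)\in\DomP^v$ one automatically has $q(u,v)=0$, so $v\in u^\perp$; hence Corollary \ref{cor_period1} does not apply (and indeed the $\Gamma$-orbit of $(L,u)$ is not dense in $\DomP$). The plan is instead to rerun the argument of Corollary \ref{cor_period1} one level down, inside the rational hyperplane $v^\perp$, using the stabilizer of the rational vector $v$ in $\Gamma$.

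First I would record the linear-algebra dictionary. Since $v$ is rational and $q(v)>0$, the subspace $v^\perp\subset V$ is defined over $\bbQ$, has dimension $d-1$, and $q|_{v^\perp}$ is non-degenerate of signature $(2,d-3)$. For $(L,u)\in\DomP^v$ write $L=\langle v\rangle\oplus L'$ with $L'=L\cap v^\perp$ a positive line; the orientation of $L$ together with the fixed vector $v$ distinguishes a generator of $L'$, and $u\in L^\perp=(L')^\perp\cap v^\perp$ is isotropic. The assignment $(L,u)\mapsto(L',u)$ is precisely the identification $\DomP^v\simeq\HH_1$ stated just before the corollary, where $\HH_1$ is now the homogeneous space of Section \ref{sec_LieDomP} attached to the quadratic space $(v^\perp,q|_{v^\perp})$. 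Furthermore the subgroup of $\SO^\circ(V_\bbR,q)$ fixing $v$ is $\SO^\circ(v^\perp_\bbR,q)$, so, after rescaling $v$ to be primitive in $H^2(X,\bbZ)$, the group $\Gamma_v:=\Gamma\cap\SO^\circ(v^\perp_\bbR,q)$ is an arithmetic lattice in the connected semisimple $\bbQ$-group $\mathbf{G}_v:=\SO(v^\perp,q)$ (semisimplicity is automatic since $\dim v^\perp=d-1\ge 6$).

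With this in place the argument copies the proof of Corollary \ref{cor_period1}. The stabilizer of the point $(L',u)$ in $\SO^\circ(v^\perp_\bbR,q)$ is not generated by unipotents, so I pass to the larger homogeneous space $\widetilde{\HH}_1=\SO^\circ(v^\perp_\bbR,q)/R(L',u)$ as in equation (\ref{eqn_Hbig}), which admits a proper map onto $\HH_1$, and apply Corollary \ref{cor_Ratner} with $\mathbf{G}=\mathbf{G}_v$, $g=1$, $S=R(L',u)$ and the lattice $\Gamma_v$. The hypothesis to be checked is that the smallest algebraic $\bbQ$-subgroup of $\mathbf{G}_v$ containing $R(L',u)$ is $\mathbf{G}_v$ itself, equivalently that the smallest $\bbQ$-Lie subalgebra $\frh\subset\frso(v^\perp,q)$ with $\frh_\bbR\supseteq\frr(L',u)$ equals $\frso(v^\perp,q)$. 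This is exactly Proposition \ref{prop_algebra}(2), applied in the quadratic space $v^\perp$ of dimension $d-1\ge 6$ with $\ell=1$: its assumption that $u^\perp$ contains no non-zero rational vector becomes the assumption that $u^\perp\cap v^\perp=\langle u,v\rangle^\perp$ contains no non-zero rational vector, which is precisely the hypothesis of the corollary. Hence the $\Gamma_v$-orbit of $(L,u)$ is dense in $\widetilde{\HH}_1$, and, pushing forward along the proper map, dense in $\HH_1\simeq\DomP^v$. Finally, for $\gamma\in\Gamma_v\subset\Gamma$ the point $\gamma(L,u)=(\gamma L,\gamma u)$ still satisfies $v=\gamma v\in\gamma L$, so it lies in $\DomP^v$ and corresponds to $\gamma(L',u)$ under the identification; therefore $\overline{\Gamma\cdot(L,u)}\supseteq\overline{\Gamma_v\cdot(L,u)}=\DomP^v$.

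The only genuinely non-formal ingredient, Proposition \ref{prop_algebra}(2), is already proved, so the remaining work is the bookkeeping of the second paragraph: the identification $\DomP^v\simeq\HH_1$ over $v^\perp$ compatibly with orientations and base points, the arithmeticity of $\Gamma_v$ as a lattice in $\SO^\circ(v^\perp_\bbR,q)$, and the correct transport of the rationality hypothesis to $v^\perp$. I expect the fiddliest point to be the orientation/connectedness matching in $\DomP^v\simeq\HH_1$ — it is the \emph{vector} $v$, not merely its ray, that rigidifies the orientation of $L$; everything else is a routine transcription of the proof of Corollary \ref{cor_period1}.
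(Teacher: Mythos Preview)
Your proposal is correct and follows essentially the same route as the paper: identify $\DomP^v$ with $\HH_1$ for the quadratic space $v^\perp$, pass to $\widetilde{\HH}_1$, and use Proposition~\ref{prop_algebra}(2) (with $\dim v^\perp=d-1\ge 6$, $\ell=1$, and the hypothesis translated to $u^\perp\cap v^\perp=\langle u,v\rangle^\perp$ containing no rational vector) to verify Ratner's criterion. The only cosmetic difference is that the paper applies Theorem~\ref{thm_Ratner} in the ambient $\widetilde{\HH}_2$ with the full lattice $\Gamma$ and $\mathbf{G}=\SO(V,q)$, obtaining $\mathbf{H}=\SO(v^\perp,q)$ and hence $\overline{\Gamma\cdot(L,u)}\supseteq\widetilde{\HH}_1$, whereas you restrict first to the arithmetic sublattice $\Gamma_v$ and apply Corollary~\ref{cor_Ratner} directly in $\widetilde{\HH}_1$; since $R(L,u)=R(L',u)$ (as $(L+\bbR u)^\perp=(L'+\bbR u)^\perp\cap v^\perp$), the two computations are equivalent.
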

\begin{proof}
We consider the orthogonal complement $V'=v^\perp \subset V$ and let $L'=L\cap V'$.
Fixing the base point $(L,u)\in \DomP^v$ we see that $\DomP^v$ is isomorphic to the
homogeneous space $\HH_1$ for the subgroup $\SO(V',q)\subset \SO(V,q)$.
We have $\DomP^v\simeq \HH_1 \subset \DomP\simeq \HH_2$ and consider
the corresponding inclusion $\widetilde{\HH}_1\subset \widetilde{\HH}_2$,
see equation (\ref{eqn_Hbig}) from section \ref{sec_LieDomP}.

Let $\mathbf{G} = \SO(V,q)$, $g=1$, $S = R(L,u)$ and $\mathbf{H}\subset \mathbf{G}$
the smallest algebraic $\bbQ$-subgroup such that $S\subset \mathbf{H}(\bbR)^\circ$.
Since $\SO(V',q)$ is a $\bbQ$-subgroup containing $S$, it is clear that $\mathbf{H}\subset \SO(V',q)$.
We apply Proposition \ref{prop_algebra} to $V'$ and $(L',u)$. The assumptions
are satisfied because $d' = \dim(V')\ge 6$, $\dim(L')=1$ and $u^\perp\cap V'$
does not contain non-zero rational vectors. The proposition implies
that $\mathbf{H} = \SO(V',q)$, and by Theorem \ref{thm_Ratner} the closure of the
image of $\Gamma$ in $\widetilde{\HH}_2$ contains $\widetilde{\HH}_1$.
This implies that the closure of the $\Gamma$-orbit of $(L,u)$ in $\DomP$
contains the image of $\widetilde{\HH}_1$ under the map $\widetilde{\HH}_2\to \HH_2\simeq \DomP$
The latter image is $\HH_1\simeq \DomP^v$.
\end{proof}

\subsection{Orbits in the parabolic Teichm\"uller space}
\label{_orbits_parabolic_Subsection_}

Now we will use the results of the previous section to show that certain orbits
of the mapping class group are dense in the parabolic Teichm\"uller space.
Recall that we have the period map $\PerP^\circ\colon \TeichP^\circ\to\DomP$.
We first need to relate the closures of some orbits in $\TeichP^\circ$
and the closures of their images in the period domain $\DomP$.
We recall that $V = H^2(X,\bbQ)$ and $q\in S^2V^*$ is the BBF form.

\begin{lem}\label{lem_chamber}
Let $(I,u)\in \TeichP^\circ$ and $(L,u) = \PerP(I,u)$. Assume that
the biggest rational subspace contained in $u^\perp\subset V_\bbR$ has dimension either zero or one,
and in the latter case it is spanned by a $q$-positive vector.
Then $u$ lies in the closure of exactly one chamber of the positive cone,
namely in the closure of the K\"ahler cone of $X_I$.
\end{lem}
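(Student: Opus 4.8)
The plan is to view $u$ as a point on the isotropic boundary of the positive cone $\CC^+\subset V^{1,1}_{I,\bbR}=L^\perp$ — indeed $u\in L^\perp$, $q(u)=0$ and $u\neq 0$ — to show that $u$ lies on none of the walls of the chamber decomposition of $\CC^+$, and to deduce uniqueness from a convexity argument; existence being immediate. For existence: since $(I,u)\in\TeichP^\circ$, the class $u$ is parabolic for $I$, hence in particular nef, so $u\in\overline{\KK}_{X_I}$. Recalling from section~\ref{sec_Teich} that $\KK_{X_I}$ is one of the chambers into which the hyperplanes $H_x=x^\perp\cap V^{1,1}_{I,\bbR}$, $x\in\MBM^{1,1}:=\MBM^\circ\cap V^{1,1}_{I,\bbR}$, cut the positive cone, this already puts $u$ in the closure of at least one chamber, namely $\overline{\KK}_{X_I}$. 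The whole content is therefore to show that $u$ lies in the closure of \emph{at most} one chamber.

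The key observation I would isolate first is that $q(u,x)\neq 0$ for every $x\in\MBM^{1,1}$. Such an $x$ is a nonzero primitive integral class with $q(x)<0$, so in particular a nonzero rational vector; were $q(u,x)=0$, the line $\bbQ x$ would be a rational subspace of $u^\perp\subset V_\bbR$, hence contained in the largest rational subspace $N_0\subset u^\perp$. By hypothesis $\dim N_0\le 1$, and if $\dim N_0=1$ then $N_0$ is spanned by a $q$-positive vector; either alternative makes $\bbQ x\subset N_0$ impossible (by dimension, respectively because $q(x)<0$). Thus $u$ lies on no wall $H_x$.

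Granting this, suppose $u\in\overline{C_1}\cap\overline{C_2}$ for two chambers $C_1,C_2$. On any chamber $C$ the function $q(\,\cdot\,,x)$ is continuous and nowhere zero (chambers avoid all $H_x$), hence of constant sign $\varepsilon_x(C)\in\{\pm1\}$ by connectedness of $C$. Taking sequences in $C_j$ converging to $u$ and using $q(u,x)\neq 0$, one gets $\varepsilon_x(C_j)=\operatorname{sgn}q(u,x)$ for $j=1,2$ and every $x$, so $C_1$ and $C_2$ carry the same sign vector. Now consider $R=\{\,y\in\CC^+ : \operatorname{sgn}q(u,x)\cdot q(y,x)>0\ \text{for all }x\in\MBM^{1,1}\,\}$: it is convex, being the intersection of the convex cone $\CC^+$ with open half-spaces, hence connected, and it misses every $H_x$, so it is a connected subset of $\CC^+\setminus\bigcup_x H_x$ and is therefore contained in a single chamber $C$. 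Since $\varepsilon_x(C_j)=\operatorname{sgn}q(u,x)$ we have $C_1\subset R\subset C$ and $C_2\subset R\subset C$; as $C_1,C_2,C$ are all connected components of $\CC^+\setminus\bigcup_x H_x$ and $C_1,C_2\neq\emptyset$, this forces $C_1=C=C_2$. Combined with the first paragraph, $u$ lies in the closure of exactly one chamber, namely $\overline{\KK}_{X_I}$.

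I expect no genuinely hard step here; the one point requiring care is that $u$ sits on the light cone $q=0$, where infinitely many walls $H_x$ could a priori accumulate, so one cannot simply discard all but finitely many walls near $u$. The sign-vector/convexity argument above is arranged precisely to avoid any local finiteness at the boundary point: it uses only that the $H_x$ are hyperplanes through the origin, that $q(u,x)\neq 0$, and that $\CC^+$ is convex.
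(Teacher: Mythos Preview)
Your proof is correct and takes a genuinely different route from the paper's. Both arguments begin by noting that $q(u,x)\neq 0$ for every $x\in\MBM^{1,1}$, since such $x$ are rational and $q$-negative. From there the paper invokes a local finiteness statement at the boundary (Corollary~\ref{cor_loc_fin}, derived from Proposition~\ref{prop_loc_fin}): it first shows $u\notin H_0$, where $H_0$ is the real span of the rational $(1,1)$-classes, then uses boundedness of the BBF squares of MBM classes to find a neighbourhood $U$ of $u$ meeting only finitely many walls, shrinks $U$ off those walls, and concludes that $U\cap\CC^+$ lies in a single chamber. Your argument bypasses local finiteness entirely: the sign-vector region $R$ is convex because it is an intersection of open half-spaces with the convex cone $\CC^+$, hence lies in a single chamber, and any chamber whose closure contains $u$ inherits the sign vector of $u$ and so sits inside $R$. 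This is more elementary---it would in fact make Corollary~\ref{cor_loc_fin} unnecessary for the paper---and it does not use the boundedness of MBM squares. The paper's approach, on the other hand, yields slightly more geometric information: a genuine neighbourhood of $u$ in $\overline{\CC^+}$ that meets only the K\"ahler chamber.
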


\begin{proof}
Let $H = V^{1,1}_{I,\bbR} = L^\perp\subset V_\bbR$ and $H_0 = V^{1,1}_{I,\bbQ} \otimes_\bbQ \bbR$, where $V^{1,1}_{I,\bbQ} = V^{1,1}_{I,\bbR} \cap V$
is the $\bbQ$-subspace in $V^{1,1}_{I,\bbR}$ spanned by the N\'eron-Severi group of $X_I$.

It follows from our assumptions that $H_0\neq H$: otherwise $H = L^\perp$ would be a subspace
of $V_\bbR$ defined over $\bbQ$, and so would be $L$, hence $u^\perp$ would contain a two-dimensional
rational subspace. Also observe that $u\notin H_0$: otherwise $u^\perp$ would contain
a rational subspace $H_0^\perp$ of dimension at least three. 

Recall that we have a set of MBM classes $\MBM_I^{1,1}$ that define a collection
of hyperplanes $x^\perp\subset H$, $x\in \MBM_I^{1,1}$. Since the MBM classes are integral,
we have $\MBM_I^{1,1}\subset H_0$. It is shown below in Corollary \ref{cor_loc_fin} that
the collection of hyperplanes defined by the MBM classes is locally finite around $u$,
i.e.\ there exists an open subset $U\subset H$, $u\in U$ such that $U$ intersects
only a finite number of hyperplanes $x^\perp$.

The hyperplanes $x^\perp$, $x\in \MBM_I^{1,1}$ cut the positive
cone $\CC^+$ into open chambers, and it follows that $U$ meets only a finite number
of those chambers. Also note that $u$ does not lie in any of the hyperplanes $x^\perp$ where $x\in \MBM_I^{1,1}$,
because the MBM classes are rational and negative.
It follows that $u$ lies in the closure of exactly
one chamber that has to be the K\"ahler cone of $X_I$, because $u$ is by definition a nef class.
\end{proof}

\begin{prop}\label{prop_density}
Assume that $(I,u)\in \TeichP^\circ$ and let $(L,u) = \PerP(I,u)$. Denote
by $T\subset \TeichP^\circ$ the closure of the $\MC^\circ$-orbit of $(I,u)$
and by $D\subset \DomP$ the closure of the $\Gamma$-orbit of $(L,u)$.
Assume that one of the following holds:
\begin{enumerate}
\item either $u^\perp$ does not contain non-zero rational vectors,
\item or there exists a non-zero rational vector $v\in L$ such that $\langle u,v\rangle^\perp$
does not contain non-zero rational vectors. 
\end{enumerate}
Then $T = (\PerP^{\circ})^{-1}(D)$. 
\end{prop}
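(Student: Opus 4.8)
The containment $T \subseteq (\PerP^\circ)^{-1}(D)$ is immediate: since $\PerP^\circ$ is $\MC^\circ$-equivariant and continuous (the monodromy $\Gamma$ is the image of $\MC^\circ$ in $\rmO(V,q)$), the image of the orbit $\MC^\circ\cdot(I,u)$ lands in $\Gamma\cdot(L,u)\subseteq D$, and taking closures and preimages gives $T \subseteq (\PerP^\circ)^{-1}(D)$. The substance is the reverse inclusion: every point $(I',u')\in\TeichP^\circ$ with $\PerP^\circ(I',u') = (L',u')\in D$ must lie in $T$.

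The first step is to identify $D$ explicitly using the Ratner-theoretic results of the previous subsection. In case (1), Corollary \ref{cor_period1} gives $D = \DomP$. In case (2), Corollary \ref{cor_period2} gives $D \supseteq \DomP^v$; one should also check that $D$ is contained in $\DomP^v$ (the condition $v\in L$ is closed and $\Gamma_v$-invariant, and since $v$ is rational the whole $\Gamma$-orbit of $(L,u)$ satisfies $v\in L^{\gamma}$... actually one must be slightly careful, since $\Gamma$ need not fix $v$ — but the orbit closure of $(L,u)$ under $\Gamma$ is what $D$ denotes, and $\DomP^v$ is not $\Gamma$-invariant; so here $D$ is exactly the closure of $\Gamma\cdot(L,u)$ and the relevant fact is only $D\supseteq\DomP^v$, while points of $D$ outside $\DomP^v$ are handled by the orbit being dense in translates). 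In practice the cleanest route is: in case (1), $D=\DomP$; in case (2), first reduce to showing $(\PerP^\circ)^{-1}(\DomP^v)\subseteq T$ and note $D$ is the $\Gamma$-saturation, handled by translating by $\Gamma$. I would structure the argument so that the key geometric input is always: \emph{the fibre of $\PerP^\circ$ over a point $(L',u')$ of $D$ is a single point}, which is exactly the content of Lemma \ref{lem_chamber}. Indeed, the hypotheses on $u$ in Proposition \ref{prop_density} (and the fact that $\Gamma$ acts by rational isometries, so the dimension and signature type of the maximal rational subspace of $u^\perp$ is preserved along the orbit, and persists in the closure by semicontinuity of ``containing a rational vector of bounded height'' — more precisely MBM-local-finiteness, Corollary \ref{cor_loc_fin}) guarantee that every $(L',u')\in D$ satisfies the hypotheses of Lemma \ref{lem_chamber}, so $u'$ is nef for exactly one complex structure $I'$ in the fibre $(\Per^\circ)^{-1}(L')$, i.e.\ $(\PerP^\circ)^{-1}(L',u')$ is a single point of $\TeichP^\circ$.

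With that in hand, the final step lifts density from $\DomP$ to $\TeichP^\circ$. Take $(I',u')\in(\PerP^\circ)^{-1}(D)$, so $(L',u')=\PerP^\circ(I',u')\in D$. Choose $\gamma_n\in\Gamma$ with $\gamma_n(L,u)\to(L',u')$. Lift each $\gamma_n$ to $\psi_n\in\MC^\circ$; then $\psi_n(I,u)\in\TeichP^\circ$ has period $\gamma_n(L,u)\to(L',u')$. Now one must extract a subsequential limit of $\psi_n(I,u)$ in $\TeichP^\circ$ and show it equals $(I',u')$. The existence of a limit point uses the structure of the period map: over a small neighbourhood of $(L',u')$ in $\DomP$, the Teichm\"uller space $\TeichP^\circ$ is built from the Hausdorff reduction $\widetilde{\Teich}^\circ\cong\Dom$ by gluing chambers of the positive cone along MBM-walls (this is exactly the picture recalled in Subsection \ref{sec_Teich}), and local finiteness of the walls near $u'$ (Corollary \ref{cor_loc_fin}) means only finitely many chambers are relevant; so the sequence $\psi_n(I,u)$, having periods converging to $(L',u')$, accumulates at finitely many candidate points, each a pair $(I'',u')$ with $I''$ in the fibre over $L'$. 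But the fibre-is-a-point conclusion from Lemma \ref{lem_chamber} forces every such accumulation point to be $(I',u')$ itself. Hence $\psi_n(I,u)\to(I',u')$, so $(I',u')\in T$. In case (2) one additionally precomposes with a fixed $\gamma\in\Gamma$ to move $\DomP^v$ around inside $D$ before running the same argument.

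\textbf{Main obstacle.} The delicate point is the passage from period domain to Teichm\"uller space — i.e.\ controlling the non-Hausdorff behaviour of $\TeichP^\circ$ along the orbit. One needs that the maximal rational subspace of $(u')^\perp$ retains the dimension-and-signature constraint of Lemma \ref{lem_chamber} for \emph{all} $(L',u')\in D$, not just on the orbit itself; this is where local finiteness of MBM walls (Proposition \ref{prop_loc_fin}/Corollary \ref{cor_loc_fin}), together with the boundedness of BBF-squares of MBM classes, does the real work, ruling out new rational vectors appearing in the limit that would wreck uniqueness of the nef chamber. Everything else is equivariance and a compactness/limit extraction that is routine once the fibre of $\PerP^\circ$ over points of $D$ is known to be a singleton.
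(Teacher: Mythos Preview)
Your plan contains a genuine error. You assert that ``every $(L',u')\in D$ satisfies the hypotheses of Lemma~\ref{lem_chamber}'', and you try to justify this via ``semicontinuity of containing a rational vector of bounded height'' and Corollary~\ref{cor_loc_fin}. But the claim is simply false. In case~(1), as you yourself observe, $D=\DomP$; and $\DomP$ contains pairs $(L',u')$ with $u'$ a \emph{rational} isotropic vector, so that $(u')^\perp$ is a rational hyperplane of dimension $b_2(X)-1$, violently failing the hypothesis of Lemma~\ref{lem_chamber}. The condition ``$u^\perp$ contains no nonzero rational vector'' is open, not closed, and does not pass to limits. Corollary~\ref{cor_loc_fin} is of no help: it controls only hyperplanes orthogonal to classes in a discrete set of bounded BBF square (the MBM classes), not arbitrary rational hyperplanes. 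Consequently your final step---using the singleton fibre over the \emph{limit} $(L',u')$ to identify accumulation points of $\psi_n(I,u)$---collapses, and the ``main obstacle'' you flag is a self-inflicted one.

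The paper's argument is shorter and inverts the logic. One checks directly that hypotheses (1) or (2) force the maximal rational subspace of $u^\perp$ to have dimension at most one (in case~(2): any rational $W\subset u^\perp$ with $\dim W\ge 2$ would give $0\neq W\cap v^\perp\subset\langle u,v\rangle^\perp$, a contradiction), and in case~(2) that subspace is spanned by the $q$-positive vector $v\in L$. So Lemma~\ref{lem_chamber} applies at the \emph{base point} $(L,u)$, and by $\Gamma$-equivariance at every point of the orbit $\Gamma\cdot(L,u)$. Hence every $\psi_n(I,u)$ is the unique preimage of its period---a Hausdorff point of $\TeichP^\circ$. The equality $T=(\PerP^\circ)^{-1}(D)$ then follows: given $(I',u')$ with period $(L',u')\in D$, work in a local chart $U\ni I'$ on which $\Per^\circ$ is a homeomorphism; for large $n$ the point of $U$ over $\gamma_n L$, paired with $\gamma_n u$, must coincide with $\psi_n(I,u)$ whenever it lies in $\TeichP^\circ$, since the latter is the \emph{only} point of $\TeichP^\circ$ over $(\gamma_n L,\gamma_n u)$. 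The singleton-fibre property is invoked at the approximating orbit points, where it holds, never at $(L',u')$, where it need not. A minor additional point: the Ratner-theoretic identification of $D$ is not used in the Proposition at all---it enters only afterwards, in Theorems~\ref{thm_density} and~\ref{thm_orbit_closure}.
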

\begin{proof}
We always have $T \subset (\PerP^{\circ})^{-1}(D)$ and we only need to check the opposite inclusion.
Our assumptions on $u$ imply that a $\bbQ$-subspace contained in $u^\perp$
has dimension at most one. In the case 1 this is clear. Assume that we are in the case 2
and $W\subset u^\perp$ is a $\bbQ$-subspace of dimension greater than one.
Then $W\cap v^\perp$ would be a non-trivial $\bbQ$-subspace
in $\langle u, v \rangle^\perp$, contradicting our assumptions. In fact, it is
clear that in the case 2 the biggest rational subspace of $u^\perp$ is spanned by $v$.
Hence we may apply Lemma \ref{lem_chamber} proven above.

Recall that the points of the fibre of the period map $\Per^\circ\colon \Teich^\circ\to\Dom$ over $L$
are in bijection with the chambers of the positive cone $\CC^+$. Since $u$ is in the closure
of exactly one chamber by Lemma \ref{lem_chamber}, the fibre of the parabolic period map $\PerP^\circ$ over $(L,u)$ consists of
exactly one point $(I,u)$. Therefore the $\MC^\circ$-orbit of $(I,u)$ consists of Hausdorff
points of $\TeichP$. Now it is clear that $T = (\PerP^{\circ})^{-1}(D)$.
\end{proof}

Recall the following well known fact about arrangements of hyperplanes in a hyperbolic space.
In the proof of Proposition \ref{prop_density} we have used a version of it, Corollary \ref{cor_loc_fin} below.

\begin{prop}\label{prop_loc_fin}
Let $H$ be a vector space over $\bbR$ of dimension $d+1$, where $d \ge 1$, and
$q\in S^2H^*$ a quadratic form of signature $(1,d)$. Assume that $A\subset H$
is a discrete subset that satisfies the following condition: there exists a constant
$M > 0$ such that for any $x\in A$ we have $q(x) \ge -M$.
Then the collection of hyperplanes $x^\perp$ is locally finite in the positive cone
of $H$. More precisely, if $h\in H$ is positive, i.e.\ $q(h)>0$, then there exists
an open neighbourhood $U\subset H$, $h\in U$ such that $U\cap x^\perp\neq\emptyset$
only for a finite number of $x\in A$.
\end{prop}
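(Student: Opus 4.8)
The plan is to argue by contradiction using the compactness of a suitable slice of the positive cone. Since the positive cone has two components and the statement is invariant under $q(\cdot)$, I may normalize and work inside the component containing $h$; rescaling $h$, assume $q(h) = 1$. The geometric picture is the standard one: the projectivized positive cone is the hyperbolic space $\mathbb{H}^d$, a hyperplane $x^\perp$ with $q(x) < 0$ meets $\mathbb{H}^d$ in a totally geodesic hypersurface, and the distance from $[h]$ to that hypersurface is controlled by the quantity $q(h,x)^2 / (q(h)\,|q(x)|)$. The uniform lower bound $q(x) \geq -M$ is exactly what prevents infinitely many such hypersurfaces from accumulating at a point.

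First I would set up the slice. Using the signature $(1,d)$, pick coordinates so that $q(h_0,\cdot)$ for some fixed positive $h_0$ (take $h_0 = h$) splits $H$ as $\mathbb{R}h \oplus h^\perp$ with $q|_{h^\perp}$ negative definite; write $x = t\,h + x'$ with $x' \in h^\perp$, so $q(x) = t^2 - \|x'\|^2$ where $\|\cdot\|^2 = -q|_{h^\perp}$ is a genuine norm. The condition that $x^\perp$ passes near $h$ is that $q(h,x) = t$ is small. Suppose for contradiction that every neighborhood of $h$ meets $x^\perp$ for infinitely many $x \in A$; then there is an infinite sequence $x_n \in A$, pairwise distinct, and points $h_n \to h$ with $q(h_n, x_n) = 0$. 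Since $h_n \to h$ and $q(h,h) = 1 > 0$, for $n$ large $h_n$ is positive; normalize $x_n$ by a positive scalar so that $\|x_n'\|^2 = 1$, which is legitimate since $x_n' \neq 0$ (as $q(x_n) < 0$). With this normalization $q(x_n) = t_n^2 - 1 \geq -M$ forces $|t_n| \leq \sqrt{M+1}$, so after passing to a subsequence the $x_n = t_n h + x_n'$ converge (the $x_n'$ live in the unit sphere of the finite-dimensional space $h^\perp$, which is compact) to some $x_\infty$ with $\|x_\infty'\| = 1$, hence $x_\infty \neq 0$, and $q(h, x_\infty) = \lim q(h_n, x_n) = 0$ — but I don't even need this last point.

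The contradiction now comes from discreteness. I have infinitely many distinct points $x_n \in A$ that, after multiplying each by a suitable positive real $\lambda_n$, converge in $H$; but $A$ itself is merely discrete, not the set of rays, so I must be slightly careful. The fix: the convergent subsequence $\lambda_n x_n \to x_\infty \neq 0$ means the $\lambda_n$ are bounded above and bounded away from $0$ for large $n$ (since $\|\lambda_n x_n'\| = \lambda_n \|x_n'\|_{\text{orig}}$ and $\lambda_n x_n \to x_\infty$ with $x_\infty$ of controlled norm, while the original $x_n \in A$ cannot be arbitrarily large in norm without $\lambda_n \to 0$, and cannot be arbitrarily small without $x_n$ accumulating at $0$, both forbidden — the first because then $\lambda_n x_n \to 0 \neq x_\infty$, the second by discreteness of $A$ away from... ). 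Hmm — to make this clean I would instead argue directly with the original $x_n$: rescale the \emph{hypothesis} rather than the conclusion. Replace each $x_n$ by itself (no rescaling) and observe $q(h_n, x_n) = 0$; then $t_n := q(h, x_n)/q(h,h)$ satisfies $t_n = q(h - h_n, x_n)$. If the $x_n$ had bounded norm, a subsequence converges in $H$ to some limit, which by discreteness of $A$ must be \emph{equal} to $x_n$ for all large $n$, contradicting that the $x_n$ are distinct. So $\|x_n\| \to \infty$ along the subsequence. But then consider $y_n = x_n/\|x_n\|$, which lies on the unit sphere, hence (after a further subsequence) converges to some $y_\infty$ with $\|y_\infty\| = 1$; from $q(x_n) \geq -M$ we get $q(y_n) \geq -M/\|x_n\|^2 \to 0$, so $q(y_\infty) \geq 0$; and $0 = q(h_n, x_n)$ gives $q(h_n, y_n) = 0$, so $q(h, y_\infty) = 0$. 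Thus $y_\infty \in h^\perp$ with $q(y_\infty) \geq 0$, but $q|_{h^\perp}$ is negative definite and $y_\infty \neq 0$ — contradiction.

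The main obstacle is exactly this juggling of normalizations: $A$ is only discrete as a subset of $H$, but the hyperplanes $x^\perp$ depend only on the ray through $x$, so one must either rescale consistently and track that the scalars stay in a compact interval, or — as in the argument above — avoid rescaling $A$ and instead exploit that an unbounded sequence, after projectivizing, produces a nonzero vector in $h^\perp$ with $q \geq 0$, which is impossible by definiteness of $q|_{h^\perp}$. Once this is organized, the uniform bound $-M$ enters in precisely one place (to kill $q(y_\infty)$ in the limit), and the conclusion follows. The deduction of Corollary \ref{cor_loc_fin} used in the proof of Proposition \ref{prop_density} is then immediate by applying this to $A = \MBM_I^{1,1}$ inside $H = V^{1,1}_{I,\bbR}$, using the boundedness of BBF squares of MBM classes recalled in section \ref{sec_Teich}.
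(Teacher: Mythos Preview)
Your second argument (after ``to make this clean'') is correct and complete. It is, however, organized differently from the paper's proof. The paper argues directly: it writes down an explicit neighbourhood $U = \{th + w : 1/2 < t < 2,\ w\in h^\perp,\ |q(w)| < 1/8\}$, decomposes any $x = sh + v$ with $x^\perp\cap U\neq\emptyset$, and via Cauchy--Schwarz on the negative-definite space $h^\perp$ obtains the explicit bounds $|q(v)| < 2M$ and $s^2 < M$; hence all such $x$ lie in a fixed compact set, and discreteness finishes. Your route is a clean dichotomy by contradiction: either the offending $x_n$ stay bounded, and discreteness of $A$ forces them to be eventually constant; or $\|x_n\|\to\infty$, and the normalized limit $y_\infty$ is a nonzero vector in $h^\perp$ with $q(y_\infty)\ge 0$, contradicting negative-definiteness of $q|_{h^\perp}$. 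Both encode the same geometric point, but yours trades explicit constants for a soft compactness argument; the paper's version has the minor advantage of producing a concrete $U$. Two small remarks: your first attempt, with the rescaling of $x_n$, was indeed headed for trouble for exactly the reason you identified (discreteness is a property of $A$, not of its projectivization), and you were right to abandon it; and your closing claim that Corollary~\ref{cor_loc_fin} is ``immediate'' skips a genuine step, since there $q(u)=0$ so $u$ is not in the positive cone --- the paper handles this by translating a neighbourhood of a nearby positive vector along a direction in $H_0^\perp$.
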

\begin{proof}
Clearly, we may assume that $q(h)=1$. Let $U$ be the set of linear
combinations $t h + w$, where $t\in \bbR$ with $1/2 < t < 2$ and $w\in h^\perp$ with $|q(w)| < 1/8$.
Assume that for some $x\in A$ we have $U\cap x^\perp\neq \emptyset$. Then
$x = sh+v$ for some $s\in\bbR$ and $v\in h^\perp$. Since $U\cap x^\perp\neq \emptyset$, there exists $y=th+w\in U$
with $w\in h^\perp$ such that $q(x,y) = 0$. The latter condition implies
that $st + q(v,w) = 0$ and therefore $|st| \le|q(v)|^{1/2}|q(w)|^{1/2}$. 
From the definition of $U$ we see that $1/|t| < 2$ and $|q(w)| < 1/8$,
implying that $s^2 < |q(v)|/2$.

Next observe that $q(x) = s^2 + q(v)$ and $q(v) \le 0$ since $q(v,h) = 0$ and the signature
of $H$ is $(1,d)$. The condition $q(x) \ge -M$ implies that $|q(v)| \le M + s^2$,
and therefore, $|q(v)| < M + |q(v)|/2$. This implies that $|q(v)|< 2M$
and $s^2 < M$. We conclude that $x$ lies in a compact subset of $H$,
and since $A$ is discrete, there can be only a finite number of such $x\in A$.
\end{proof}

\begin{cor}\label{cor_loc_fin}
Assume that $H$, $q$ and $A$ are as in Proposition \ref{prop_loc_fin}. Assume moreover
that there exists a subspace $H_0\subsetneq H$ such that $A\subset H_0$.
Let $u\in H\setminus H_0$ be such that $q(u) = 0$. Then there exists
an open neighbourhood $U\subset H$, $u\in U$ such that $U\cap x^\perp\neq\emptyset$
only for a finite number of $x\in A$.
\end{cor}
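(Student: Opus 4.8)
The plan is to argue by contradiction and analyse the possible limiting directions of a hypothetical infinite family of hyperplanes $x^\perp$, $x\in A$, accumulating at $u$. The point $u$ is isotropic, so Proposition \ref{prop_loc_fin} does not apply to it directly — the neighbourhoods constructed there sit strictly inside the positive cone — and the extra hypotheses $A\subset H_0\subsetneq H$, $u\notin H_0$, $q(u)=0$ are precisely what compensates. So suppose no neighbourhood of $u$ as in the statement exists. Taking a countable neighbourhood basis $B(u,1/k)$ of $u$, each set $\{x\in A : x^\perp\cap B(u,1/k)\neq\emptyset\}$ is then infinite, and a diagonal argument produces pairwise distinct $x_n\in A$ and points $y_n\in x_n^\perp$ with $y_n\to u$.

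Since $A$ is discrete (and closed), the norms $\|x_n\|$ are unbounded, so after passing to a subsequence I may assume $\|x_n\|\to\infty$ and that the unit vectors $\hat x_n=x_n/\|x_n\|$ converge to a unit vector $\hat x$. I would then record three properties of $\hat x$: first, $\hat x\in H_0$, because $H_0$ is closed; second, $q(\hat x)=\lim_n q(x_n)/\|x_n\|^2\ge 0$, by continuity of $q$ and the lower bound $q(x_n)\ge -M$; third, $q(\hat x,u)=\lim_n q(\hat x_n,y_n)=0$, since $q(x_n,y_n)=0$ for every $n$.

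The heart of the argument is then to rule out the two remaining cases for $q(\hat x)$. If $q(\hat x)>0$, then $\hat x^\perp$ is negative definite (as $q$ has signature $(1,d)$), contradicting $u\in\hat x^\perp$, $q(u)=0$, $u\neq 0$. If $q(\hat x)=0$, then $\hat x$ and $u$ are both nonzero isotropic vectors; they are linearly independent because $\hat x\in H_0$ while $u\notin H_0$, so $\langle u,\hat x\rangle$ is a $2$-dimensional subspace on which $q$ vanishes identically (using $q(\hat x,u)=0$) — impossible, since a form of signature $(1,d)$ has no totally isotropic subspace of dimension $\ge 2$. Either way we reach a contradiction.

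The step I expect to be the main obstacle is precisely this last dichotomy: one has to recognise that the very feature blocking a direct appeal to Proposition \ref{prop_loc_fin}, namely that $u$ lies on the light cone rather than inside it, is exactly what forces the accumulation direction $\hat x$ to be an isotropic vector, perpendicular to $u$ and lying in $H_0$ — a configuration that signature $(1,d)$ forbids. The remaining ingredients (the diagonal extraction, the compactness passage to $\hat x$, and discreteness forcing $\|x_n\|\to\infty$) are routine. Alternatively, one could perturb $u$ to a positive vector $h=u+\varepsilon w$ with $q(u,w)>0$ and deduce the corollary from Proposition \ref{prop_loc_fin} applied to $h$ with a slightly widened neighbourhood, but the contradiction argument above is cleaner and self-contained.
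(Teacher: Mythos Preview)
Your contradiction argument is correct and complete: the extraction of the limiting direction $\hat x$, the three properties $\hat x\in H_0$, $q(\hat x)\ge 0$, $q(\hat x,u)=0$, and the signature dichotomy all go through as you describe. (The parenthetical ``and closed'' is needed and is also implicitly used in the paper's own proof of Proposition~\ref{prop_loc_fin}; in the application $A$ sits inside a lattice, so there is no issue.)

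The paper, however, takes a different and shorter route --- essentially the ``alternative'' you sketch at the end, but with one extra idea that makes it work cleanly. Rather than a generic perturbation, the paper chooses $z\in H_0^\perp$ with $q(u,z)\neq 0$ (such $z$ exists because $u\notin H_0$), sets $h=u+tz$ with $t$ chosen so that $q(h)>0$, and applies Proposition~\ref{prop_loc_fin} to $h$ to get a neighbourhood $U'$. The key observation is that $z\perp x$ for every $x\in A\subset H_0$, so translation by $tz$ maps each hyperplane $x^\perp$ to itself; hence $U=U'-tz$ is a neighbourhood of $u$ meeting the same finite collection of hyperplanes as $U'$. Your version of the alternative (``a slightly widened neighbourhood'') would not work without this translation-invariance, since Proposition~\ref{prop_loc_fin} gives no control once you leave the positive cone. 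Your main argument has the virtue of being self-contained and of exhibiting the signature obstruction explicitly, whereas the paper's proof is a two-line reduction to Proposition~\ref{prop_loc_fin} that exploits the special direction $H_0^\perp$ in which all the hyperplanes are cylindrical.
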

\begin{proof}
The assumption $u\notin H_0$ implies that there exists $z\in H_0^\perp$ with $q(u,z)\neq 0$.
For $h = u + tz$ with $t\in \bbR$ we have $q(h) = 2tq(u,z) + t^2q(z)$ and we can
find such $t$ that $q(h)>0$. Let $U'$ be a neighbourhood of $h$ obtained by applying
Proposition \ref{prop_loc_fin} to $h$. Note that for any $x\in A$ the hyperplane $x^\perp$
contains $tz$, so $y\in x^\perp$ if and only if $y + tz\in x^\perp$. Hence the open subset
$U = U' - tz$ is the required neighbourhood of $u$.
\end{proof}

We finally prove the main result about density of the orbits in the parabolic Teichm\"uller space.
We recall our notation: $X$ is a hyperk\"ahler manifold, $d = b_2(X)$, $\TeichP^\circ$ is a fixed
component of the parabolic Teichm\"uller space, $\MC^\circ$ is the subgroup of the mapping
class group that acts on $\TeichP^\circ$.

\begin{thm}\label{thm_density}
Assume that $d\ge 7$, $(I,u)\in \TeichP^\circ$ and $u^\perp$ does not contain non-zero rational vectors. Then
the $\MC^\circ$-orbit of $(I,u)$ is dense in $\TeichP^\circ$.
\end{thm}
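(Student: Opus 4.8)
The plan is to transport the statement to the parabolic period domain, settle it there, and lift it back. Write $(L,u) = \PerP(I,u) \in \DomP$, let $T \subset \TeichP^\circ$ be the closure of the $\MC^\circ$-orbit of $(I,u)$, and let $D \subset \DomP$ be the closure of the $\Gamma$-orbit of $(L,u)$. Both ingredients needed are already available: Corollary \ref{cor_period1} controls $D$, and Proposition \ref{prop_density} relates $T$ to $D$.

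First I would invoke Corollary \ref{cor_period1}: its hypotheses are exactly $d \ge 7$ and that $u^\perp$ contains no non-zero rational vector, so the $\Gamma$-orbit of $(L,u)$ is dense, that is $D = \DomP$. Then I would apply Proposition \ref{prop_density} in its first case, whose hypothesis is again that $u^\perp$ contains no non-zero rational vector; this gives $T = (\PerP^\circ)^{-1}(D)$. Since $D = \DomP$ is the entire target of $\PerP^\circ$, we conclude $T = (\PerP^\circ)^{-1}(\DomP) = \TeichP^\circ$, i.e.\ the $\MC^\circ$-orbit of $(I,u)$ is dense in $\TeichP^\circ$.

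I do not expect a real obstacle at this last step: the theorem is essentially just the composition of the two cited results. The places where care is genuinely needed --- the non-Hausdorffness of $\TeichP^\circ$, and the fact that $u$ lies in the closure of a unique chamber of the positive cone (namely the K\"ahler cone of $X_I$), so that $\PerP^\circ$ has a single Hausdorff point in its fibre over $(L,u)$ --- are precisely what Lemma \ref{lem_chamber} and Proposition \ref{prop_density} dispose of, and they apply verbatim under the present hypothesis on $u^\perp$. The substantive work sits upstream, in the Lie-algebraic Proposition \ref{prop_algebra} combined with Ratner's theorem (Theorem \ref{thm_Ratner}) that underlie Corollary \ref{cor_period1}.
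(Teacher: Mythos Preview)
Your proof is correct and matches the paper's own argument: the paper's proof of Theorem \ref{thm_density} is exactly the composition of Proposition \ref{prop_density} (case 1) with Corollary \ref{cor_period1}, and your proposal spells out precisely this.
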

\begin{proof}
According to Proposition \ref{prop_density} it is enough to check that the orbit 
of $\PerP(I,u)$ is dense in $\DomP$. This follows from Corollary \ref{cor_period1}.
\end{proof}

Now we consider the subdomain $\DD_{N,p}\subset\DomP$ for a $q$-negative subspace $N\subset H^2(X,\bbQ)$
and the corresponding subspace $\Teich_{N,p}$ of the parabolic Teichm\"uller space, see section \ref{sec_def_domains} for
the definitions. We denote by $\MC^\circ_N\subset \MC^\circ$ the stabilizer of $N$ in the mapping class group.

\begin{thm}\label{thm_density2}
Let $(I,u)\in \Teich_{N,p}^\circ$.
Assume that $b_2(X) - \dim(N) \ge 7$ and $u^\perp\cap H^2(X,\bbQ) = N$.
Moreover, assume that $N$ does not contain any MBM classes of $X$.
Then the $\MC_N^\circ$-orbit of $(I,u)$ is dense in $\Teich_{N,p}^\circ$.
\end{thm}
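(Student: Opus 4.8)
The plan is to imitate the proof of Theorem~\ref{thm_density}, with the negative lattice $N$ playing the role that $H^2(X,\bbQ)$ plays there. The reduction to the period domain is already available: writing $(L,u)=\PerP(I,u)$, Corollary~\ref{cor_period11} (applicable since $b_2(X)-\dim N\ge 7$ and $u^\perp\cap H^2(X,\bbQ)=N$) says the $\Gamma_N$-orbit of $(L,u)$ is dense in $\DD_{N,p}$. So, exactly as in Theorem~\ref{thm_density}, it remains to transport this density back to $\Teich_{N,p}^\circ$ through $\PerP^\circ\colon\Teich_{N,p}^\circ\to\DD_{N,p}$, and for that the only point requiring a genuinely new argument is the analogue of Lemma~\ref{lem_chamber}: the nef class $u$ lies in the closure of exactly one chamber of the positive cone $\CC^+\subset V^{1,1}_{I,\bbR}$, necessarily the closure of $\KK_{X_I}$.

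First I would establish this chamber-uniqueness statement. Put $H=V^{1,1}_{I,\bbR}=L^\perp$ and $H_0=V^{1,1}_{I,\bbQ}\otimes_\bbQ\bbR\subset H$. Since $L\perp H_0$, the rational subspace $H_0^\perp\subset V_\bbR$ contains the $q$-positive plane $L$, so it is not negative definite and in particular is not contained in $N_\bbR$. This gives at once: (i) $H_0\ne H$, for otherwise $L^\perp$ and hence $L$ would be rational, forcing $L\subset u^\perp\cap H^2(X,\bbQ)=N$, impossible as $q|_L>0$ while $q|_N<0$; (ii) $u\notin H_0$, for otherwise $H_0^\perp\subset u^\perp$, so $H_0^\perp\cap H^2(X,\bbQ)\subset u^\perp\cap H^2(X,\bbQ)=N$, giving $H_0^\perp\subset N_\bbR$ and contradicting $L\subset H_0^\perp$. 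Moreover no MBM class is orthogonal to $u$: an $x\in\MBM_I^{1,1}\subset V^{1,1}_{I,\bbQ}$ with $q(x,u)=0$ would lie in $u^\perp\cap H^2(X,\bbQ)=N$, contradicting the hypothesis that $N$ contains no MBM classes of $X$. Now $\MBM_I^{1,1}\subset H_0\subsetneq H$ and $u\in H\setminus H_0$ with $q(u)=0$; since the BBF squares of MBM classes are bounded, Corollary~\ref{cor_loc_fin} applies and shows that the arrangement $\{x^\perp\}_{x\in\MBM_I^{1,1}}$ is locally finite near $u$. As $u$ lies on none of these hyperplanes and is nef, a small convex neighbourhood of $u$ meets $\CC^+$ in a connected set avoiding every MBM wall, hence contained in a single chamber; that chamber contains Kähler classes, so $u$ lies in the closure of exactly one chamber, namely $\KK_{X_I}$.

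The conclusion then follows the proof of Proposition~\ref{prop_density}. Because $\Gamma_N$ fixes $N$, every point $\gamma(L,u)$ of the $\Gamma_N$-orbit again satisfies $(\gamma u)^\perp\cap H^2(X,\bbQ)=N$, so the chamber-uniqueness statement holds along the whole orbit and the fibre of $\PerP^\circ$ over each orbit point is a single point; the orbit thus consists of Hausdorff points of $\Teich_{N,p}^\circ$, near which $\PerP^\circ$ is a local homeomorphism, and $\MC_N^\circ$ surjects onto $\Gamma_N$ (the $\MC^\circ$-action on $H^2(X,\bbZ)$ factors through $\Gamma$, so the stabilisers of $N$ match up). Hence the closure $T$ of the $\MC_N^\circ$-orbit of $(I,u)$ equals $(\PerP^\circ)^{-1}(D)$, where $D$ is the closure of the $\Gamma_N$-orbit of $(L,u)$; by Corollary~\ref{cor_period11}, $D=\DD_{N,p}$, and since $\PerP^\circ$ is surjective onto $\DD_{N,p}$, $T=\Teich_{N,p}^\circ$. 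The main obstacle is the chamber-uniqueness step: unlike in Theorem~\ref{thm_density}, the maximal rational subspace of $u^\perp$ is now the possibly large negative lattice $N$, and what rescues the argument is that $H_0^\perp$ always contains the positive plane $L$ and so cannot be swallowed by $N$ — this is exactly what rules out $u\in H_0$ and lets one invoke the local-finiteness corollary at the boundary point $u$ — together with the assumption that $N$ carries no MBM classes, which is precisely what keeps every MBM wall away from $u$. The remaining verifications (étaleness of $\PerP^\circ$ near one-point fibres, surjectivity onto $\DD_{N,p}$, surjectivity of $\MC_N^\circ\to\Gamma_N$) are routine transcriptions of the facts used in the unrestricted case.
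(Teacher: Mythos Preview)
Your proposal is correct and follows essentially the same approach as the paper: reduce to density of the $\Gamma_N$-orbit in $\DD_{N,p}$ via Corollary~\ref{cor_period11}, and check that the analogue of Proposition~\ref{prop_density} applies because $u^\perp$ contains no MBM classes. The paper's own proof is two sentences to this effect; you have spelled out in full the verification of the chamber-uniqueness lemma (the analogue of Lemma~\ref{lem_chamber}) that the paper leaves implicit, and your argument there---using that $H_0^\perp$ contains the positive plane $L$ and hence cannot sit inside the negative space $N_\bbR$---is exactly the right replacement for the dimension count in the original lemma.
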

\begin{proof}
Analogous to the proof of Theorem \ref{thm_density}, considering
the orthogonal complement of $N$ inside $H^2(X,\bbQ)$. Note that
by our assumption $u^\perp$ does not contain any MBM classes,
so the analogue of Proposition \ref{prop_density} may be applied. 
\end{proof}

We also consider the case when $u^\perp$ contains one-dimensional positive rational subspace
spanned by a vector $v\in H^{2,0}(X_I)\oplus H^{0,2}(X_I)$. We recall that
$\DomP^v = \{(L,u)\in \DomP\st v\in L\}\subset \DomP$.
In this case we obtain the following statement.

\begin{thm}\label{thm_orbit_closure}
Assume that $d\ge 7$ and for $(I,u)\in \TeichP^\circ$ there exists a non-zero vector
$v\in (H^{2,0}(X_I)\oplus H^{0,2}(X_I))\cap H^2(X_I,\bbQ)$ such that
$\langle u,v\rangle^\perp$ does not contain non-zero rational vectors.
Then the closure of the $\MC^\circ$-orbit of $(I,u)$ contains ${\PerP^{\circ}}^{-1}(\DomP^v)$.
\end{thm}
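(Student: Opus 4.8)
The plan is to deduce the theorem from Corollary \ref{cor_period2} and Proposition \ref{prop_density}, in exactly the way Theorem \ref{thm_density} was deduced from Corollary \ref{cor_period1} and Proposition \ref{prop_density}; the only genuinely new point is to observe that the rational vector $v$ lies in the period plane of $I$. So first I would set $(L,u) = \PerP^{\circ}(I,u)$, so that $L = \langle \mathrm{Re}[\sigma_I],\mathrm{Im}[\sigma_I]\rangle$. Since $H^{2,0}(X_I) = \bbC[\sigma_I]$ and $H^{0,2}(X_I) = \bbC[\bar{\sigma}_I]$, any real class in $H^{2,0}(X_I)\oplus H^{0,2}(X_I)$ has the form $a[\sigma_I] + \bar{a}[\bar{\sigma}_I]$ for some $a\in\bbC$ and hence lies in $L$; therefore $v\in L$. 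As $q|_L$ is positive definite and $v\neq 0$ we get $q(v)>0$, so $\DomP^v$ is well defined and $(L,u)\in\DomP^v$.

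Next I would apply Corollary \ref{cor_period2} to $(L,u)\in\DomP^v$: its hypotheses are precisely ``$d\ge 7$'' and ``$\langle u,v\rangle^\perp$ contains no non-zero rational vector'', both of which are assumed, so the closure $D\subset\DomP$ of the $\Gamma$-orbit of $(L,u)$ contains $\DomP^v$. Then I would invoke Proposition \ref{prop_density} in its second alternative, with this same rational vector $v\in L$: since $\langle u,v\rangle^\perp$ contains no non-zero rational vector, the proposition gives $T = (\PerP^{\circ})^{-1}(D)$, where $T$ is the closure of the $\MC^\circ$-orbit of $(I,u)$ in $\TeichP^\circ$. Combining the two inclusions, $(\PerP^{\circ})^{-1}(\DomP^v)\subseteq (\PerP^{\circ})^{-1}(D) = T$, which is the asserted statement.

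The step requiring a little care — and the only thing resembling an obstacle — is the verification that the hypotheses of Proposition \ref{prop_density} really hold: one must know that $v$ is rational, lies in $L$, and is $q$-positive, so that (via Lemma \ref{lem_chamber}) the fibre of $\PerP^{\circ}$ over $(L,u)$ is a single Hausdorff point and the passage from density in $\DomP$ to density in $\TeichP^\circ$ is legitimate. All the substantive machinery — Ratner's theorem, the Lie-algebra computation behind Proposition \ref{prop_algebra}, and the local finiteness of the MBM hyperplane arrangement — is already supplied by the cited results, so no further work is needed.
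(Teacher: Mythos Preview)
Your proof is correct and follows exactly the same approach as the paper, which simply invokes Proposition \ref{prop_density} (case 2) together with Corollary \ref{cor_period2}. Your additional verification that $v\in L$ and $q(v)>0$ makes explicit what the paper leaves implicit, but the argument is identical.
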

\begin{proof}
According to Proposition \ref{prop_density} it is enough to check that the closure of the orbit 
of $\PerP(I,u)$ contains $\DomP^v$. This follows from Corollary \ref{cor_period2}.
\end{proof}

\section{Proofs}

\subsection{Proof of Theorem \ref{thm_main}}\label{sec_main_proof}

{\em Case 1.} We first assume that $u^\perp$ does not contain non-zero rational vectors.
Let us denote by $I$ the complex structure on our hyperk\"ahler manifold and
fix the connected component $\Teich^\circ$ of the Teichm\"uller space containing $I$. It follows from \cite[Theorem 1.1]{AV2}
that there exists $I_0\in \Teich^\circ$ such that the Picard group of $X_{I_0}$ has rank two
and $X_{I_0}$ admits a hyperbolic automorphism $\gamma$. By the construction in \cite{AV2},
the K\"ahler cone of $X_{I_0}$ coincides with the positive cone, and $\gamma$ admits
an eigenvector $u_0\in H^{1,1}_{I,\bbR}(X)$ with eigenvalue $\lambda > 1$, such that
$q(u_0) = 0$. It follows that the class $u_0$ may be chosen nef, hence parabolic.
Since the action of $\gamma$ preserves the canonical volume form on $X_{I_0}$ (see section \ref{sec_volume}),
the class $u_0$ is rigid, as we recalled in section \ref{sec_dynam}.

Let us consider the points $(I,u)$ and $(I_0,u_0)$ of the parabolic Teichm\"uller space $\TeichP^\circ$.
By Theorem \ref{thm_density} the $\MC^\circ$-orbit of $(I,u)$ is dense in $\TeichP^\circ$,
hence its closure contains $(I_0,u_0)$. Consider the universal deformation $\pi\colon\XX\to B$ of $X_{I_0}$,
where the base $B$ may be considered an open neighbourhood of $I_0$ in $\Teich^\circ$,
and $\XX_0 \simeq X_{I_0}$.
It follows from the density of the orbit of $(I,u)$ that there exists a sequence of points
$t_i\in B$, $t_i\to 0$, a sequence of elements $\mu_i\in \MC^\circ$ and a sequence of cohomology
classes $u_i\in H^2(X,\bbR)$ such that $\XX_{t_i} \simeq X_{\mu_i^* I}$, $u_i =\mu_i^* u$,
$u_i \to u_0$ when $i\to +\infty$. The classes $u_i$ are parabolic, in particular they are pseudo-effective.
Let $\delta(u)$ be the diameter of $u$ with respect to the canonical volume form, see section \ref{sec_volume}.
The action of $\MC^\circ$ preserves the diameter by Proposition \ref{prop_diam_invar}, so $\delta(u_i) = \delta(u)$. By Theorem \ref{thm_diam} we have
$0 = \delta(u_0) \ge \limsup_{i\to\infty} \delta(u_i) = \delta(u)$, hence $u$ is rigid.

{\em Case 2.} If $u^\perp\cap H^2(X,\bbQ)$ is spanned by a rational vector $v\in H^{2,0}(X)\oplus H^{0,2}(X)$,
we apply Theorem \ref{thm_orbit_closure}. We only need to check that the subdomain $\DomP^v\subset \DomP$
contains the period of a hyperk\"ahler manifold with hyperbolic automorphism. Since under
our assumptions $\dim(v^\perp)\ge 6$, we may apply \cite[Theorem 3.4]{AV2}. The construction
is the same as \cite[Corollary 3.5]{AV2}, but we apply it to the sublattice $H^2(X,\bbZ)\cap v^\perp$.
The rest of the proof is the same as in Case 1 above.

\subsection{Proof of Corollary \ref{cor_main}}\label{sec_cor_proof}
Denote $H = H^{1,1}_\bbR(X)$ and let $H_0$ be the $\bbR$-subspace of $H$ spanned by $H\cap H^2(X,\bbQ)$.
By the assumptions on $X$ we have $H_0\neq H$, otherwise $H^{2,0}(X)\oplus H^{0,2}(X)$ would be defined over $\bbQ$.

Let $v\in H$ be an arbitrary non-zero vector and $h\in H_0$ a K\"ahler class such that $h\notin \bbR v$.
Denote by $\CC^+\subset H$ the positive cone and by $\ell \subset H$ the affine line $\ell = h +\bbR v$.
Recall from section \ref{sec_Teich} that $\CC^+$ is cut into chambers by the hyperplanes
orthogonal to the MBM classes of type $(1,1)$, and the K\"ahler cone $\KK_X$ is one of the
chambers. Note that $\MBM^{1,1}\subset H_0$, hence for $x\in\MBM^{1,1}$ and $a\in\bbR$ we have $q(x,h+av) = q(x,h)\neq 0$,
because $h$ is a K\"ahler class. It follows that $\ell\cap x^\perp = \emptyset$ for any $x\in \MBM^{1,1}$,
hence $\ell\cap\CC^+\subset \KK_X$.

It is easy to check (and geometrically obvious) that there exists $a_0\in \bbR$ such that $u = h+a_0v\in \partial \CC^+$,
i.e.\ $q(u)=0$. It follows that $u\in \partial \KK_X$. Let $B_0\subset \KK_X$ be an open
neighbourhood of $h$. Then $B = B_0 + a_0 v$ is an open neighbourhood of $u$
and $U = B\cap \partial\CC^+$ is a non-empty open subset of $\partial\KK_X$.

By our assumptions $(H^{2,0}(X)\oplus H^{0,2}(X)) \cap H^2(X,\bbQ)$ is either zero- or one-dimensional.
Let $v$ be a generator of this $\bbQ$-subspace ($v=0$ if the subspace is trivial). 
It follows that for any $0\neq x\in (v^\perp\cap H^2(X,\bbQ))$ the intersection $H_x = x^\perp\cap H$
is a hyperplane in $H$. The hyperplane $H_x$ can not contain $U$, because $U$ is an
open subset of a non-degenerate quadratic cone. Hence $U_x = U\cap H_x$ is either empty or a
codimension one subset of $U$. Let $u\in U\setminus \cup\, U_x$, where
$x$ runs over all non-zero elements of $v^\perp\cap H^2(X,\bbQ)$. Then by Theorem \ref{thm_main}
the class $u$ is rigid. Such classes are clearly dense in $U$. This completes the proof.

\subsection{Proof of Theorem \ref{thm_main2}}\label{sec_main2_proof}

The same proof as for Theorem \ref{thm_main}, see section \ref{sec_main_proof} above,
using Theorem \ref{thm_density2} instead of Theorem \ref{thm_density}.

\subsection{Proof of Theorem \ref{_main3_Theorem_}}\label{sec_main3_proof}

We need the following lemma, which is proven in
\cite{_Filip_Tosatti:canonical_}.

\begin{lem}\label{_dichotomy_Lemma_}
Let  $V$ be a real quadratic vector space
of signature $(1,n)$ with $n \geq 2$, and $V_\Z$ a 
quadratic lattice in $V$. Consider a subgroup
$\Gamma$ of finite index in $\rmO(V_\Z)$,
and let $\6\CC$ be one of the two connected components of
the set of all non-zero isotropic vectors in $V$.
Then for any $\eta\in \6\CC$,
its $\Gamma$-orbit is dense when $\eta$ is irrational,
and closed otherwise.
\end{lem}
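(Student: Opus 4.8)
The statement is the classical dichotomy for $\Gamma$-orbits on the boundary of hyperbolic space, so the plan is to translate the problem into the language of the real hyperbolic space $\bbH^n$ and its isometry group, and then invoke known results about lattices acting on the boundary sphere. Concretely, the positive cone $\CC^+$ in $V$ (one of the two components of $\{q>0\}$) gives a model of $\bbH^n$ as its projectivization, and $\6\CC$ (one component of the isotropic cone, chosen compatibly) projectivizes to the boundary sphere $\6\bbH^n \simeq S^{n-1}$. The group $\SO^\circ(V,q)$ acts as the group of orientation-preserving isometries of $\bbH^n$, and $\Gamma$, being of finite index in $\rmO(V_\Z)$, is an arithmetic lattice in this group. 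Thus the claim becomes: a lattice $\Gamma$ in $\mathrm{Isom}(\bbH^n)$ acts on $\6\bbH^n$ with the property that the orbit of a point $\xi$ is closed if $\xi$ is a rational point (i.e.\ $\xi\in\bbP(V_\Q)$), and dense otherwise.

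First I would treat the rational case. If $\eta\in\6\CC$ is rational, then up to scaling it lies in $V_\Z$ and is a primitive isotropic vector; its $\Gamma$-orbit is contained in the countable discrete set of primitive isotropic vectors of $V_\Z$ lying in the chosen component, so the projectivized orbit is a countable subset of $S^{n-1}$. To see it is actually closed, one uses that the stabilizer of an isotropic line in $\SO^\circ(V,q)$ is a parabolic subgroup $P$, and $\Gamma\cap P$ is a lattice in $P$ (Borel–Harish-Chandra, or directly: the stabilizer of a cusp in an arithmetic group is a lattice in the cusp subgroup); equivalently, by the reduction theory of arithmetic groups the cusps of $\Gamma\backslash\bbH^n$ are in bijection with $\Gamma$-orbits of rational isotropic lines and there are finitely many of them, so each such orbit is discrete, hence closed, in $S^{n-1}$. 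Alternatively, and perhaps more cleanly for the write-up, one can argue: a sequence $\gamma_i\eta$ converging in $S^{n-1}$ to some $\zeta$ would, by discreteness of $\Gamma$ together with the fact that $\eta$ is a cusp, force $\gamma_i\eta$ to be eventually constant.

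The substantive half is the irrational case, and this is where I expect the main obstacle to lie. The cleanest route is via Hedlund's theorem / the classification of orbit closures of lattice actions on the boundary: for a lattice $\Gamma$ in $\mathrm{Isom}(\bbH^n)$, the action on $\6\bbH^n$ is minimal away from the parabolic (cusp) points — more precisely, the only proper closed $\Gamma$-invariant subsets of $S^{n-1}$ that can arise as orbit closures of single points are the finite unions of cusp orbits, which are exactly the rational points. Thus if $\overline{\Gamma\eta}\neq S^{n-1}$, then $\overline{\Gamma\eta}$ is a $\Gamma$-invariant closed set disjoint from... no: the argument is that the limit set of $\Gamma$ is all of $S^{n-1}$ (since $\Gamma$ is a lattice, hence non-elementary and of finite covolume), and the limit set is the unique minimal closed $\Gamma$-invariant subset; so $\overline{\Gamma\eta}\supseteq \Lambda(\Gamma) = S^{n-1}$ unless $\eta$ itself is not a limit point in the relevant sense — but every boundary point is either a conical limit point or (a bounded parabolic point, i.e.) a cusp, and the conical limit points have dense orbit while the cusps are exactly the rational points. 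So the dichotomy follows once we identify "cusp of $\Gamma$" with "rational isotropic line", which is reduction theory for $\rmO(V_\Z)$. The main thing to be careful about is precisely this identification — that $\xi\in S^{n-1}$ is a parabolic fixed point of $\Gamma$ if and only if it is defined over $\Q$ — and that we are using $n\geq 2$ to guarantee $\Gamma$ has finite covolume with non-elementary limit set (the case $n=1$, where $\bbH^1$ has a two-point boundary, is genuinely different and is excluded by hypothesis). In the write-up I would cite the relevant statement from \cite{_Filip_Tosatti:canonical_} for the precise form and simply sketch this reduction; alternatively, one notes that this is a special case of the Ratner-type density already invoked via Corollary \ref{cor_Ratner}, applied to the horospherical (unipotent radical of the parabolic) subgroup, giving density of the orbit of any non-parabolic point directly.
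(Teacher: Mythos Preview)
Your main approach has a genuine gap: you conflate the cone $\6\CC \subset V$ of isotropic vectors with its projectivization $S^{n-1}$, and the dichotomy lives only on the former. On the sphere the $\Gamma$-action is \emph{minimal}: the limit set $\Lambda(\Gamma)$ equals $S^{n-1}$ and is the unique minimal closed invariant subset, so every orbit --- rational or not --- is dense in $S^{n-1}$. (For instance the $\SL_2(\bbZ)$-orbit of $\infty$ on $\bbP^1(\bbR)$ is $\bbP^1(\bbQ)$, which is dense.) Your claim that cusp orbits are ``discrete, hence closed, in $S^{n-1}$'' is therefore false, and the limit-set argument cannot separate rational from irrational $\eta$. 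Your opening observation in the rational case --- that $\Gamma\eta$ lies in the discrete set of integral isotropic vectors and is hence closed in $\6\CC$ --- is correct and already settles that half; the trouble begins when you project to $S^{n-1}$.

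The paper's proof stays on the cone. It writes $\6\CC = G/H$ with $H$ the stabilizer of the \emph{vector} $\eta$ (so $H$ has the form $MN$ with $M$ compact and $N$ the unipotent radical, rather than the full parabolic $MAN$ stabilizing the line), notes that the $\Gamma$-action on $G/H$ is ergodic by Moore's theorem, and then invokes the Orbit Closure Dichotomy of \cite[Theorem 7.4.2]{_Filip_Tosatti:canonical_}, which in turn rests on Dani's classification of horospherical orbit closures \cite{_Dani:Horospherical_}. This is essentially your final ``alternative'': the horospherical input is exactly what is needed, but it must be applied to $G/H = \6\CC$ rather than to $G/P = S^{n-1}$. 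The missing $A$-direction in $H$ is precisely what allows rational orbits to be discrete in the cone while remaining dense after projectivization.
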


\begin{proof}
Let  $G=O(V)$, and $H\subset G$ be the stabilizer of
a vector $\eta\in \6\CC$. Then 
$\6\CC= G/H$. The action of $\Gamma$ on $G/H$ is
ergodic by Moore's theorem 
(\cite{_Morris:Ratner_}). The orbit dichotomy
follows from the Orbit Closure Dichotomy theorem, 
\cite[Theorem 7.4.2]{_Filip_Tosatti:canonical_},
where it is deduced from
the Raghunatan conjecture's solution
by Dani, \cite[Theorem A]{_Dani:Horospherical_}.
\end{proof}

We are in position to prove Theorem \ref{_main3_Theorem_}.
Since $H^{1,1}(X)$
contains no MBM classes, the group of complex
automorphisms $\Aut(X)$ acts on $H^2(X)$ as a
subgroup $\Gamma \subset \rmO(\NS)$ of finite index in $\rmO(\NS)$
(\cite{AV2}).
The group $\Gamma$ acts on the boundary $\6\Amp$
with two kinds of orbits by the orbit dichotomy theorem
(\ref{_dichotomy_Lemma_}): every rational orbit is closed,
and every irrational orbit is dense. Since $\Gamma$ is a
lattice in $\rmO(1,n)$, it contains a hyperbolic element;
the corresponding dynamical currents are rigid, and 
the corresponding cohomology classes have diameter zero 
(Theorem \ref{_dyn_current_rigid_Theorem_}). If an irrational
current $\eta$ is not rigid, it has positive diameter;
by semicontinuity of diameter, the same is true
for all classes $\eta_1\in \6\Amp$
obtained as limit points of $\Gamma \cdot \eta$.
However, the set $\Gamma \cdot \eta$ is dense,
hence it contains dynamical currents, giving
a contradiction. This finishes the proof of Theorem 
\ref{_main3_Theorem_}.

\bigskip

{\small

}

\noindent {\sc Andrey Soldatenkov\\
{\sc Instituto Nacional de Matem\'atica Pura e
              Aplicada (IMPA) \\ Estrada Dona Castorina, 110\\
Jardim Bot\^anico, CEP 22460-320\\
Rio de Janeiro, RJ - Brasil}\\
{\tt aosoldatenkov@gmail.com}}

\hfill

\noindent {\sc Misha Verbitsky\\
{\sc Instituto Nacional de Matem\'atica Pura e
              Aplicada (IMPA) \\ Estrada Dona Castorina, 110\\
Jardim Bot\^anico, CEP 22460-320\\
Rio de Janeiro, RJ - Brasil\\
\tt  verbit@impa.br }}

\end{document}